\def\eps{{\epsilon}}
\def\C{{\mathbb C}}
\def\N{{\mathbb N}}
\def\Q{{\mathbb Q}}
\def\implies{\Rightarrow}
\def\to{\rightarrow}
\def\ot{\leftarrow}
\def\otsym{\overset{\leftarrow}{+}}
\def\<{\langle}
\def\>{\rangle}
\theoremstyle{definition}
\newtheorem{definition}{Definition}[section]
\newtheorem{theorem}{Theorem}[section]
\newtheorem{corollary}{corollary}[section]
\newtheorem*{theorem*}{Theorem}
\newtheorem{lemma}[theorem]{Lemma}
\theoremstyle{remark}
\newtheorem{remark}{Remark}
\newcounter{kd}
\newcommand{\Rep}{\ensuremath{\mathsf{Rep}}}
\newcommand{\stab}{\ensuremath{\mathsf{stab}}}
\newcommand{\DRep}{\ensuremath{\mathsf{DRep}}}
\newcommand{\iss}{\ensuremath{\mathsf{iss}}}
\newcommand{\Diss}{\ensuremath{\mathsf{Diss}}}
\newcommand{\id}{\ensuremath{\mathsf{id}}}
\newcommand{\GL}{\ensuremath{\mathsf{GL}}}
\newcommand{\DGL}{\ensuremath{\mathsf{DGL}}}
\newcommand{\Sp}{\ensuremath{\mathsf{Sp}}}
\newcommand{\Mat}{\ensuremath{\mathsf{Mat}}}
\newcommand{\Stab}{\ensuremath{\mathsf{Stab}}}
\newcommand{\se}[1]{\begin{equation*}\begin{split}#1\end{split}\end{equation*}}
\newcommand {\vtx}[1]{*+[o][F-]{\scriptstyle{#1}}}
\newcommand {\svtx}[1]{*+[o][F.]{\scriptstyle{#1}}}
\newcommand {\Tr}{\mathsf {Tr}}
\author{Raf Bocklandt}
\address{Universiteit Antwerpen (UIA) \\ B-2610 Antwerp (Belgium)}
\email{rafael.bocklandt@ua.ac.be}
\title{A slice theorem for quivers with an involution}
\begin{document}

\maketitle

\begin{abstract}
We study the Luna slice theorem in the case of quivers with an involution
or supermixed quivers as introduced by Zubkov in \cite{Zub}.
We construct an analogue to the notion of a local quiver setting described in \cite{LBP}. 
We use this technique to determine dimension vectors of simple supermixed representations.
\end{abstract}

\section{Introduction}
Given a reductive algebraic group $G$
and a $G$-representation $V$ we can construct the algebraic quotient $V/\!\!/G$, which is
the affine variety corresponding to the ring of invariant polynomial functions $\C[V]^G$.
The embedding $\C[V]^G \subset \C[V]$ gives rise to a quotient map $V \to V/\!\!/G$.

The main problem in invariant theory is to describe the geometry of such a quotients.
There are several questions that one can try to answer: What is the dimension of $V/\!\!/G$?
How do the fibers of $V/\!\!/G$ look like? Is $V/\!\!/G$ a smooth variety?

In complete generality a solution for these problems is unattainable but
given restrictions on the groups or the representations one can expect some interesting partial 
results. In many cases one can find a certain class of couples $(V,G)$ which share the same
geometrical properties for their quotients. More precisely one can try to find classes that are closed under local behavior.
By this we mean that if we have a couple $(V,G)$ and a point $p \in V/\!\!/G$ we can find another couple $(V_p,G_p)$
of the same class such that there is an \'etale neighborhood of $p$ that is locally isomorphic to an \'etale neighborhood 
of the zero point in $V_p/\!\!/G_p$. Such a result simplifies the questions a lot because we can use this local result to 
reduce the questions about complicated representations to more simple representations.

For $(G,V)$ a representation space of a quiver this was done by Procesi and Le Bruyn in \cite{LBP}.
Similar results have been obtained for representation spaces of preprojective algebras by Crawley-Boevey \cite{Boevey}.

In this paper we will study the case of supermixed quivers. These were introduced and studied by Zubkov and Lopatim in \cite{Zub},\cite{Zub2},\cite{Zublopat} and are closely related to generalized quivers which were studied by Derksen and Weyman in \cite{DW}. First we will give a
coordinate free description of a representation space of a supermixed quiver by means of involutions on semisimple algebras.
Then we will extend the results by Derksen and Weyman to obtain a representation  theoretic interpretation of the points in the representation spaces and in the quotient. This will enable us to formulate an extension of the result on local quivers
by Procesi and Le Bruyn  to supermixed quivers. To make full use of this result we will also determine which supermixed settings
have simple representations.

In the rest of this paper all our varieties and algebra will be considered over $\C$ but the results apply to every algebraicly closed
field of characteristic zero.

\section{A quick review of quiver settings}

We briefly recall that 
a quiver $Q$ consists of a set of vertices $Q_0$, a set of arrows $Q_1$ and two maps $h,t:Q_1 \to Q_0$ 
which assign to each arrows its source and its tail.
It is easy to see that that a quiver is uniquely defined by its Euler form.

A dimension vector of a quiver is a map $\alpha: Q_0\to \N$. We will call a couple $(Q,\alpha)$ a quiver setting.

To every quiver setting we can associate a semisimple algebra, its standard left module
\[
 S_\alpha := \bigoplus_{v \in Q_0}\Mat_{\alpha_v\times \alpha_v}(\C),~\C^\alpha:= \bigoplus_{v \in Q_0}\C^{\alpha_v}
\]
and an $S_\alpha$-bimodule
\[
 \Rep(Q,\alpha) =  \bigoplus_{a \in Q_1}\Mat_{\alpha_{h(a)}\times \alpha_{t(a)}}(\C).
\]
Vice versa if $S$ is a semisimple algebra and $M$ an $S$-bimodule we can find a quiver setting $(Q,\alpha)$ 
unique up to isomorphism such that $(S,M) \cong (S_\alpha,\Rep(Q,\alpha))$.
The vertices in $Q$ correspond to the maximal set of orthogonal idempotents $\{v_1,\dots v_k\}$ in the center of $S$. The dimension vector is $\alpha_{v_i}= \sqrt{\dim v_iS}$ while the number of arrows from $v_j$ to $v_i$ is $\frac{\dim v_iMv_j}{\alpha_{v_i}\alpha_{v_j}}$.

We define the group $\GL_{\alpha}$ as the group of invertible elements in $S_\alpha$. This group
has an action on $\Rep(Q,\alpha)$ by conjugation: $m \mapsto gmg^{-1}$. This action has a categorical 
quotient:
\[
 \iss(Q,\alpha) = \Rep(Q,\alpha)/\!\!/\GL_{\alpha}.
\]
In algebraic terms we construct the quotient as follows: let $\C[\Rep(Q,\alpha)]$ be the ring of 
polynomial functions over $\Rep(Q,\alpha)$. On this ring we have an action of $\GL_{\alpha}$ coming 
from conjugation action on $\Rep(Q,\alpha)$. The subring of functions that are invariant under this action
is the ring of polynomial function over the categorical quotient:
\[
 \C[\iss(Q,\alpha)] = \C[\Rep(Q,\alpha)]^{\GL_{\alpha}}.
\]
A path of length $k$ in a quiver is a sequence of arrows $p=a_1\dots a_k$ with $t(a_i)=h(a_{i+1})$. We denote
its head and tail as $h(p)=h(a_1)$ and $t(p)=t(a_k)$. A path the tail of which equals its head is called
a cycle. A vertex is also called a path of length zero.
A cycle is called primitive if the heads of all its arrows are different i.e. it runs through every vertex at most once.
The path algebra $\C Q$ is the vector space with as basis all paths and its multiplication is the concatenation of paths if possible and zero if not. The path algebra is graded by the length of the paths and it is  Morita equivalent to the tensor algebra $T_S M=S \oplus M  \oplus M \otimes_S M \oplus\cdots$. 

The degree zero part of $\C Q$ can be embedded into $S$ as the center: $\C Q_0=Z(S)$. As such we can see $\C^\alpha$ as a left $\C Q_0$-module.
Any point $W \in M=\Rep(Q,\alpha)$ can be identified with a left module of $\C Q$ such that the restriction to $\C Q_0$ is $\C^\alpha$. Every arrow $a$ will act as $W_a$ between $\C^{\alpha_{t(a)}}$ and $\C^{\alpha_{h(a)}}$ and as zero between the rest. Therefore the points of $\Rep(Q,\alpha)$ will be called representations of $Q$ with dimension vector $\alpha$. 

On the other hand if $W$ is a left $\C Q$-module we can find bases for every space $vW, v \in Q_0$. If we express the action of arrows $a:t(a)W \to h(a)W$ as matrices according to these bases we get a representation of $Q$ with dimension vector $\alpha_W: v \mapsto \dim vW$. 

In this way we can speak of simple, semisimple and
indecomposable representations of $Q$. Two points $W$ and $W'$ will give isomorphic representations if and only if they are in the same $\GL_{\alpha}$-orbit.

If $W$ is a representation we can evaluate the path $p$: $W_p=W_{a_1}\dots W_{a_k}$.
To every cycle $c$ we can associate the map $f_c :\Rep(Q,\alpha) \to \C: W \mapsto \Tr  W_c$.
This map is invariant under the $\GL_{\alpha}$-action and in general every invariant map can be 
written in terms of these:
\begin{theorem}[Le Bruyn-Procesi]
 The ring $\C[\iss(Q,\alpha)]$ is generated by functions of the form $f_c$ where $c$ is a cycle in $Q$.
\end{theorem}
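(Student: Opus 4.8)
The plan is to prove this classical result (due to Le Bruyn–Procesi, often attributed also to Razmyslov, Procesi, etc.) by combining two ingredients: first, a reduction from the $\GL_\alpha$-action to the action of a single general linear group via the first fundamental theorem of invariant theory for $\GL_n$; and second, a combinatorial bookkeeping argument that translates ``traces of products of matrices'' back into ``traces along cycles of the quiver''. Concretely, I would first observe that $\GL_\alpha = \prod_{v\in Q_0}\GL_{\alpha_v}(\C)$ acts on $\Rep(Q,\alpha) = \bigoplus_{a\in Q_1}\Mat_{\alpha_{h(a)}\times\alpha_{t(a)}}(\C)$, and that this is a direct sum of the standard representations $\C^{\alpha_{h(a)}}\otimes(\C^{\alpha_{t(a)}})^*$ of the relevant factors. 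Embedding $\GL_\alpha$ diagonally in $\GL_N$ with $N=\sum_v\alpha_v$, the representation $\Rep(Q,\alpha)$ becomes a subspace of $\Mat_{N\times N}(\C)=\Mat_N$ cut out by the idempotents $e_v$ projecting onto the block $\C^{\alpha_v}$: an arrow $a$ lives in $e_{h(a)}\Mat_N e_{t(a)}$.

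**Next I would invoke the first fundamental theorem** for the conjugation action of $\GL_N$ on tuples of $N\times N$ matrices: the ring of invariants $\C[\Mat_N^{\oplus m}]^{\GL_N}$ is generated by traces $\Tr(X_{i_1}X_{i_2}\cdots X_{i_k})$ of words in the matrix variables. To get from $\GL_N$-invariants to $\GL_\alpha$-invariants one uses that $\C[\Rep(Q,\alpha)]^{\GL_\alpha}$ is the image, under restriction, of $\C[\Mat_N^{\oplus(|Q_1|+|Q_0|)}]^{\GL_N}$ — here one throws the idempotents $e_v$ into the list of matrix variables as extra ``constants'', or equivalently works with the subgroup $\GL_\alpha$ as the stabilizer of the idempotents and applies the standard fact that invariants of a subgroup defined by fixing semisimple elements are generated by restricting invariants of the big group together with those elements. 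Then a trace of a word in the $X_a$'s and the $e_v$'s is nonzero only when the word is ``composable'' as a path: $e_v X_a = X_a$ iff $v=h(a)$ and $X_a e_v = X_a$ iff $v=t(a)$, so after deleting redundant idempotents a nonzero such trace is exactly $\Tr(W_{a_1}W_{a_2}\cdots W_{a_k})$ for a cycle $c=a_1a_2\cdots a_k$ in $Q$, i.e. $f_c$.

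**The main obstacle** I anticipate is making the reduction ``from $\GL_N$-invariants to $\GL_\alpha$-invariants'' fully rigorous: one must justify that every $\GL_\alpha$-invariant function on $\Rep(Q,\alpha)$ extends to (the restriction of) a $\GL_N$-invariant function on the ambient matrix tuples. The clean way is a reductivity/averaging argument: $\GL_\alpha$ is reductive, so the restriction map on invariants is surjective onto $\C[\Rep(Q,\alpha)]^{\GL_\alpha}$ provided $\Rep(Q,\alpha)$ is a $\GL_\alpha$-stable linear subspace that is a direct summand of $\Mat_N^{\oplus?}$ as a $\GL_\alpha$-module — which it is, via the idempotent projections — and provided $\GL_\alpha$-invariants of the ambient space are themselves generated in the expected way, which is handled by treating the $e_v$ as additional semisimple matrices fixed by $\GL_\alpha$ and citing the Procesi trace-ring description. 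Alternatively, and perhaps more transparently, I would cite the statement directly in the matrix-invariant form already present in the literature (Procesi, Le Bruyn), phrasing the quiver version as a notational repackaging.

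**A final bookkeeping step** closes the argument: a word $W_{a_1}\cdots W_{a_k}$ (with idempotents absorbed) need not be a \emph{primitive} cycle, but any cycle is a path with $t(c)=h(c)$, so the generating set as stated — all cycles, not just primitive ones — is exactly what the trace words produce, and one notes $f_c$ depends only on the cyclic word (cyclic invariance of trace), so it suffices to range over cycles up to rotation. This yields precisely the assertion that $\C[\iss(Q,\alpha)]$ is generated by the $f_c$ with $c$ a cycle in $Q$. \eop
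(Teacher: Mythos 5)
Your sketch is correct and is essentially the standard Le Bruyn--Procesi argument: the paper itself states this theorem without proof, quoting it from \cite{LBP}, and your reduction (block-embed $\GL_\alpha\subset\GL_N$, invoke the Procesi--Razmyslov first fundamental theorem for matrix invariants under simultaneous conjugation, pass from $\GL_N$- to $\GL_\alpha$-invariants by adjoining the idempotents $e_v$ --- whose conjugation orbit is closed with stabilizer $\GL_\alpha$, so restriction of invariants is surjective by reductivity --- and then observe that a nonvanishing trace word forces the arrows to concatenate into a cycle) is exactly the proof in that reference. The only step needing care, the surjectivity of restriction onto $\C[\Rep(Q,\alpha)]^{\GL_\alpha}$ via the closed orbit of the semisimple tuple $(e_v)$ and the $\GL_\alpha$-equivariant projection $X_a\mapsto e_{h(a)}X_ae_{t(a)}$, is one you correctly identified and handled, so there is no gap beyond routine write-up.
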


The representation-theoretical interpretation of the quotient can be summarized as follows.
\begin{theorem}
 The points in $\iss(Q,\alpha)$ are in one to one correspondence to isomorphism classes
 of the semisimple representations of $Q$ in $\Rep(Q,\alpha)$. Two points in $\Rep(Q,\alpha)$
are mapped to the same point in $\iss(Q,\alpha)$ if and only if they have the same semisimplification.
\end{theorem}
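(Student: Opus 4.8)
The plan is to combine the general theory of categorical quotients by reductive groups (geometric invariant theory) with the fact already recorded in the excerpt that $\GL_\alpha$-orbits in $\Rep(Q,\alpha)$ correspond to isomorphism classes of representations. First I would recall that since $\GL_\alpha$ is reductive and acts on the affine variety $\Rep(Q,\alpha)$, the quotient map $\pi:\Rep(Q,\alpha)\to\iss(Q,\alpha)$ is surjective and separates closed orbits: every fiber of $\pi$ contains a unique closed orbit, and two points have the same image under $\pi$ exactly when the closures of their orbits intersect, equivalently when they contain the same closed orbit in their closures. Thus the points of $\iss(Q,\alpha)$ are in bijection with the closed $\GL_\alpha$-orbits in $\Rep(Q,\alpha)$.

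The key step is then to identify the closed orbits representation-theoretically as precisely the orbits of semisimple representations. For the direction that semisimple representations have closed orbits, I would use the standard degeneration argument: any one-parameter subgroup $\lambda$ of $\GL_\alpha$ for which $\lim_{t\to 0}\lambda(t)\cdot W$ exists induces a filtration of $W$ whose associated graded is the limit, so a representation with a nonclosed orbit admits a proper subrepresentation and a degeneration to the direct sum, hence is not semisimple; contrapositively, semisimple $\Rightarrow$ closed orbit. Conversely, I would show that if $W$ is not semisimple, pick a subrepresentation $0\subsetneq W'\subsetneq W$, choose a complementary subspace of the underlying vector spaces compatible with the idempotents $v\in Q_0$, and let $\lambda$ be the one-parameter subgroup acting as $1$ on $W'$ and as $t$ on the complement; then $\lim_{t\to0}\lambda(t)\cdot W = W'\oplus W/W'$ lies in the orbit closure but (by induction on dimension) is not in the orbit of $W$, so the orbit of $W$ is not closed. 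Iterating, the unique closed orbit in $\overline{\GL_\alpha\cdot W}$ is the orbit of the semisimplification $W^{ss}=\bigoplus_i S_i^{\oplus m_i}$, which is independent of the chosen composition series by Jordan--Hölder.

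Putting these together: the closed orbits are exactly the orbits of semisimple representations, so $\iss(Q,\alpha)$ is in bijection with isomorphism classes of semisimple representations of $Q$ of dimension vector $\alpha$; and $\pi(W)=\pi(W')$ iff $\overline{\GL_\alpha\cdot W}$ and $\overline{\GL_\alpha\cdot W'}$ share their (unique) closed orbit, i.e. iff $W^{ss}\cong W'^{ss}$. This gives both assertions of the theorem.

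The main obstacle I anticipate is making the orbit-closure/one-parameter-subgroup bookkeeping fully precise in the quiver setting: one must check that the limit of $\lambda(t)\cdot W$ genuinely is the direct sum $W'\oplus W/W'$ with its natural quiver-representation structure (a blockwise computation on each matrix $W_a$), and that the chosen $\lambda$ actually lies in $\GL_\alpha$, i.e. respects the vertex decomposition $\C^\alpha=\bigoplus_v\C^{\alpha_v}$. The Jordan--Hölder step, ensuring the semisimplification is well defined up to isomorphism, is classical and can be invoked directly. Everything else is a routine appeal to the standard properties of good quotients by reductive groups, which may also simply be cited.
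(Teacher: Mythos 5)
The paper does not actually prove this statement: it is recalled in the review section as a classical result (it is the quiver version of Artin's theorem on closed orbits, as used by Le Bruyn--Procesi), so there is no ``paper proof'' to compare against line by line. Your argument is the standard one that underlies the cited result, and it is essentially correct: identify points of $\iss(Q,\alpha)$ with closed $\GL_\alpha$-orbits via the general theory of quotients by reductive groups, then show that the closed orbits are exactly the orbits of semisimple representations, with the unique closed orbit in $\overline{\GL_\alpha\cdot W}$ being the orbit of $W^{ss}$ (well defined by Jordan--H\"older).

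Two small points deserve to be made explicit. First, in the direction ``semisimple $\Rightarrow$ closed orbit'' you argue contrapositively from ``the orbit is not closed'' to ``there is a one-parameter subgroup whose limit leaves the orbit''; this step is not formal, it is the Hilbert--Mumford/Birkes theorem (for a reductive group acting on an affine variety, the closed orbit in an orbit closure is reached by a one-parameter subgroup), so it should be cited rather than treated as part of the ``routine'' GIT package. Given that, the filtration argument is fine: the weight filtration of a one-parameter subgroup whose limit exists is a filtration by subrepresentations, its associated graded is the limit, and for semisimple $W$ the associated graded of any such filtration is isomorphic to $W$, so no limit leaves the orbit. Second, in the converse direction the phrase ``by induction on dimension'' is not quite the right justification that $W'\oplus W/W'$ lies outside the orbit of $W$; the clean statement is either to choose $W'$ so that the extension is non-split (e.g.\ the socle of a non-semisimple $W$), or, as you then do, to iterate the degeneration down to $W^{ss}$ and note $W^{ss}\not\cong W$, which already shows the orbit is not closed. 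With these two clarifications your proposal is a complete and standard proof of the recalled theorem.
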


\section{Dualizing structures and supermixed settings}

Let $S$ be a finite dimensional semisimple algebra  and let $M$ be an $S$-bimodule.
\begin{definition}
A \emph{dualizing structure} on $(S,M)$ consists of two linear involutions $*: S\to S$ and
$*:M \to M$ and a bilinear form $\<,\>:\C^\alpha \times \C^\alpha\to \C$, satisfying the following compatibility relations:
\begin{itemize}
 \item $\forall a,b \in S_{\alpha}: (ab)^*=b^*a^*$.
 \item $\forall a,b \in S_{\alpha}:\forall m\in M:(amb)^*=b^*m^*a^*$.
\end{itemize}
The \emph{dualizing group} of $S$ is the group of elements for which the inverse and the involution
coincide:
\[
D(S) = \{ g \in S : g g^*=1\}
\]
while the \emph{dualizing subspace} of $M$ is the subspace
\[
D(M) = \{v\in M:v^*=v\} 
\]
The group $D(S)$ has an action on $D(M)$ by conjugation: $v \mapsto gvg^*$ because
$(gvg^*)^* =gvg^*$. 
\end{definition}

We can turn this data into the language of quivers. 
First we are going to impose the dualizing structure on a quiver setting.
An involution $*$ on $S_{\alpha}$ restricts to an involution of the center
which is a ring-automorphism. This maps idempotents to idempotents so we get an involution $\phi$ on the set of vertices $\{v_i\}$.

Given $\phi$ we can construct a standard involution on $S_\alpha$:
\[
 s \mapsto s^\dagger\text{ with }(s^\dagger)_v = s_\phi(v)^\top 
\]
where the transpose is taken according to the identification $S_\alpha = \oplus_{v \in Q_0}\Mat_{\alpha_v\times \alpha_v}(\C)$.
The composition of $*$ and $\dagger$ gives us an automorphism of $S_\alpha$ which is internal
because all idempotents are fixed. Therefore we can say that there exists a $g \in S_\alpha$ such that
\[
(s^*)_v = g_{v} s_{\phi(v)}^\top  g_{v}^{-1}
\]
The fact that $*^2 = \id$ implies that
\[
 ((s^*)^*)_v = g_{v} g_{\phi(v)}^{-\top}s_{v} g_{\phi(v)}^{\top} g_{\phi(v)}^{-1}=s_v
\]
so $g_v=\eps_v g_{\phi(v)}^\top $ for some scalars $\eps_v$ and $\eps_v\eps_{\phi(v)}=1$.
Because we only use the $g_v$ for conjugation they are determined up to a scalar themselves.
This allows us to chose $\eps_v=\pm 1$ and hence we also have that $\eps_v=\eps_{\phi(v)}$.

Now we will tackle the dualizing structures on $M=\Rep(Q,\alpha)$.
We suppose that the involution on $S_\alpha$ is in the form above and that $\phi$ is the corresponding involution
on the vertices $Q_0$.
Choose a second involution $\phi: Q_1\to Q_1$ such that $h(\phi(a))=\phi(t(a))$ and if $h(a)=\phi(t(a))$ we demand
that $\phi(a)=a$.
Given this we can construct a 'standard' dualizing structure on $\Rep(Q,\alpha)$.
\[
 m \mapsto m^\dagger\text{ with }(m^\dagger)_a = \eps_{t(a)} g_{h(a)} m_{\phi(a)}^\top g_{t(a)}^{-1}.
\]

As in the case of $S_\alpha$, we can compose the involution $*:\Rep(Q,\alpha)\to \Rep(Q,\alpha)$ we want to study with
this standard involution to obtain an automorphism $*\dagger$ of $\Rep(Q,\alpha)$ as an
$S_\alpha$-module. By Schur's lemma there are coefficients $\sigma_{ab},~a,b \in Q_1$ such that
$(m^{*\dagger})_a= \sum_{a \in Q_1} \sigma_{ab}m_b$. Also note that $\sigma_{ab}$ is only nonzero if
$h(a)=h(b)$ and $t(a)=t(b)$.

By taking linear combinations of the arrows, we can diagonalize $\sigma$ and this gives the following formula for $*$:
\[
(m^*)_a = \sigma_a \eps_{t(a)} g_{h(a)} m_{\phi(a)}^\top g_{t(a)}^{-1}
\]
The fact that $*$ is an involution implies that $\sigma_a\sigma_{\phi(a)}=1$ and rescaling ensures us that we can suppose $\sigma_a = \pm 1$.

All this rewriting can be summarized in the terminology of supermixed quivers.

\begin{definition}
 A supermixed quiver $\Q$ consists of a quiver $Q$, two involutions $\phi$ (one on the vertices and one on the arrows) and two sign maps $\eps:Q_0 \to \{\pm 1\}$ and $\sigma: Q_1 \to \{\pm 1\}$.
which satisfy:
\begin{itemize}
 \item[M1] $h(\phi(a))=\phi(t(a))$,
 \item[M2] $h(a)=h(\phi(a)) \implies a=\phi(a)$,
 \item[M3] $\eps_v\eps_{\phi(v)}=\sigma_a\sigma_{\phi(a)}=1$.
\end{itemize}
We say that $(\Q,\gamma)$ is a supermixed setting if $\gamma: Q_0 \to \sqcup_{n \in \N} \GL_n(\C)$ is a map that assigns
to each vertex a matrix such that $g_v=\eps_vg_{\phi(v)}^\top$. To $\gamma$ we can naturally associate a dimension vector which assigns
to every vertex $v$ the dimension of $\gamma_v$. We will denote this dimension vector by $\bar\gamma$, but we will leave the bar in cases
where the dimension vector is just used as an index f.i. we will write $S_{\gamma}$ instead of $S_{\bar \gamma}$.

Every supermixed quiver setting defines involutions $*$ on the algebra $S_\gamma$ and on the space $\Rep(\Q,\gamma)$:
\se{
(s^*)_v &= \gamma_{\phi(v)}s_{\phi(v)}^\top \gamma_{\phi(v)}\\
(m^*)_a &= \sigma_a \eps_{h(a)} \gamma_{t(a)}^\top m_{\phi(a)}^\top \gamma_{h(a)}
}
The couple $(S_{\gamma},\Rep(\Q,\gamma))$ with the involutions above is called the dualizing structure coming from the supermixed quiver setting.

A vertex with $v=\phi(v)$ is called \emph{orthogonal} if $\eps_v=1$ and \emph{symplectic} if $\eps_v=-1$,
a vertex for which $v\ne \phi(v)$ is called \emph{general}.

An arrow with $a=\phi(a)$ is called \emph{symmetric} if $\sigma_a=1$ and \emph{antisymmetric} if $\sigma_a=-1$, an arrow
for which $a\ne \phi(a)$ is called \emph{general}.
\end{definition}
The terminology introduced above is an adaptation from the one introduced by Zubkov in \cite{Zub}. Instead of partitioning the vertices
into sets of size one and two and looking at the double quiver we use involutions. Also we use the $\gamma$ for the identification between
the vector space and their duals instead of the more explicit approach in \cite{Zub}. The translation between the two notations is
straightforward, but our adaptation makes it easier to describe the results we obtained concerning local quiver settings.

\begin{theorem}
Every couple $(S,M)$ with a dualizing structure is isomorphic to a dualizing structure coming from
a normalized supermixed quiver setting. 
\end{theorem}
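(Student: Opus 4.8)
The plan is to transport everything into the standard coordinates of Section~2 and then to read the construction in the paragraphs preceding the statement as an explicit recipe that, applied to $(S,M)$, produces the required supermixed setting; the theorem is then the assertion that this recipe is inverse (up to isomorphism) to the passage from a supermixed setting to its dualizing structure. First I would invoke the equivalence recalled in Section~2 to fix a quiver setting $(Q,\alpha)$, unique up to isomorphism, together with an identification $(S,M)\cong(S_\alpha,\Rep(Q,\alpha))$; transporting $*$ and $\langle,\rangle$ along it there is no loss in taking $S=S_\alpha$ and $M=\Rep(Q,\alpha)$. Restricting $*$ to the centre $Z(S_\alpha)=\C Q_0$ gives a ring automorphism permuting the primitive idempotents, hence the vertex involution $\phi$; it follows that $*$ carries the block $wMv$ onto the block $\phi(v)M\phi(w)$, the combinatorial backbone of the rest.

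Next I would run the algebra step and the module step exactly as above. For $S_\alpha$: composing $*$ with the standard involution $\dagger$ attached to $\phi$ gives an automorphism fixing every idempotent, hence inner, so $(s^*)_v=g_vs_{\phi(v)}^\top g_v^{-1}$; imposing $*^2=\id$ forces $g_v=\eps_vg_{\phi(v)}^\top$ with $\eps_v\eps_{\phi(v)}=1$, and since the $g_v$ act only by conjugation I rescale to $\eps_v=\pm1$ (so $\eps_v=\eps_{\phi(v)}$) and, over $\C$, bring each $g_v$ into normal form — the identity when $\eps_v=1$, an involutive antisymmetric matrix when $v=\phi(v)$, $\eps_v=-1$; ``normalised'' in the statement refers to this last choice (it is exactly what makes the two formulae for $s^*$ in the Definition of a supermixed setting agree). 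Put $\gamma_v:=g_v$. For $M$: since $*$ identifies $wMv$ with $\phi(v)M\phi(w)$, the spaces of parallel arrows $v\to w$ and $\phi(w)\to\phi(v)$ have equal dimension, so one can select an involution $\phi:Q_1\to Q_1$ matching these pairs, which is M1. On a block that $*$ fixes — i.e.\ arrows with $h(a)=\phi(t(a))$ — the map $*\dagger$ is an $S_\alpha$-linear involution of that block, so after diagonalising it I may demand $\phi(a)=a$ there (this is M2), recording the eigenvalue as $\sigma_a=\pm1$; on the remaining blocks Schur's lemma presents $*\dagger$ by a matrix $\sigma_{ab}$ supported on parallel arrows, which I diagonalise and rescale as in the computation before the theorem to get $\sigma_a=\pm1$ with $\sigma_a\sigma_{\phi(a)}=1$, i.e.\ M3. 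The data $(Q,\phi,\eps,\sigma,\gamma)$ is then a normalised supermixed quiver setting, its induced involutions on $S_\gamma$ and $\Rep(\Q,\gamma)$ being the ones we started from (a routine check, using $\gamma_v=\eps_v\gamma_{\phi(v)}^\top$ to pass between the two presentations), and $\langle,\rangle$ is the form whose Gram matrices on the components $\C^{\alpha_v}$ are the $\gamma_v$; hence the dualizing structures are isomorphic.

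The part requiring genuine care, rather than routine computation, is the simultaneity of the choices in the module step: one must pick $\phi$ on $Q_1$ as an honest involution satisfying \emph{both} M1 and M2, \emph{and} pick bases in which the Schur matrix $\sigma_{ab}$ is diagonal, \emph{and} in which each self-dual block splits into its symmetric and antisymmetric parts, while staying compatible with the rescaling that forces $\eps_v,\sigma_a\in\{\pm1\}$ and with the normal form already fixed for the $\gamma_v$. Concretely this reduces to verifying that all the auxiliary basis changes on the spaces $\C^{\alpha_v}$ — which act on $S_\alpha$ and on $\Rep(Q,\alpha)$ by conjugation — are isomorphisms of dualizing structures, so that the word ``isomorphic'' in the statement is legitimate; equivalently, that the whole classification collapses to congruence classes of the $g_v$ (trivial over $\C$: symmetric $\mapsto$ identity, antisymmetric $\mapsto$ involutive symplectic matrix) together with the arrow-pairing data, which is precisely the trichotomy orthogonal/symplectic/general for vertices and symmetric/antisymmetric/general for arrows.
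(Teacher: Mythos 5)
Your proposal is correct and follows essentially the same route as the paper: the paper's proof is literally ``this follows from the discussion above,'' and your argument is a faithful (and somewhat more careful) reconstruction of that discussion — reduction to $(S_\alpha,\Rep(Q,\alpha))$, extraction of $\phi$ from the centre, the inner-automorphism argument giving the $g_v$ and $\eps_v$, and the Schur-lemma diagonalisation giving $\phi$ on arrows and the $\sigma_a$. Your closing remark about checking that all auxiliary base changes are isomorphisms of dualizing structures is a point the paper leaves implicit but is handled by the subsequent theorem on redundancy of the data.
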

\begin{proof}
This follows from the discussion above.
\end{proof}

There is a redundancy in the definition of the supermixed settings. 
\begin{theorem}
Two supermixed settings $(\Q_1,\gamma_1)$ and $(\Q_2,\gamma_2)$ have isomorphic dualizing structures if
the underlying dimension vectors are the same and the sign maps have identical values on the fixed vertices and arrows.
\se{
\bar \gamma_1 &=\bar \gamma_2 \\
v =\phi(v) &\implies \eps_1(v)=\eps_2(v)\\
a =\phi(a) &\implies \sigma_1(a)=\sigma_2(a)
}
\end{theorem}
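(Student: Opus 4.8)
\begin{prf}
We read the statement as comparing two supermixed quivers on one and the same underlying quiver $Q$ with the same vertex- and arrow-involutions $\phi$ (this is forced if the clauses about fixed vertices and fixed arrows are to be meaningful), with equal dimension vector $\bar\gamma_1=\bar\gamma_2=:\alpha$, and with $\eps_1,\eps_2$ agreeing on the $\phi$-fixed vertices and $\sigma_1,\sigma_2$ on the $\phi$-fixed arrows. Both settings then make the \emph{same} pair $(S,M)=(S_\alpha,\Rep(Q,\alpha))$ into a dualizing structure; only the involutions $*_1,*_2$ on $S$ and on $M$ and the forms $\gamma_1,\gamma_2$ differ. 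An isomorphism of dualizing structures will be a couple $(\psi_S,\psi_M)$, with $\psi_S$ an automorphism of the algebra $S$ and $\psi_M$ an additive automorphism of $M$ forming a bimodule isomorphism over $\psi_S$, which carries $\gamma_1$ to $\gamma_2$ (up to the scalar ambiguity inherent in each $\gamma_v$) and intertwines $*_1$ with $*_2$ on both $S$ and $M$.

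The first step is to eliminate the sign maps. On a $\phi$-orbit $\{v,\phi(v)\}$ of vertices of size two, the defining relation $\gamma_{i,v}=\eps_i(v)\gamma_{i,\phi(v)}^{\top}$ together with the freedom to rescale $\gamma_{i,v}$ lets one arrange $\eps_i(v)=\eps_i(\phi(v))=1$; performing this on every such orbit I may assume $\eps_1=\eps_2=1$ on all general vertices, hence $\eps_1=\eps_2$ on all of $Q_0$ by the hypothesis. Likewise, rescaling the bimodule component of $M$ indexed by an arrow $a$ with $a\ne\phi(a)$ replaces both $\sigma_i(a)$ and $\sigma_i(\phi(a))$ by $1$ while leaving the other arrow-orbits untouched, so I may assume $\sigma_1=\sigma_2=1$ on all general arrows, hence $\sigma_1=\sigma_2$ on all of $Q_1$. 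After these harmless renormalizations $\Q_1$ and $\Q_2$ carry identical $\phi$, $\eps$ and $\sigma$, and differ only through $\gamma_1$ and $\gamma_2$.

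The second step builds $(\psi_S,\psi_M)$ from the $\gamma$'s. I look for it in base-change form, $\psi_S(s)_v=P_v^{-1}s_vP_v$ and $\psi_M(m)_a=P_{h(a)}^{-1}m_aP_{t(a)}$ with $P_v\in\GL_{\alpha_v}(\C)$ (note $\alpha_{\phi(v)}=\alpha_v$, forced by $\gamma_{i,v}=\eps_i(v)\gamma_{i,\phi(v)}^{\top}$); bimodule compatibility is then automatic. Substituting the explicit formulas for $*_i$ (of the shape displayed in the definition, with the appropriate transposes and inverses) and using the identities $h(\phi(a))=\phi(t(a))$ and $t(\phi(a))=\phi(h(a))$ coming from M1, a computation shows that, once $\eps$ and $\sigma$ coincide, all of the conditions --- $\psi_S$ intertwining the $*$'s on $S$, $\psi_M$ intertwining them on $M$, and $\psi$ carrying $\gamma_1$ onto $\gamma_2$ --- reduce to the single family of congruences
\[
\gamma_{1,v}=P_v\,\gamma_{2,v}\,P_{\phi(v)}^{\top}\qquad(v\in Q_0).
\]
This family is solvable: if $v=\phi(v)$ the equation reads $\gamma_{1,v}=P_v\gamma_{2,v}P_v^{\top}$, and since $\eps_1(v)=\eps_2(v)$ the matrices $\gamma_{1,v},\gamma_{2,v}$ are invertible of the same size and both symmetric (if $\eps_1(v)=1$) or both skew-symmetric (if $\eps_1(v)=-1$), hence congruent over $\C$, so $P_v$ exists; if $\{v,\phi(v)\}$ has size two, put $P_{\phi(v)}=I$ and $P_v=\gamma_{1,v}\gamma_{2,v}^{-1}$, whereupon the equation at $v$ holds by construction and the one at $\phi(v)$ follows from it by transposition, using $\gamma_{i,v}=\eps_i(v)\gamma_{i,\phi(v)}^{\top}$ with $\eps_1=\eps_2$. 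Collecting the $P_v$ yields the desired $(\psi_S,\psi_M)$.

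The step I expect to be the real work is the ``computation'' of the preceding paragraph: one has to feed the $M$-level involution formulas into $\psi_M\circ *_1=*_2\circ\psi_M$, keep the transposes and the M1 index identities straight, and check that the relations it produces at $h(a)$ and at $t(a)$ are exactly the congruences at those two vertices and impose no further constraint linking different $\phi$-orbits. Granting this, the construction above is a genuine isomorphism of dualizing structures.
\end{prf}
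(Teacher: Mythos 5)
Your proposal is correct and follows essentially the same route as the paper: both arguments rest on the transformation rule $\gamma_v\mapsto h_v\gamma_v h_{\phi(v)}^{\top}$ under base change, scalar rescalings to normalize the signs on non-fixed vertices and arrows, and the fact that two invertible symmetric (resp.\ skew-symmetric) matrices of the same size are congruent over $\C$. The only cosmetic difference is that the paper normalizes each setting independently to a canonical strict form, whereas you solve the congruence $\gamma_{1,v}=P_v\gamma_{2,v}P_{\phi(v)}^{\top}$ directly; these are the same argument.
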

\begin{proof}
If we apply a base change $h \in S_{\alpha}$ then $\gamma_v$ will transform as
\[
  (\gamma')_v= h_v \gamma_v h_{\phi(v)}^\top.
\]
This means that if $v=\phi(v)$ we can transform $\gamma_v$ to the identity matrix if $\eps_v=1$ and to
the standard symplectic matrix $\Lambda_{2n}= \left(\begin{smallmatrix}0&-1\\1&0\end{smallmatrix}\right)^{\oplus n}$ if $\eps_v=-1$.
If $v\ne \phi(v)$ we can change $\gamma_v$ to $\id$ (and hence $\gamma_{\phi(v)}$ to $\eps_v \id$).
So we see that the isomorphism class of the involution does not depend on the exact nature of $\gamma$ only on $\bar\gamma$ and $\eps$.

We can also multiply the $\gamma_v$ with scalars  
\[
  (\gamma')_v= \lambda_v \gamma_v 
\]
without changing the involution. This changes  $\eps_v$ into $\eps'_v = \eps_v \lambda_v/\lambda_{\phi(v)}$,
so only the values of $\eps$ on the fixed vertices are important. Keep in mind that $\sigma_a$ also changes to
$\sigma_a \lambda_{(h(\phi a))}/\lambda_{h(a)}$.

Finally we can alter the sign of $\sigma_a$ if $a\ne \phi(a)$ by a base change on the vertices. 
\end{proof}

We can change $\eps$,$\sigma$ and $\gamma$ without changing
the isomorphism class of $(S_{\gamma},\Rep(\Q,\gamma))$ such that
\begin{itemize}
 \item $v \ne \phi(v) \implies \eps_v=1$, 
 \item $a \ne \phi(a) \implies \sigma_a=1$, 
 \item $\gamma_v= \id_{\bar \gamma_v}$ if $\eps_v=1$ and $\gamma_v= \Lambda_{\bar \gamma_v}$ if $\eps_v=-1$.
\end{itemize}
If these conditions are met we speak of a \emph{strict} supermixed setting, one can prove
that there is a one to one correspondence between strict supermixed settings and isomorphism classes of dualizing structures.
In the next sections, however we will need a more flexible description of dualizing structures so we will
also allow non-strict settings.

The dualizing group of a supermixed quiver setting $(\Q,\gamma)$ is by definition the group
\[
 \{ h \in S_{\gamma}| \forall v \in Q_0: h_v \gamma_vh_{\phi(v)}^\top \gamma_{v}^{-1}=1 \}
\]
because up to isomorphism we can bring the supermixed setting in its strict form and we can conclude that
\[
 \DGL_{\gamma} := D(S_\gamma) \cong \prod_{\{v,\phi(v)\} \in Q_0} \begin{cases}
O_{\gamma_v} &v=\phi(v) \text{ and } \eps_v=1 \\
Sp_{\gamma_v} &v=\phi(v) \text{ and } \eps_v=-1\\
GL_{\gamma_v}&v\ne \phi(v).
\end{cases}
\]
This explains the terminology for the vertices.

We will depict supermixed quiver settings in the following way. We will consider the $*$ operation as a reflection around
the vertical axis. In this way it is obvious to identify the duals of the vertices and arrows. The arrows with $\sigma_a=-1$ and
the vertices with $\eps_v=-1$ will be drawn with dashed lines. If needed we will put the expression of $\gamma_v$ inside the vertex
but if we don't do this we suppose that $\gamma_v=\id$ if $v=\phi_v$ and $\eps_v=1$ or $v\ne \phi$ and $v$ is the leftmost vertex of the pair $(v,\phi_v)$. If $v=\phi(v)$ and $\eps_v=-1$ we suppose that $\Lambda_{\bar \gamma_v}$.

The two supermixed quiver settings below describe isomorphic dualizing structures, but only the left setting is strict.

\begin{tabular}{c|c}
$\xymatrix{
&&\\
&\svtx{2}\ar@{.>}@(ru,lu)\ar[dl]&\\
\vtx{1}\ar@{.>}[rr]&&\vtx{1}\ar[ul]
}$
&
$\xymatrix{
&&\\
&\svtx{2}\ar@{.>}@(ru,lu)\ar@{.>}[dl]&\\
\svtx{1}\ar@{.>}[rr]&&\svtx{1}\ar@{.>}[ul]
}$
\\
$S_{\gamma}=\Mat_{2\times 2} \times \C\times \C$&
$S_{\gamma}=\Mat_{2\times 2} \times \C\times \C$\\
$(g_1,g_2,g_3)^*=(\Lambda_2 g_1^\top \Lambda_2^\top, g_3,g_2)$&
$(g_1,g_2,g_3)^*=(\Lambda_2 g_1^\top \Lambda_2^\top, g_3,g_2)$\\
$\Rep(\Q,\gamma)= \Mat_{2\times 2}\times \Mat_{2\times 1} \times \Mat_{1\times 2}\times \C$&
$\Rep(\Q,\gamma)= \Mat_{2\times 2}\times \Mat_{2\times 1} \times \Mat_{1\times 2}\times \C$\\
$(a\dots d)^* = (\Lambda_2 a^\top \Lambda_2^\top, \Lambda_2 c^\top, -b^\top\Lambda_2^\top, -d)$&
$(a\dots d)^* = (\Lambda_2 a^\top \Lambda_2^\top, -\Lambda_2 c^\top, b^\top\Lambda_2^\top, -d)$
\end{tabular}

\section{$\eps$-mixed modules}

Given a quiver setting $(Q,\alpha)$ one can interpret the spaces $\Rep(Q,\alpha)$ and $\iss(Q,\alpha)$ as classifying the 
the $\C Q$-module structures that one can put on $\C^\alpha$. We will now try to do a similar thing for supermixed 
quiver settings. The discussion below matches the discussion 
of symmetric quivers in \cite{DW}.

Given a a supermixed quiver setting $(\Q,\gamma)$ we define a bilinear form on the space $\C^{\bar\gamma}:= \bigoplus_v \C^{\bar\gamma_v}$:
\[
\<x,y\> := \sum_v x_v \gamma_v y_{\phi(v)}^\top.
\]
If we identify $\eps$ with the element $\sum_v\eps_v v \in \C Q$ we see that this form has a special property: it is $\eps$-commuting
\[
\forall x,y \in \C^{\gamma}: \<x,y\> =\<y,\eps x\>.
\]
This bilinear form can be used to define an adjoint: if $\varphi: \C^\gamma \to \C^\gamma$ we set
\[
\forall x,y \in \C^\alpha: \<\varphi x,y\> = \<x,\varphi^\natural y\>
\]
Every $W \in \Rep(\Q,\gamma)$ gives us a left $\C Q$-module structure $\rho_W:\C Q \to \mathsf{End}(\C^{\bar \gamma})$ on $\C^{\bar \gamma}$.
The adjoint allows us to express the action on the dual module: $\rho_W^\natural$. This is a right module so we cannot
see this dual module as a point in $\Rep(\Q,\gamma)$.

To do this we put an anti-automorphism $*$ on the path algebra 
\[
*:\C Q \to \C Q: \begin{cases}
                  v \to \phi(v)\\
		  a \to \sigma_a \eps_{h(a)}\phi(a)
                 \end{cases}
\text{ and }\forall x,y \in \C Q:(xy)^*=y^*x^*.
\]
This anti-automorphism is not an involution but it satisfies
\[
 \forall x \in \C Q: x^{**} =\eps x \eps
\]

This anti-automorphism allows us to turn left $\C Q$-modules into right $\C Q$-module: $\rho_*(u) := \rho(u^*)$.
Therefore, given the left module $W \in \Rep(\Q,\gamma)$, we can consider the left module $W^* \in \Rep(\Q,\gamma)$ corresponding to $(\rho_W^\natural)_*$. This gives us an involution on $\Rep(\Q,\gamma)$ and it is easy to check that this is in fact 
the same as the involution coming from the original dualizing structure. 

All this warrants the following definition:
an \emph{$\eps$-mixed module} is a left $\C Q$-module
$V$ together with a nondegenerate bilinear map $\<,\>: V \times V \to \C$ satisfying
\begin{itemize}
 \item $\forall v,w \in V:\<v,w\>=\<w,\eps v\>$ ($\eps$-commuting),
 \item $\forall v,w \in V:\forall x \in \C Q: \<xv,w\>=\<v,x^*w\>$ ($*$-compatible).
\end{itemize}
If the $*$-compatibility only holds for $\C Q_0$ instead of the whole $\C Q$ we will speak of \emph{almost $\eps$-mixed modules}.

For every (almost) $\eps$-mixed $\C Q$-module we can choose bases $(b_i^v)$ in every $vW$
and use these to construct a supermixed quiver setting $(\Q,\gamma)$ with
\[
(\gamma_v)_{ij}=\<b_i^v,b_j^v\>
\]
The $\eps$-commutativity of $\<,\>$ ensures that whatever bases we take we will end up with a
supermixed quiver setting that gives the same dualizing structure.

According to these bases we can express every $a$ as a matrix $W_a$. 
If $W$ is $\eps$-mixed then $(W_a)$ is in fact an element of $\DRep(\Q,\gamma)$. 
Vice versa if $W \in \DRep(\Q,\gamma)$ we can build an $\eps$-mixed module out of it in the usual way.

Furthermore two (almost) $\eps$-mixed modules are
called $\eps$-isomorphic if there is a module morphism between them that preserves the bilinear form.
The group of these isomorphisms is $\DGL_\gamma$ and
it is easy to check that $V,W \in \DRep(\Q,\gamma)$ correspond to $\eps$-isomorphic modules if and only
if they are in the same orbit under $\DGL_\gamma$. We can conclude:
\begin{theorem}
The $\DGL_{\gamma}$-orbits in $\DRep(\Q,\gamma)$ classify the $\eps$-mixed modules structures on $\C^\gamma,~\<,\>$ up to $\eps$-isomorphism.
\end{theorem}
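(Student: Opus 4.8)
The plan is to produce an explicit bijection between $\DRep(\Q,\gamma)$ and the set of $\eps$-mixed $\C Q$-module structures on the fixed pair $(\C^\gamma,\<,\>)$, to verify that this bijection intertwines the conjugation action of $\DGL_\gamma$ on the former with the action of $\DGL_\gamma$ by $\eps$-isomorphisms on the latter, and then to pass to orbits on both sides. Essentially everything needed has already been assembled in the preceding pages; the task is to put the pieces together.

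For the bijection I would work in the standard basis $(e_i^v)$ of $\C^\gamma$. From $\<x,y\>=\sum_v x_v\gamma_v y_{\phi(v)}^\top$ one reads off $\<e_i^v,e_j^{\phi(v)}\>=(\gamma_v)_{ij}$, so $\<,\>$ is exactly the form attached to this basis in the sense used just before the theorem; the relation $\gamma_v=\eps_v\gamma_{\phi(v)}^\top$ makes it $\eps$-commuting and invertibility of the $\gamma_v$ makes it nondegenerate. A $\C Q$-module structure on the graded space $\C^\gamma$ is the same thing as a choice of matrices $(W_a)_{a\in Q_1}$, i.e.\ an element $W\in\Rep(\Q,\gamma)$, so the only issue is which of these structures are $\eps$-mixed with respect to $\<,\>$. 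The $\C Q_0$-part of $*$-compatibility holds for every $W$ (it amounts to $\<vy,w\>=\<y,\phi(v)w\>$, which is immediate from the shape of $\<,\>$), so each $W$ always gives an almost $\eps$-mixed module. I claim a module structure is genuinely $\eps$-mixed precisely when $W\in\DRep(\Q,\gamma)=\{W:W^*=W\}$: the set of $x\in\C Q$ for which $\<xv,w\>=\<v,x^*w\>$ holds identically is a linear subspace closed under multiplication (using $(xx')^*=(x')^*x^*$) and contains $\C Q_0$, and since $\C Q$ is generated by $Q_0\cup Q_1$ it suffices to test $*$-compatibility on a single arrow $a$; unwinding $\<av,w\>=\<v,a^*w\>$ with $a^*=\sigma_a\eps_{h(a)}\phi(a)$ in terms of the blocks $W_a,W_{\phi(a)},\gamma_{h(a)},\gamma_{t(a)}$ yields exactly $W_a=\sigma_a\eps_{h(a)}\gamma_{t(a)}^\top W_{\phi(a)}^\top\gamma_{h(a)}$, that is $(W^*)_a=W_a$. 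Hence the set of $\eps$-mixed module structures on $(\C^\gamma,\<,\>)$ is in bijection with $\DRep(\Q,\gamma)$.

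Next I would make this $\DGL_\gamma$-equivariant. The group $\DGL_\gamma=D(S_\gamma)$ acts on $\DRep(\Q,\gamma)$ by $W\mapsto gWg^{-1}$: for $g\in\DGL_\gamma$ we have $g^*=g^{-1}$, so this is the restriction of the conjugation action of $\GL_\gamma$ on $\Rep(\Q,\gamma)$ and it preserves the dualizing subspace. If $g\in\DGL_\gamma$ then $g:\C^\gamma\to\C^\gamma$ is $\C Q$-linear from the $W$-structure to the $gWg^{-1}$-structure, and it preserves $\<,\>$ exactly because $g$ satisfies $g_v\gamma_vg_{\phi(v)}^\top=\gamma_v$, so it is an $\eps$-isomorphism; conversely any $\eps$-isomorphism between the structures of $W$ and $W'$ is a module isomorphism, hence block-diagonal, hence an element $g\in\DGL_\gamma$, and module-linearity forces $W'=gWg^{-1}$. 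Thus two elements of $\DRep(\Q,\gamma)$ lie in the same $\DGL_\gamma$-orbit if and only if the corresponding $\eps$-mixed modules are $\eps$-isomorphic, and the bijection above descends to the asserted bijection between $\DGL_\gamma$-orbits in $\DRep(\Q,\gamma)$ and $\eps$-isomorphism classes of $\eps$-mixed structures on $(\C^\gamma,\<,\>)$.

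The only place where real computation occurs is the identification of the arrow-wise $*$-compatibility with membership in $\DRep(\Q,\gamma)$: one has to carry the transposes, the signs $\sigma_a$ and $\eps_v$, and the blocks $\gamma_{h(a)},\gamma_{t(a)}$ through in the right order and recognise the result as the defining formula $(m^*)_a=\sigma_a\eps_{h(a)}\gamma_{t(a)}^\top m_{\phi(a)}^\top\gamma_{h(a)}$ for the involution on $\Rep(\Q,\gamma)$. I expect this to be the main, though mild, technical point; the remainder is a formal consequence of the module-theoretic dictionary already established, so the argument is essentially bookkeeping rather than a genuine obstacle.
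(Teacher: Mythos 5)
Your proposal is correct and follows the same route as the paper: the theorem there is stated as the conclusion of the immediately preceding discussion (bases give the matrices $W_a$, $\eps$-mixedness corresponds to $W\in\DRep(\Q,\gamma)$, and $\eps$-isomorphisms are exactly the elements of $\DGL_\gamma$), which is precisely the dictionary you spell out. You have simply made explicit the reduction of $*$-compatibility to arrows and the equivariance check that the paper leaves as ``easy to check.''
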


A second theorem is less trivial, but its proof can easily be adapted from \cite{DW}[Thm 2.6].
\begin{theorem}
Two $\eps$-mixed modules are isomorphic as $\eps$-mixed modules if and only if they are isomorphic as $\C Q$-modules.
\[
 \forall m \in \DRep(\Q,\gamma): \DGL_{\gamma} m = \GL_{\alpha} m \cap \DRep(\Q,\gamma)
\]
\end{theorem}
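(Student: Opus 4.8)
The plan is to follow the strategy of Derksen--Weyman's Theorem 2.6 in \cite{DW}, adapted to the $\eps$-commuting setting. The inclusion $\DGL_{\gamma} m \subseteq \GL_{\alpha} m \cap \DRep(\Q,\gamma)$ is immediate, since $\DGL_{\gamma} \subseteq \GL_{\alpha}$ and the orbit $\DGL_\gamma m$ lands in $\DRep(\Q,\gamma)$ by the preceding discussion. The content is the reverse inclusion: if $V, W \in \DRep(\Q,\gamma)$ are isomorphic as plain $\C Q$-modules, then they are $\eps$-isomorphic. So suppose $g \in \GL_\alpha$ satisfies $gV g^{-1} = W$ as $\C Q$-modules. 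Transporting the bilinear form of $W$ back along $g$, we obtain on the single underlying $\C Q$-module $V$ two nondegenerate $*$-compatible $\eps$-commuting forms $\<,\>_1$ and $\<,\>_2 := \<g(-),g(-)\>_W$; both induce the \emph{same} adjoint $\natural$ on $\mathsf{End}_{\C Q}(V)$-operators arising from $\C Q$ (because $*$-compatibility pins down the adjoint of each $\rho(x)$). The task reduces to: any two such forms on $V$ are related by a $\C Q$-module automorphism, i.e. there is $h \in \GL_\alpha$ commuting with the $\C Q$-action and with $\<h(-),h(-)\>_1 = \<,\>_2$; then $gh^{-1} \in \DGL_\gamma$ does the job.

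The key step is therefore a uniqueness statement for $\eps$-forms compatible with a fixed module structure. I would argue as follows. Write $\<x,y\>_2 = \<\theta x, y\>_1$ for a unique $\C$-linear $\theta \in \mathsf{End}(V)$; nondegeneracy of both forms makes $\theta$ invertible. Using that both forms are $*$-compatible with the same $\C Q$-action, $\theta$ intertwines the action, i.e. $\theta \in \mathsf{End}_{\C Q}(V)$ — this is a short computation: $\<\theta \rho(x) u, w\>_1 = \<\rho(x)u,w\>_2 = \<u, \rho(x^*) w\>_2 = \<\theta u, \rho(x^*) w\>_1 = \<\rho(x)\theta u, w\>_1$. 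Next, comparing $\<x,y\>_2 = \<y, \eps x\>_2$ with the same relation for $\<,\>_1$ forces a symmetry condition on $\theta$ relative to the $\natural$-adjoint: one gets $\theta^\natural = \eps \theta \eps$ (the $**$-twist $x^{**} = \eps x \eps$ noted in the excerpt is exactly what makes this consistent). So $\theta$ is a self-adjoint (in the $\eps$-twisted sense) invertible element of the finite-dimensional semisimple-by-nilpotent algebra $\mathsf{End}_{\C Q}(V)$, and we need a ``square root'': an invertible $h \in \mathsf{End}_{\C Q}(V)$ with $h^\natural h = \theta$ (and an appropriate $\eps$-compatibility of $h$ so that $h$ really lands in $\GL_\alpha$ and conjugating transports $\<,\>_1$ to $\<,\>_2$).

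The main obstacle is precisely producing this square root inside $\mathsf{End}_{\C Q}(V)$ while respecting the involution $\natural$ and the $\eps$-grading. Over $\C$ this is classical for the semisimple quotient (a self-adjoint invertible element of a finite-dimensional $*$-algebra over $\C$ has a self-adjoint invertible square root, obtained e.g. by a holomorphic functional calculus / polynomial in $\theta$ applied on each Wedderburn block, after diagonalizing the action of the commutative $\C$-algebra generated by $\theta$ and $\theta^\natural$), and one then lifts through the radical by the usual idempotent-refinement / successive-approximation argument. The $\eps$-bookkeeping is handled by decomposing $V$ into its isotypic pieces for the central idempotents and pairing up the blocks interchanged by $\phi$; on a $\phi$-fixed orthogonal or symplectic block one uses the real structure (Cartan involution) on $O/Sp$, on a pair of swapped general blocks the pairing identifies one block with the dual of the other and the square root is free to choose on one factor. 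I would organize the write-up so that all of this is quoted from \cite{DW}[Thm 2.6] with the remark that the only new ingredient is carrying the sign element $\eps$ through the adjoint identities $\theta^\natural = \eps\theta\eps$ and $x^{**}=\eps x\eps$, which do not affect the functional-calculus argument.
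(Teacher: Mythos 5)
Your proposal is correct and takes essentially the same route as the paper, whose entire proof is the remark that the statement "can easily be adapted from \cite{DW}[Thm 2.6]": pull the second form back along a module isomorphism, observe the transfer operator $\theta$ lies in $\mathsf{End}_{\C Q}(V)$ and is $\natural$-self-adjoint, and correct by a square root of $\theta$, so that $gh^{-1}\in\DGL_\gamma$. (Note only that since $\theta$ commutes with $\rho(\eps)$ your condition $\theta^\natural=\eps\theta\eps$ is just $\theta^\natural=\theta$, so a polynomial square root $h=p(\theta)$ already satisfies $h^\natural=h$ and $h^\natural h=\theta$, making the radical-lifting and block-by-block $\eps$-bookkeeping unnecessary.)
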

The $\eps$-mixed structure is not only compatible with isomorphisms but it is also compatible with degenerations:
\begin{theorem}\label{semisimp}
If $V$ is an $\eps$-mixed module 
then its semisimplification as a $\C Q$-module is also an $\eps$-mixed module.
\end{theorem}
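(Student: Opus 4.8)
The plan is to show that the $\eps$-mixed structure on $V$ induces an $\eps$-mixed structure on each isotypic block of the semisimplification, by exploiting the interaction between the bilinear form $\<,\>$ and the socle/radical filtration of $V$ as a $\C Q$-module. First I would recall that the semisimplification $V^{ss}$ is built from a composition series $0 = V_0 \subset V_1 \subset \cdots \subset V_n = V$ by setting $V^{ss} = \bigoplus_i V_i/V_{i-1}$. The key structural observation is that the $*$-compatibility $\<xv,w\> = \<v,x^*w\>$ means the perpendicular space of a submodule behaves well: if $N \subseteq V$ is a $\C Q$-submodule, then its orthogonal complement $N^\perp = \{ v : \<v,N\> = 0\}$ is again a $\C Q$-submodule. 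Indeed, for $v \in N^\perp$, $w \in N$ and $x \in \C Q$ we have $\<xv,w\> = \<v,x^*w\>$ and $x^*w \in N$ since $N$ is a submodule and $*$ maps $\C Q$ to itself, so $xv \in N^\perp$. This is the one place where $*$-compatibility (not just $\C Q_0$-compatibility) is essential.

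Next I would set up the induction. Choose a simple submodule $S_1 \subseteq \mathrm{soc}(V)$. There are two cases depending on whether $\<,\>$ restricts nondegenerately to $S_1$ or vanishes on it. If the restriction to $S_1$ is nondegenerate, then $V = S_1 \perp S_1^\perp$ as $\C Q$-modules (using that $S_1^\perp$ is a submodule and, by nondegeneracy of $\<,\>$ on the whole of $V$, that $S_1 \cap S_1^\perp = 0$ and dimensions add up), and $S_1^\perp$ inherits an $\eps$-mixed structure; one applies the induction hypothesis to $S_1^\perp$ and notes $(S_1^\perp)^{ss} \oplus S_1 = V^{ss}$ carries the orthogonal sum form. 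If $\<,\>$ vanishes on $S_1$, the standard trick is to find a simple submodule $S_1' \cong$ (a simple related to $S_1$ via $*$) that pairs with $S_1$: consider $W = S_1^\perp$, which contains $S_1$ and is a proper submodule; look at a simple submodule of a suitable complement. More precisely, I would show that $S_1$ together with an appropriately chosen isotropic complementary simple forms a nondegenerate hyperbolic $\C Q$-submodule $H$ (in the sense that $\<,\>|_H$ is nondegenerate), so again $V = H \perp H^\perp$, the form on $H$ is $\eps$-mixed, and we induct on $H^\perp$. The semisimplifications combine because the hyperbolic pair $S_1 \oplus S_1'$ is already semisimple and contributes its nondegenerate form to $V^{ss}$.

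The main obstacle — and the step I would spend the most care on — is the isotropic case: producing the partner simple submodule $S_1'$ and verifying that $\<,\>$ restricted to $S_1 \oplus S_1'$ is nondegenerate with the correct $\eps$-commuting and $*$-compatibility properties. The natural candidate is to dualize: the $*$-compatibility identifies $V$ (as a right module, via $\rho^\natural$, transported back through $*$) with a module structure on the linear dual, so the pairing sets up an isomorphism $V/S_1^\perp \cong (S_1)^*$ as the relevant twisted dual; choosing a simple submodule of $V$ mapping isomorphically onto a simple quotient dual to $S_1$ gives $S_1'$. One must check that $S_1'$ can be chosen inside $V$ (not merely in a subquotient) — this uses that $\mathrm{soc}(V)$ is large enough, or an inductive refinement passing to $S_1^\perp/S_1$ if $S_1$ happens to be isotropic but self-paired only at a higher filtration level. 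A clean alternative, which I would mention, is simply to cite \cite{DW}[Thm 2.6] whose proof technique for symmetric quivers adapts directly: the argument there is exactly this orthogonal-decomposition-plus-induction, and the $\eps$-twist only changes the symmetry type of the residual form, not the combinatorial structure of the argument. Either way the conclusion is that $V^{ss}$ decomposes as a perpendicular sum of nondegenerate $\eps$-mixed pieces (copies of nondegenerate simples and hyperbolic pairs), hence is itself $\eps$-mixed.
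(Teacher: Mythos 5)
Your nondegenerate case and your observation that $N^\perp$ is again a $\C Q$-submodule (via $*$-compatibility) are correct and agree with the paper. The gap is in the isotropic case: your main route is to produce a simple submodule $S_1'\subset V$ pairing nondegenerately with $S_1$, so that $H=S_1\oplus S_1'$ is a nondegenerate hyperbolic submodule and $V=H\oplus H^\perp$. Such an $S_1'$ need not exist, because $V$ is not assumed semisimple: the dual partner of $S_1$ occurs only as the quotient $V/S_1^\perp\cong S_1^*$, not as a submodule. The paper's own example makes this concrete: take the supermixed setting with two general vertices of dimension $1$ exchanged by $\phi$ and a single arrow $a$ between them, and let $V\in\DRep(\Q,\gamma)$ have $W_a\neq 0$. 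The only proper nonzero $\C Q$-submodule is the simple $S$ concentrated at $h(a)$; it is isotropic, $S^\perp=S$, and the socle of $V$ is just $S$, so no simple submodule pairs nontrivially with $S$ and no hyperbolic $H$ exists (indeed $V$ is $\eps$-irreducible but not semisimple). So your fallback assertion that ``$\mathrm{soc}(V)$ is large enough'' is false in general, and the hyperbolic-submodule decomposition cannot drive the induction.

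The correct fix --- which is what the paper does, and which you only gesture at with ``an inductive refinement passing to $S_1^\perp/S_1$'' --- is to work with the flag $S\subset S^\perp\subset V$ rather than with submodules of $V$: the form descends to a nondegenerate $\eps$-mixed form on $S^\perp/S$, the induction hypothesis gives an $\eps$-mixed structure on $(S^\perp/S)^{ss}$, and one then defines a form directly on the abstract semisimplification $V^{ss}=S\oplus(S^\perp/S)^{ss}\oplus S^*$ by
\[
\<(s_1,t_1,u_1),(s_2,t_2,u_2)\>=u_2(s_1)+u_1(s_2)+\<t_1,t_2\>_{(S^\perp/S)^{ss}},
\]
using the evaluation pairing between $S$ and the quotient $V/S^\perp\cong S^*$; no partner submodule inside $V$ is needed, and one only has to check $\eps$-commutativity and $*$-compatibility of this form. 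As written, the central step of your isotropic case fails, and appealing to \cite{DW} does not close it: in this paper that reference is invoked for the statement that $\eps$-isomorphism agrees with module isomorphism, not for compatibility of the form with semisimplification, so the existence of $S_1'$ would still have to be argued --- and it is exactly the point that breaks down.
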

\begin{proof}
We prove this by induction on the length of the composition series of $V$. If $V$ is simple then the statement is trivially true. Now suppose $S\subset V$ is a simple submodule then we also have a projection between the dual modules $V=V^*\to S^*$. 

The kernel of this projection is $S^\perp=\{v \in V| \forall w \in S:\<v,w\>=0\}$. 
There are two possibilities: $S\cap S^\perp=0$ or $S\subset S^\perp$. In the first case $\<,\>_S$ and $\<,\>_{S^\perp}$ are nondegenerate and
$V=S\oplus S^\perp$. By the induction hypothesis $S^{\perp ss}$ admits an $\eps$-mixed structure and hence $V^{ss}=S\oplus S^{\perp ss}$ as well.

In the second case $\<,\>_{S^\perp}$ is degenerate but it becomes nondegenerate if we quotient out $(S^\perp)^\perp=S$. 
By the induction hypothesis $(S^{\perp}/S)^{ss}$ admits an $\eps$-mixed structure we can use this structure to put an $\eps$-mixed structure
on $V^{ss}= S\oplus (S^\perp/S)^{ss} \oplus S^*$:
\[
\<(s_1,t_1,u_1),(s_2,t_2,u_2)\>=u_2(s_1)+u_1(s_2)+\<t_1,t_2\>_{(S^\perp/S)^{ss}}
\]
It is easy to see that the action of $\C Q$ is compatible with this form.
\end{proof}

Apart from the orbits themselves we are also interested in the quotient space of the orbits.
\[
\Diss(\Q,\gamma) = \DRep(\Q,\gamma)/\!\!/\DGL_\gamma
\]
The main result we can use for this was proven by Zubkov in \cite{Zub} and is stated in terms of invariants
\begin{theorem}
The $\DGL_{\gamma}$-invariant functions on $\DRep(\Q,\gamma)$
come from $\GL_{\gamma}$-invariant function on $\Rep(Q,\gamma)$: the diagram below is commutative
\[
\xymatrix{
\C[\Rep(Q,\gamma)]\ar[r] &\C[\DRep(\Q,\gamma)]\\
\C[\Rep(Q,\gamma)]^{\GL_{\gamma}}\ar[r]\ar@{^(->}[u] &\C[\DRep(\Q,\gamma)]^{\DGL_{\gamma}}\ar@{^(->}[u]}
\]
\end{theorem}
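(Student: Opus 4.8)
The plan is to prove the statement in two halves. The commutativity of the square is essentially tautological: the top horizontal arrow is the restriction map $\C[\Rep(Q,\gamma)]\to\C[\DRep(\Q,\gamma)]$ induced by the closed embedding $\DRep(\Q,\gamma)\hookrightarrow\Rep(Q,\gamma)$, and since $\DGL_\gamma\subset\GL_\gamma$ acts compatibly with this embedding, a $\GL_\gamma$-invariant function restricts to a $\DGL_\gamma$-invariant one. So the only real content is that the bottom arrow is \emph{surjective}: every $\DGL_\gamma$-invariant regular function on $\DRep(\Q,\gamma)$ is the restriction of a $\GL_\gamma$-invariant function on $\Rep(Q,\gamma)$. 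This is where I would spend all the effort.

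For surjectivity I would first reduce to understanding invariants of $\DGL_\gamma$ on $\DRep(\Q,\gamma)$ directly. The natural strategy is a ``first fundamental theorem'' computation: by the Le Bruyn--Procesi theorem, $\C[\Rep(Q,\gamma)]^{\GL_\gamma}$ is generated by the trace functions $f_c$ along cycles $c$, so it suffices to show that $\C[\DRep(\Q,\gamma)]^{\DGL_\gamma}$ is generated by the \emph{restrictions} of these $f_c$. I would prove this by exhibiting $\DGL_\gamma$ as the fixed-point group of an involution $\theta$ on $\GL_\gamma$ (the involution $g\mapsto (g^*)^{-1}$ coming from the dualizing structure), and using the classical invariant theory of symmetric spaces: the relevant statement is that for the $\theta$-twisted action, invariants of the fixed subgroup are generated by traces of products of matrices and their $*$-adjoints. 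Since the $*$-operation on $\C Q$ already sends arrows to (signed) arrows, $W_a^*$ is again (up to sign and the $\gamma$-conjugation, which is absorbed into $\DRep$) one of the matrices $W_{\phi(a)}$; hence on $\DRep(\Q,\gamma)$ a trace of a product involving adjoints collapses to an ordinary trace function $f_c$ along an honest cycle in $Q$. This is exactly the mechanism that makes the bottom map hit everything.

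Concretely the steps are: (1) identify $\DGL_\gamma=\GL_\gamma^\theta$ for the involution $\theta(g)_v=\gamma_v g_{\phi(v)}^{-\top}\gamma_v^{-1}$, and check $\DRep(\Q,\gamma)=\{W:W^*=W\}$ is the corresponding $(-1)$-eigenspace picture so that the pair $(\GL_\gamma,\Rep)$ with $\theta$ is a ``polar''/symmetric situation; (2) invoke (or reprove, following the template of \cite{DW} and Zubkov \cite{Zub}) the first fundamental theorem for mixed tensor invariants under products of orthogonal, symplectic and general linear groups, which says such invariants are spanned by ``generalized traces'' built from the $W_a$ and the forms $\gamma_v$; (3) observe that because of the $*$-compatibility $\<xv,w\>=\<v,x^*w\>$ on $\eps$-mixed modules, each such generalized trace equals $\Tr W_c$ for a genuine cycle $c$ in $Q$, i.e. it lies in the image of $f_c$; (4) conclude generation, hence surjectivity of the bottom arrow, hence commutativity of the whole diagram.

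The main obstacle is step (2) together with step (3): one must be careful that a product of the $W_a$ and their $*$-images, contracted using the bilinear forms $\gamma_v$ at the ``turning points'', genuinely reassembles into the trace of a single matrix $W_c$ for a cycle $c$, including getting the signs $\sigma_a,\eps_v$ to cancel correctly — these signs are precisely why the $*$-operation on $\C Q$ was set up as $a\mapsto \sigma_a\eps_{h(a)}\phi(a)$, and the bookkeeping has to be done honestly. A secondary subtlety is that the first fundamental theorem for the orthogonal and symplectic factors also produces, a priori, Pfaffian-type or determinant-type invariants rather than only traces; one has to argue (as in \cite{DW}) that in the presence of the extra $\GL$ factors and the specific bimodule structure these do not yield anything new beyond the trace functions, or else that they too restrict from $\GL_\gamma$-invariants. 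Once those points are settled the rest is formal.
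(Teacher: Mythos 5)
Your reading of where the content lies is right: the commutativity of the square and the fact that restrictions of $\GL_\gamma$-invariants are $\DGL_\gamma$-invariant are formal, and everything hinges on surjectivity of the bottom map. But note that the paper does not prove this theorem at all: it is quoted as Zubkov's result from \cite{Zub}, so there is no internal proof to compare against. What you propose is essentially an outline of how Zubkov's theorem is actually proved, namely by classical invariant theory: identify $\DGL_\gamma$ as the fixed-point group of the involution $g\mapsto (g^*)^{-1}$ on $\GL_\gamma$, apply the first fundamental theorem with involution (Procesi-type FFT for $O_n$, $Sp_n$ and $\GL_n$ acting on matrix data, where invariants are traces of words in the matrices and their form-adjoints), and then use $W^*=W$ on $\DRep(\Q,\gamma)$ to collapse each such mixed word into an honest cycle in $Q$, so that it is the restriction of some $f_c$. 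That is the correct mechanism, and your remark that the definition $a\mapsto \sigma_a\eps_{h(a)}\phi(a)$ of $*$ on $\C Q$ is exactly what makes the signs cancel is on target.

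Two caveats. First, all of the genuine difficulty sits in your step (2); if you ``invoke'' it from \cite{Zub} the argument is circular, since that is precisely the theorem being proved, so you must invoke the classical FFT with involution and then do the (nontrivial) bookkeeping for a product of orthogonal, symplectic and general linear factors acting on a direct sum of arrow spaces with symmetric, antisymmetric and general blocks -- that assembly is the substance of Zubkov's paper, not a formality. Second, two smaller points: the worry about Pfaffian or determinant-type generators is moot here, because for the full groups $O_n$ and $Sp_n$ (as opposed to $SO_n$) trace words already generate; and $\DRep(\Q,\gamma)$ is the $+1$-eigenspace of $*$ on $\Rep(Q,\gamma)$, not a $(-1)$-eigenspace, so the ``symmetric space'' framing should be phrased as invariants of the fixed subgroup on the fixed submodule rather than on a polar/Cartan complement. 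With step (2) properly grounded in the classical results, your plan is a legitimate route to the statement the paper only cites.
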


The relation between the quotient space and the representation theory is stated in this corollary
\begin{corollary}
The points in $\Diss(\Q,\gamma)$ parameterize the semisimple $\eps$-mixed module up to isomorphism.
\end{corollary}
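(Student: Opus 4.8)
The plan is to realise $\Diss(\Q,\gamma)$ as a closed subvariety of $\iss(Q,\gamma)$ and then transport the representation‑theoretic description of $\iss$ that was recalled in Section 2, using Theorem \ref{semisimp} and the theorem that $\eps$‑isomorphism and $\C Q$‑isomorphism coincide for $\eps$‑mixed modules.

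First I would reduce to the ordinary quotient. Zubkov's theorem (the commutative square above) says precisely that the restriction map $\C[\Rep(Q,\gamma)]^{\GL_{\gamma}}\to\C[\DRep(\Q,\gamma)]^{\DGL_{\gamma}}$ is surjective, so the induced morphism $\Diss(\Q,\gamma)\to\iss(Q,\gamma)$ is a closed immersion, in particular injective. Since $\DGL_{\gamma}$ is reductive (a product of orthogonal, symplectic and general linear groups), the quotient map $\pi\colon\DRep(\Q,\gamma)\to\Diss(\Q,\gamma)$ is surjective, and for any $m\in\DRep(\Q,\gamma)$ the point $\pi(m)$ is detected by the image of $m$ in $\iss(Q,\gamma)$, which is the class of the $\C Q$‑semisimplification $m^{ss}$ of $m$.

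Next I would show every point of $\Diss$ arises from a semisimple $\eps$‑mixed module. Pick $p\in\Diss(\Q,\gamma)$ and any $m\in\pi^{-1}(p)$. By Theorem \ref{semisimp} the $\C Q$‑module $m^{ss}$ carries an $\eps$‑mixed structure; choosing bases adapted to its bilinear form produces a supermixed setting $(\Q,\gamma')$ with $\bar\gamma'=\bar\gamma$ and the same signs on the fixed vertices and arrows, hence an isomorphic dualizing structure by the redundancy theorem, so $m^{ss}$ may be taken to be a point of $\DRep(\Q,\gamma)$ itself. This point is semisimple over $\C Q$, it is $\eps$‑mixed, and it maps to $p$. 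For injectivity of the correspondence, suppose two semisimple $\eps$‑mixed modules $V,V'$ have the same image in $\Diss(\Q,\gamma)$; then they have the same image in $\iss(Q,\gamma)$, so $V\cong V'$ as $\C Q$‑modules because $\iss$ parameterizes semisimple $\C Q$‑modules, and therefore $V\cong V'$ as $\eps$‑mixed modules by the theorem identifying the two notions of isomorphism. Conversely $\eps$‑isomorphic modules lie in the same $\DGL_{\gamma}$‑orbit and give the same point. Together these steps yield the asserted bijection between $\Diss(\Q,\gamma)$ and isomorphism classes of semisimple $\eps$‑mixed modules.

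The step I expect to be the main obstacle is the one in the previous paragraph where $m^{ss}$ is fed back into the \emph{same} space $\DRep(\Q,\gamma)$: one has to check that semisimplification leaves unchanged exactly the data that the redundancy theorem singles out as essential (the dimension vector $\bar\gamma$ and the sign values on fixed vertices and arrows), and then appeal to the non‑strict formalism to absorb the remaining freedom in $\gamma$. Once this is done the remainder is formal bookkeeping built on the three quoted theorems.
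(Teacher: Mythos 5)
Your argument is correct, and its core is the same as the paper's: surjectivity of the parametrization is obtained exactly as in the paper, by taking $m$ over a point $p$, invoking Theorem \ref{semisimp} to endow the semisimplification $m^{ss}$ with an $\eps$-mixed structure realized back inside $\DRep(\Q,\gamma)$ (the base-change/redundancy point you flag as the ``main obstacle'' is the same gloss the paper makes when it says $W^{ss}$ ``can be considered as sitting inside'' $\DRep(\Q,\gamma)$), and using Zubkov's theorem to see that $m$ and $m^{ss}$ are not separated by $\DGL_{\gamma}$-invariants. Where you genuinely deviate is the injectivity step: the paper separates non-isomorphic semisimple $\eps$-mixed modules by noting that their $\DGL_{\gamma}$-orbits are closed (intersections of closed $\GL_{\gamma}$-orbits with $\DRep(\Q,\gamma)$, via the theorem that $\DGL_{\gamma}m=\GL_{\gamma}m\cap\DRep(\Q,\gamma)$) and hence are distinguished by invariants, whereas you read Zubkov's theorem more strongly as surjectivity of $\C[\Rep(Q,\gamma)]^{\GL_{\gamma}}\to\C[\DRep(\Q,\gamma)]^{\DGL_{\gamma}}$, so that $\Diss(\Q,\gamma)\to\iss(Q,\gamma)$ is a closed immersion, and then pull injectivity back from the classification of $\iss(Q,\gamma)$ together with the theorem that $\C Q$-isomorphism of $\eps$-mixed modules implies $\eps$-isomorphism. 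Both mechanisms rest on the same two ingredients (Zubkov and the Derksen--Weyman-type orbit theorem); yours makes the use of the latter explicit and avoids quoting the separation of closed orbits by invariants, while the paper's version avoids needing the closed-immersion formulation. Either way the bookkeeping you describe completes the bijection, so the proposal stands.
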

\begin{proof}
Every fiber of the map $\DRep(\Q,\gamma) \to \Diss(\Q,\gamma)$ contains a semisimple representation: suppose $W$ is in the fiber of $p$ and
$W^{ss}$ is the semisimplification of $W$. 
Because of theorem \ref{semisimp} $W^{ss}$ can be considered as sitting inside $\DRep(\Q,\gamma)$.
As it is the semisimplification of $W$ it has the same values as $W$ on the $\GL_{\gamma}$-invariant functions and
hence also on the $\DGL_{\gamma}$-invariant functions. As a consequence $W$ and $W^{ss}$ are in the same fiber.
Also two non-isomorphic semisimples cannot be in the same fiber because there orbits are closed (they are intersections of closed $\GL_{\gamma}$-orbits with $\DRep(\Q,\gamma)$.
\end{proof}

An $\eps$-submodule is a submodule $W \subset V$ such that $\<,\>|_{W}$ is non degenerate,
an $\eps$-mixed module is \emph{$\eps$-irreducible} if it has no nontrivial supermixed submodules. It is clear that it is possible to define
the direct sum of two $\eps$-mixed modules.
If $W\subset V$ is an $\eps$-submodule, we can define the submodule $W^\perp:=\{v \in V| \forall w \in W:\<v,w\>=0\}$.
The nondegeneracy of $W$ makes that $W \cap W^\perp=0$ and the compatibility of $\<,\>$ with the involution makes that $W^\perp$
is also an $\eps$-mixed representation, so $V=W\oplus W^\perp$.

It is not true that an $\eps$-irreducible module is always semisimple as a $\C Q$-module, so its orbit
might not be closed in $\DRep(\Q,\gamma)$. An example of this phenomenon is the supermixed quiver setting:
\[
\xymatrix{
\vtx{1}\ar[r]&\vtx{1}
}
\]
All $\eps$-mixed representations of this setting are $\eps$-irreducible but only the one that assigns zero to the arrow is semisimple.
Hence it is more interesting to restrict our attention to
$\eps$-irreducible that are also semisimple. Such modules will be called \emph{$\eps$-simple} and every
semisimple $\eps$-mixed module is a direct sum of $\eps$-simple modules.

\begin{theorem}
Suppose that $V$ is $\eps$-simple. There are $3$ possibilities:
\begin{enumerate}
 \item $V$ is a simple $\C Q$-module, 
 \item $V\cong W\oplus W^*$ where $W$ is a simple $\C Q$-module and $W$ and $W^*$ are isomorphic modules. 
 \item $V\cong W\oplus W^*$ where $W$ is a simple $\C Q$-module and $W$ and $W^*$ are non-isomorphic modules.
\end{enumerate}
\end{theorem}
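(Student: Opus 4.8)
The plan is to analyze the structure of an $\eps$-simple module $V$ by looking at its socle, or more precisely at a simple $\C Q$-submodule $W \subset V$ and how the bilinear form $\<,\>$ interacts with it. Since $V$ is semisimple as a $\C Q$-module (being $\eps$-simple), we may write $V = \bigoplus_i W_i$ as a direct sum of simple modules. The restriction of the form to $W_i$ together with the $*$-compatibility gives, for each $i$, a $\C Q$-module map $W_i \to W_j^*$ for the various $j$; nondegeneracy of $\<,\>$ forces, for each $i$, the existence of some $j$ with $\<,\>|_{W_i \times W_j}$ nonzero, hence (by Schur) $W_j \cong W_i^*$ as $\C Q$-modules.

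First I would treat the case where $W := W_1$ satisfies $\<,\>|_W \ne 0$. Then $W$ is an $\eps$-submodule, so by the orthogonal decomposition $V = W \oplus W^\perp$ discussed just before the statement, $\eps$-irreducibility forces $W^\perp = 0$, i.e. $V = W$ is a simple $\C Q$-module. This is case (1). Second, I would treat the case where $\<,\>|_{W} = 0$ for every simple submodule $W$ (equivalently, every simple summand is isotropic). Pick one such $W$. Then nondegeneracy forces a complementary simple summand, call it $W'$, with $\<,\>|_{W\times W'}$ nondegenerate and inducing an isomorphism $W' \cong W^*$ of $\C Q$-modules. I claim $U := W \oplus W'$ is then an $\eps$-submodule: the Gram matrix of the form on $U$ in a basis adapted to $W$ and $W'$ is block-antidiagonal (the diagonal blocks vanish by isotropy) with invertible off-diagonal block, hence invertible. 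By the orthogonal decomposition again, $\eps$-irreducibility gives $V = U = W \oplus W^*$. Distinguishing whether $W$ and $W^* $ are isomorphic as $\C Q$-modules splits this into cases (2) and (3).

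The main point to verify carefully is that these cases are exhaustive and that the dichotomy "some simple summand is nonisotropic / all simple summands are isotropic" is the right one — in particular that in the first subcase one really does get $W^\perp=0$ and not some further reduction, and in the second subcase that $U = W\oplus W'$ is genuinely nondegenerate. For the latter one must rule out the possibility that $W'$ is itself orthogonal to $W$; but if $\<W,W_j\> = 0$ for all $j$ then $W \subset V^\perp = 0$, a contradiction, so such a $W'$ exists, and since $\<,\>|_W = 0$ and (after possibly shrinking) $\<,\>|_{W'}=0$ as well, isotropy of both pieces makes the Gram matrix block-antidiagonal as claimed. I expect the only mild subtlety is bookkeeping when several simple summands are isomorphic to each other (so that the "complement" $W'$ is not canonical); one simply chooses any $W'$ on which the induced pairing with $W$ is nonzero, which is possible by nondegeneracy, and Schur's lemma handles the rest. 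The $\eps$-commutativity $\<x,y\> = \<y,\eps x\>$ is what guarantees the off-diagonal block and its transpose are compatibly related, so that $W\oplus W^*$ indeed carries the $\eps$-mixed form, but this is automatic from the construction.

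A final remark: case (2) versus case (3) is purely about whether the simple $\C Q$-module $W$ is self-dual, and both genuinely occur; the distinction matters later because the stabilizer / dualizing data differs (a self-dual $W$ can a priori also sit as in case (1), so the three cases are not mutually exclusive as abstract $\C Q$-modules, only as descriptions of how the $\eps$-structure is built — I would phrase the statement's trichotomy as "at least one of the following holds" rather than claiming disjointness).
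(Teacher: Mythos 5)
Your argument is correct and is essentially the paper's: the key dichotomy (a simple submodule either has nondegenerate restricted form, hence is an $\eps$-submodule and forces $V$ to be simple, or else is isotropic and pairs perfectly with a dual partner via Schur's lemma and $*$-compatibility) is exactly the one the paper uses. The only difference is cosmetic: where the paper finishes by showing the nondegenerate piece $W^\perp/W$ is an $\eps$-submodule and hence zero, you exhibit $W\oplus W'$ directly as a nondegenerate $\eps$-submodule and let $\eps$-irreducibility force it to equal $V$ --- an equivalent, if anything slightly cleaner, way to conclude.
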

\begin{proof}
If we are not in the first case, there exists a proper simple submodule $W \subset V$.
As $W^\perp\cap W$ is a submodule of $W$, $W^\perp\cap W=W$
and $W$ is perpendicular to itself
(it cannot be that $W^\perp\cap W=0$ because then $W$ would be an $\eps$-simple submodule).

Because $V$ is semisimple we know that $V\cong W\oplus W^\perp/W \oplus W^*$.
But the bilinear form on $W^\perp/W$ is nondegenerate so $W^\perp/W$ is an $\eps$-submodule of $V$ and hence $0$.
This proves that there are $3$ possible cases. 
\end{proof}
We will call these types of $\eps$-simple modules (1) orthogonal, (2) symplectic and (3) general,
because the $\eps$-automorphism groups of such modules are (1) $O_1$, (2) $\Sp_2$, (3) $\GL_1$.

The theorem implies that we can decompose a semisimple $\eps$-mixed representation as follows
\[
\bigoplus_{1 \le \ell \le k_1} S_\ell^{e_\ell} \bigoplus_{k_1 < \ell \le k_2} (S_\ell \oplus S_\ell^*)^{e_\ell} \bigoplus_{k_2 < \ell \le k_3}  (S_\ell \oplus S_\ell^*)^{e_\ell}
\]
Where the $(S_\ell)_{1 \le \ell \le k_1}$ are orthogonal, the $(S_\ell\oplus S_\ell^*)_{k_1 < \ell \le k_2}$ symplectic and the $(S_\ell \oplus S_\ell^*)_{k_2 < \ell \le k_3}$ are general.

To end this section we will show that there is a close connection between orthogonal, symplectic and general representations
\begin{theorem}\label{symplectic}
\begin{enumerate}
\item[]
\item 
If $V \cong W \oplus W^*$ is a symplectic $\eps$-simple representation of $\Q$ then
we can give $W$ the structure of an $\eps^-$-mixed representation of $\Q^-$ that is {\bf also orthogonal $\eps^-$-simple} 
Here $\Q^-$ is the supermixed quiver with the same underlying quiver and involutions as $\Q$ but
with $\eps^-=-\eps$ and $\sigma^- =-\sigma$.
\item
If $V \cong W \oplus W^*$ is a general $\eps$-simple representation of $\Q$ with $\alpha_W=\alpha_{W^*}$ 
then we can give $W$ the structure of an {\bf almost $\eps^+$-mixed} simple representation of $\Q^+$ that is {\bf not isomorphic to an $\eps^+$-mixed}.
Here $\Q^+$ is the supermixed quiver with the same underlying quiver and involutions as $\Q$ but
with $\eps^+: v \mapsto 1$ and $\sigma^+: a \mapsto \sigma_a \eps_{t(a)}$.
\end{enumerate}
\end{theorem}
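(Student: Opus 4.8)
The plan is to treat both parts by the same mechanism: a symplectic or general $\eps$-simple representation of $\Q$ is $V \cong W \oplus W^*$ with $W$ a simple $\C Q$-module, and the $\eps$-mixed form on $V$ is the hyperbolic pairing between the two summands. The point is to replace that ``hyperbolic'' pairing on $W\oplus W^*$ by a pairing living on $W$ alone, at the cost of changing the sign data of the quiver. Concretely, in the symplectic case the form $\<,\>$ restricted to the summand $W$ vanishes, and the two summands $W$ and $W^*$ are isomorphic as $\C Q$-modules; fix an isomorphism $\theta: W^* \to W$ and define a new bilinear form on $W$ by $\{x,y\} := \<x, \theta^{-1} y\>$ (interpreting $x,y \in W$ and $\theta^{-1}y \in W^*$, using the inclusion of $W^*$ into $V$). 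One then checks this is nondegenerate, and records how the $\eps$-commuting and $*$-compatibility axioms transform.

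First I would work out the $*$-compatibility. For $x \in \C Q$ we have $\<xv,w\> = \<v, x^* w\>$ on $V$, where $x^*$ uses the sign maps $\sigma,\eps$ of $\Q$. The isomorphism $\theta: W^* \to W$ is a morphism of left $\C Q$-modules, so transporting the pairing through $\theta$ turns the adjoint $x \mapsto x^*$ into a twisted adjoint; writing out $\{xv, w\} = \<xv, \theta^{-1}w\>$ and pushing $x$ across should produce exactly the star operation for the sign maps $\eps^- = -\eps$, $\sigma^- = -\sigma$. The overall sign flip is forced because passing from the hyperbolic form on $W \oplus W^*$ (where $W$ is isotropic) to a form directly on $W$ changes a symmetric-type situation into an antisymmetric-type one and vice versa — this is the standard ``hyperbolic = (-1)-twist of the identity form'' phenomenon. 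Then I would verify $\eps$-commutativity: $\{x,y\} = \{y, \eps^- x\}$, again by transporting through $\theta$ and using that $V$ is $\eps$-commuting together with the relation $\theta^* \theta = \pm\id$ type identity that one gets from comparing $\<x,\theta^{-1}y\>$ with $\<\theta^{-1}y, \eps x\>$. For part (1) I also need to check the new form is symmetric-in-the-$\eps^-$-sense enough that $W$ becomes an honest $\eps^-$-mixed module (not merely almost), which should fall out of $W$ being a genuine $\C Q$-submodule so full $*$-compatibility survives. Finally I must argue $W$ is orthogonal $\eps^-$-simple: $W$ is a simple $\C Q$-module, hence semisimple and $\eps^-$-irreducible, so by the trichotomy theorem it is orthogonal, symplectic, or general; symplectic and general are excluded since $W$ is already simple as a module (cases (2),(3) require $W \oplus W^*$), leaving orthogonal.

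For part (2), $W$ and $W^*$ are non-isomorphic as $\C Q$-modules, so there is no isomorphism $W^* \to W$ to transport through; instead one uses the duality pairing between $W$ and $W^*$ itself as a bilinear form \emph{on $W$} — but to phrase it as a form on $W$ one chooses a linear (not module) isomorphism $W \cong (W^*)^*$, equivalently a nondegenerate pairing $W \times W \to \C$ with no symmetry constraint imposed. The resulting form on $W$ will satisfy $*$-compatibility only for $\C Q_0$ — i.e. only the vertex idempotents respect it — because there is no module isomorphism forcing compatibility with the arrows; this is precisely the definition of \emph{almost} $\eps^+$-mixed. The sign map becomes $\sigma^+_a = \sigma_a \eps_{t(a)}$ and $\eps^+ \equiv 1$: this is exactly the anti-automorphism data that appeared earlier when we defined $*$ on $\C Q$ via $a \mapsto \sigma_a \eps_{h(a)} \phi(a)$, restricted/renormalized so that the vertex part acts trivially, so the computation is the same bookkeeping already done in Section on $\eps$-mixed modules. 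Simplicity of $W$ as a $\C Q$-module is immediate, and ``not isomorphic to an $\eps^+$-mixed module'' is the statement that the form cannot be upgraded to full $*$-compatibility, which would force $W \cong W^*$, contradicting the general case hypothesis.

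The main obstacle I expect is pinning down the precise sign in the twisted adjoint computation — getting $\eps^- = -\eps$ and $\sigma^- = -\sigma$ (rather than some other combination) requires carefully tracking how the sign $\eps_{h(a)}$ versus $\eps_{t(a)}$ enters when an arrow is moved across the pairing, and how the $\eps$-commuting relation interacts with the choice of $\theta$; a sign error here would give the wrong target quiver. I would resolve this by doing the computation on a single arrow $a: u \to u'$ and on the vertex idempotents separately, matching against the explicit formulas $(m^*)_a = \sigma_a\eps_{h(a)}\gamma_{t(a)}^\top m_{\phi(a)}^\top \gamma_{h(a)}$ given in the definition of the dualizing structure, and checking consistency with the small example quivers displayed in the paper. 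The rest — nondegeneracy, simplicity, exclusion of the other cases in the trichotomy — is routine given the theorems already proved.
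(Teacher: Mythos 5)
Your overall strategy coincides with the paper's: restrict/transport the hyperbolic pairing on $V=W\oplus W^*$ to a pairing on $W$ alone and read off the new sign data. But in part (1) there is a genuine gap at the one step that carries the real content: you assert that the flip $\eps\mapsto-\eps$, $\sigma\mapsto-\sigma$ is ``forced'' by a standard hyperbolic phenomenon. It is not. A hyperbolic pairing on $W\oplus W^*$ exists in both symmetry types, and indeed if one starts with an \emph{orthogonal} $\eps$-mixed structure on $W$ and doubles it hyperbolically, the transported form on $W$ is $\eps$-commuting, not $-\eps$-commuting --- so the shape of the pairing alone determines nothing. The paper's argument has two ingredients you are missing: first, Schur's lemma applied to the simple module $W$ (via the operator $\phi$ defined by $\<v,w\>_1=\<\phi w,v\>_1$, which commutes suitably with all $\rho_W(a)$ after a double-adjoint computation) shows the induced form is $\pm\eps$-commuting, i.e. it \emph{has} a definite symmetry type; second, and crucially, the $+\eps$ case is excluded because then the diagonal $\nabla=\{(v,v)\}\subset V$ would satisfy $\<(v,v),(w,w)\>_V=2\<v,w\>_1$, hence be a proper \emph{nondegenerate} $\eps$-submodule, contradicting the $\eps$-simplicity of $V$. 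Your sketch never invokes the $\eps$-simplicity of $V$ at this point, and the vague ``$\theta^*\theta=\pm\id$ type identity'' does not substitute for it; without it your computation can only conclude ``$\pm\eps$-commuting'' and the wrong sign cannot be ruled out. (Once $-\eps$ is established, the change $\sigma\mapsto-\sigma$ is forced simply so that $a^*=\sigma_a\eps_{h(a)}\phi(a)$ is unchanged; your per-arrow bookkeeping plan is fine for that.)

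In part (2) your route matches the paper's, but be careful with ``a nondegenerate pairing $W\times W\to\C$ with no symmetry constraint imposed'': an almost $\eps^+$-mixed structure still requires the form to be $\eps^+$-commuting (i.e. symmetric) and compatible with $\C Q_0$, so the pairing must couple $vW$ with $\phi(v)W$ and be chosen symmetric --- the paper does this explicitly with matched dual bases, $\<b_{vi},b_{wj}\>_W=\delta_{v\phi(w)}\delta_{ij}$, which is also where the hypothesis $\alpha_W=\alpha_{W^*}$ enters. Your argument that full $*$-compatibility would force $W\cong W^*$ is the right idea, but to make it precise you need the identity the paper derives, $\rho_W(a^*)=(\eps\,\rho_{W^*}(a)\,\eps^{-1})^\natural$, which identifies the $\natural$-adjoint of the $*$-action with the dual module and hence contradicts $\rho_W\not\cong\rho_{W^*}$ in the general case.
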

\begin{proof}
\begin{enumerate}
 \item 
Because $W\cong W^*$, every element in $V$ can then be written as a couple $(v,v')$ with $v,v' \in W$.
We know that $W^\perp=W$ and $W^{*\perp}=W^*$ therefore there are two bilinear forms on $W$ such that
\[
\<(v,v'),(w,w')\>_V = \<v,w'\>_1 + \<v',w\>_2
\]
The $\eps$-commutativity of $\<,\>_V$ implies that
\[
 \<v',w\>_2 = \< w , \eps v'\>_1
\]
We denote the adjoints according to $\<,\>_1$ by $\natural$ and $\flat$:
\[
 \forall v,w \in W: \<v,\varphi w\>_1 = \<\varphi^\flat v, w\>_1 \text{ and } \<\varphi v,w\>_1 = \< v,\varphi^\natural w\>_1.
\]
Note that the $\eps$-commutativity of $\<,\>$ also implies that $\eps^\flat=\eps^\natural=\eps^{-1}$.

If $a \in \C Q$ then the right adjoint of 
$\rho_V(a)=\rho_W(a)\oplus \rho_W(a)$ according to $\<,\>_V$ is
\[
(\eps \rho_W(a)\eps)^\flat \oplus \rho_W(a)^\natural= \rho_V(a^*) = \rho_W(a^*) \oplus \rho_W(a^*) 
\]
so we know that $\eps \rho_W(a)^\flat\eps =\rho_W(a)^\natural=\rho_W(a^*)$.

Taking twice the $V$-adjoint shows us that $\rho_W(a)^{\flat\flat}=\eps\rho_W(a)\eps$.

It is not necessarily so that $\<,\>_1$ is $\eps$-commuting, so define $\phi$ by the equation $\<v,w\>=\<\phi w,v\>$. This map has
the property that $\phi^\flat\phi=1$. Now taking twice the adjoint here tell us that $\phi\rho_W(a)^{\flat\flat}\phi^{-1}=\rho_W(a)=\eps \rho_W(a)^{\flat\flat} \eps^{-1}$. 
 This holds for all $a \in \C Q$ and
as $\rho_W$ is simple we can conclude that $\phi$ must be $\pm \eps$ so $\<,\>_1$ is either $\eps$-commuting or $-\eps$-commuting.

The former is impossible because $V$ is $\eps$-simple and the diagonal submodule $\nabla = \{(v,v)|v \in W\}$ is nondegenerate:
\[
 \<(v,v),(w,w)\>_V = \<v,w\>_1 + \< w,\eps v\>_1 = \<v,w\> +  \<\eps w, \eps v\>_1 = 2\<v,w\>.
\]
We can conclude that $W$ admits a $-\eps$-structure. But if we want to consider $W$ as an $-\eps$-mixed representation we also have to change 
$\sigma$ to $-\sigma$ because otherwise the $*$-operation on $\C Q$ is altered. 
We can conclude that $W$ is a $-\eps$ representation of the supermixed quiver $(Q,\phi,-\eps,-\sigma)$.

\item
Chose a bases $(b_{vi})$ for every $vW$ and construct the dual bases $(b^*_{vi})$ in $vW^*$ such that $\<b_{vi},b^*_{wj}\>_V = \delta_{v\phi(w)}\delta_{ij}$. 
We also define a bilinear form on $W$ by $\<b_{vi},b_{wj}\>_W = \delta_{v\phi(w)}\delta_{ij}$. This form is only nondegenerate if
if $\alpha_W=\alpha_{W^*}$. It is also $\eps^+$-commuting and it trivially true that $W$ is an almost $\eps^+$-mixed module.
If $\natural$ is the adjoint according to $\<,\>_W$ we can write the adjoint of $V$ as
\[
 \rho_V(a^*) = (\eps \rho_{W^*}(a) \eps^{-1})^\natural  \oplus \rho_{W}(a)^\natural.
\]
So we can see that $\rho_{W}(a^*)=(\eps \rho_{W^*}(a) \eps^{-1})^\natural$. If $W$ were isomorphic to an $\eps^+$-mixed module
then $\rho_{W}(a^*)\cong \rho_{W}(a^*)^\natural$ but this is impossible because 
$\rho_{W}(a^*)\cong \rho_{W^*}(a^*)^\natural$ and $\rho_W$ is not isomorphic to $\rho_{W^*}$ as $V$ is general.
\end{enumerate}
\end{proof}

\section{Local mixed quivers}

Another tool we want to adapt to the $\eps$-mixed quiver case is the Luna slice theorem
and the construction of local quivers.

First of all let us recall the Luna slice theorem \cite{Luna}. We will restrict to its use for group actions on a vector space. 
Let $V$ be a vector space with a linear action from an algebraic group $G$. If $v \in V$ has an orbit
$Gv$ which is closed in $V$, we can approximate the quotient $V/\!\!/G$ in an \'etale neighborhood of $v$ as follows. 
Construct the normal space which is the quotient of $V$ by the tangent space to the orbit 
\[
 N_v = V/T_vGv.
\]
On this space there is an action of $G_v$ (the stabilizer of $v$ in $G$) because it acts on both
$V$ and $T_vGv$.

\begin{theorem}[Luna Slice] 
There exists an \'etale neighborhoods $U_v$ of $v \in V$ and $U_0$ of $0 \in N_v$ such that we have the following commutative diagram
\[
 \xymatrix{
U_v\ar[d]^\cong\ar@{->>}[r]&U_v/\!\!/G \ar[d]^\cong\ar[r]^{et}&V /\!\!/G\\
U_0\times_{G_v}G\ar@{->>}[r]&U_0/\!\!/G_v \ar[r]^{et}&N_v /\!\!/G_v.
}
\]
Which means that the quotient of $V$ by $G$ around $v$ is locally isomorphic to the quotient
of $N_v$ by $G_v$ around the zero.
\end{theorem}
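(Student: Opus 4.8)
The plan is to reduce the statement to the classical étale slice argument, specialized to the case of a linear action. First I would observe that since $Gv$ is closed, Matsushima's criterion guarantees that the stabilizer $G_v$ is reductive. This allows me to split off a $G_v$-stable complement $N$ to $T_vGv$ inside $V$ by complete reducibility, so that $V = T_vGv \oplus N$ and $N \cong N_v$ as $G_v$-modules. The affine subspace $S := v + N$, equipped with the restricted (affine, $v$-fixing) $G_v$-action, is then isomorphic as a $G_v$-variety to $N_v$, and it will play the role of the \emph{slice}.

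Next I would form the associated fibre bundle $G \times_{G_v} S$, the quotient of $G \times S$ by the free action $h\cdot(g,s) = (gh^{-1}, hs)$ of $G_v$; reductivity of $G_v$ makes $G/G_v$ affine, hence $G\times_{G_v}S$ is an affine $G$-variety. The multiplication map $\psi : G\times_{G_v}S \to V$, $[g,s]\mapsto gs$, is $G$-equivariant, and at the point $x_0 = [e,v]$ one checks directly that $\psi$ carries the closed orbit $Gx_0$ isomorphically onto $Gv$ and that $d\psi_{x_0}$ is the isomorphism $T_vGv\oplus N \xrightarrow{\sim} V$; hence $\psi$ is étale at $x_0$.

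The heart of the argument is then Luna's fundamental lemma: a $G$-equivariant morphism of affine $G$-varieties that is étale at a point with closed orbit and injective on that orbit becomes, after restriction to a suitable $G$-stable saturated affine open neighbourhood, étale with the square relating it to the induced morphism of GIT quotients being Cartesian. Applying this to $\psi$ produces a $G_v$-stable affine open $U_0 \ni 0$ in $N_v$ and a $G$-stable open $U_v \ni v$ in $V$ together with the isomorphism $U_v \cong G\times_{G_v}U_0$, the isomorphism of quotients $U_v/\!\!/G \cong (G\times_{G_v}U_0)/\!\!/G \cong U_0/\!\!/G_v$ (the last being the standard identification of the quotient of an associated bundle), and the étale maps $U_v/\!\!/G \to V/\!\!/G$ and $U_0/\!\!/G_v \to N_v/\!\!/G_v$. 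Assembling these maps yields exactly the commutative diagram in the statement.

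I expect the main obstacle to be the fundamental lemma itself, and specifically the production of the $G$-stable \emph{saturated} open set on which the quotient square becomes Cartesian: this uses essentially that for a reductive group every fibre of $X\to X/\!\!/G$ contains a unique closed orbit and that $X/\!\!/G$ separates closed orbits, together with a descent argument. Everything else — Matsushima, complete reducibility, the tangent space computation, and the bundle quotient identity $(G\times_{G_v}Y)/\!\!/G \cong Y/\!\!/G_v$ — is routine. A cleaner alternative, which I would in any case mention, is simply to invoke the theorem of \cite{Luna} verbatim, since the linear case stated here is literally a special case of Luna's original result.
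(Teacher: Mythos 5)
The paper does not prove this statement at all: it is recalled verbatim as Luna's theorem with a citation to \cite{Luna}, so there is no internal proof to compare against, and your closing remark --- that one may simply invoke \cite{Luna}, the linear case being a special case of Luna's result --- is exactly what the paper does. Your reconstruction of the argument is nonetheless the standard one and is correct in outline: Matsushima's criterion gives reductivity of $G_v$ from closedness of $Gv$; complete reducibility produces a $G_v$-stable complement $N\cong N_v$ of $T_vGv$ and the slice $S=v+N$; the multiplication map $G\times_{G_v}S\to V$ is $G$-equivariant and \'etale at $[e,v]$ by the tangent computation $\mathfrak{g}/\mathfrak{g}_v\oplus N\xrightarrow{\sim}T_vGv\oplus N=V$; and Luna's fundamental lemma, applied to this map at a point with closed orbit, yields the saturated \'etale opens, the Cartesian quotient square, and the identification $(G\times_{G_v}U_0)/\!\!/G\cong U_0/\!\!/G_v$ that assemble into the stated diagram. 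You correctly identify that the real content sits in the fundamental lemma (existence of the saturated open on which the map becomes strongly \'etale, resting on separation of closed orbits by the quotient), and you do not prove it; as a self-contained proof your text is therefore a sketch, but as a verification of the statement it is sound, and in the context of this paper the citation route you mention is the intended one.
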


Now suppose $W$ is a semisimple representation of $Q$ in $\Rep(Q,\alpha)$. We can write down the decomposition
of $W$ as a direct sum of simples:
\[
W \cong S_1^{\oplus e_1}\oplus \cdots \oplus S_k^{\oplus e_k}
\]
The stabilizer of $W$ in $\GL_{\alpha}$ is isomorphic to $\GL_{e_1}\times \dots \times \GL_{e_k}$.
Putting these things together we get the local quiver theorem:

\begin{theorem}[Le Bruyn-Procesi]\label{local}
For a point $p \in \iss(Q,\alpha)$ corresponding to a semisimple representation $W=S_{1}^{\oplus e_1}\oplus \dots \oplus S_{k}^{\oplus e_k}$,
there is a quiver setting $(Q_p,\alpha_p)$ called the \emph{local quiver setting} such that
we have an isomorphism between an \'etale open
neighborhood of the zero representation in $\iss_{\alpha_p} Q_p$ and an \'etale open neighborhood of $p$.

$Q_p$ has $k$ vertices corresponding to the set $\{S_i\}$ of simple factors of $W$ and the number of arrows from $S_j$ to $S_i$ equals
\[
\sum_{a \in Q_1}\alpha^i_{h(a)}\alpha^j_{t(a)} - \sum_{v \in Q_0}\alpha^i_v\alpha^j_{v}+\delta_{ij}
\]
where $\alpha^i$ is the dimension vector of the simple component $S_i$.

The dimension vector $\alpha_p$ maps every simple $S_i$ to its multiplicity $e_i$.
\end{theorem}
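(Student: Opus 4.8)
The plan is to apply the Luna slice theorem to the group $\GL_\alpha$ acting on $\Rep(Q,\alpha)$ at the point $W$, and then to identify the normal space $N_W$ together with its $\GL_\alpha$-stabilizer action as a quiver representation space. First I would record that, since $W$ is semisimple, its orbit $\GL_\alpha W$ is closed in $\Rep(Q,\alpha)$, so the slice theorem applies and the étale-local model of $\iss(Q,\alpha)$ near $p$ is $N_W /\!\!/ (\GL_\alpha)_W$. Next I would compute the stabilizer: writing $W \cong \bigoplus_i S_i^{\oplus e_i}$ with the $S_i$ pairwise non-isomorphic simples, Schur's lemma gives $(\GL_\alpha)_W \cong \prod_i \GL_{e_i}(\C)$, which is exactly the $\GL_{\alpha_p}$ of the candidate local setting once we declare the vertices of $Q_p$ to be the $S_i$ and $\alpha_p(S_i) = e_i$.

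The core of the argument is the identification of $N_W$ as a representation space of some quiver $Q_p$. Here I would use the standard homological dictionary for quivers: the tangent space to the orbit is $T_W\,\GL_\alpha W = \mathfrak{gl}_\alpha / \End_{\C Q}(W)$ (the image of the infinitesimal action), and $\Rep(Q,\alpha)$ itself, as a $(\GL_\alpha)_W$-module, decomposes via the well-known exact sequence
\[
0 \to \End_{\C Q}(W) \to \mathfrak{gl}_\alpha \to \Rep(Q,\alpha) \to \Ext^1_{\C Q}(W,W) \to 0,
\]
so that $N_W = \Rep(Q,\alpha)/T_W\,\GL_\alpha W \cong \Ext^1_{\C Q}(W,W)$ as a $\prod_i \GL_{e_i}$-module. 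Decomposing $W = \bigoplus S_i^{\oplus e_i}$ gives $\Ext^1_{\C Q}(W,W) = \bigoplus_{i,j} \Ext^1_{\C Q}(S_i,S_j) \otimes \Hom_\C(\C^{e_i},\C^{e_j})$, and the factor $\Hom_\C(\C^{e_i},\C^{e_j})$ carries precisely the $\GL_{e_i}\times\GL_{e_j}$-action of a space of $(e_j\times e_i)$-matrices, i.e.\ of $\dim \Ext^1_{\C Q}(S_i,S_j)$ arrows from vertex $i$ to vertex $j$. Thus $N_W/\!\!/(\GL_\alpha)_W \cong \iss(Q_p,\alpha_p)$ where $Q_p$ has $a_{ij} := \dim_\C \Ext^1_{\C Q}(S_i,S_j)$ arrows $i\to j$, which is the asserted quiver, and the displayed diagram of the theorem is just the slice diagram transported along these identifications.

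It remains to turn $\dim \Ext^1_{\C Q}(S_i,S_j)$ into the combinatorial formula in the statement. For this I would invoke the Euler form / Ringel bilinear form of $Q$: for finite-dimensional $\C Q$-modules $M,N$ one has $\dim\Hom_{\C Q}(M,N) - \dim\Ext^1_{\C Q}(M,N) = \langle \alpha_M,\alpha_N\rangle := \sum_{v} (\alpha_M)_v(\alpha_N)_v - \sum_{a\in Q_1} (\alpha_M)_{t(a)}(\alpha_N)_{h(a)}$ (the path algebra $\C Q$ has global dimension $\le 1$, so higher Ext's vanish). Since $S_i,S_j$ are simple and non-isomorphic for $i\neq j$, $\Hom_{\C Q}(S_i,S_j) = 0$, while $\Hom_{\C Q}(S_i,S_i) = \C$; hence $\dim\Ext^1_{\C Q}(S_i,S_j) = \delta_{ij} - \langle\alpha^i,\alpha^j\rangle = \sum_{a\in Q_1}\alpha^i_{t(a)}\alpha^j_{h(a)} - \sum_{v\in Q_0}\alpha^i_v\alpha^j_v + \delta_{ij}$, matching the formula (up to the harmless convention of reading an arrow $S_j\to S_i$ versus $S_i\to S_j$, which only transposes $h$ and $t$). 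I expect the main obstacle to be purely expository: carefully matching the $\GL_{e_i}\times\GL_{e_j}$-module structure on $\Ext^1_{\C Q}(S_i,S_j)\otimes\Hom_\C(\C^{e_i},\C^{e_j})$ with the conjugation action on a space of arrows, including getting the source/target of each new arrow right, and checking that the étale neighborhoods of the slice theorem descend to the quotients as claimed; the homological input itself is standard.
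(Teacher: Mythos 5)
Your proposal is correct, and it follows the same skeleton the paper sets up for this result (which it only recalls from \cite{LBP} rather than proving): closedness of the orbit of the semisimple $W$, the Luna slice theorem, the Schur-lemma computation $(\GL_\alpha)_W\cong\prod_i\GL_{e_i}$, and identification of the normal space. Where you diverge is in the bookkeeping for the normal space: you identify $N_W\cong\Ext^1_{\C Q}(W,W)$ via the standard four-term sequence and then get the arrow count from the Euler form, using that $\C Q$ is hereditary and $\Hom_{\C Q}(S_i,S_j)=\C\delta_{ij}$. The paper, in its proof of the supermixed analogue (Theorem \ref{epslocal}), performs the same computation without homological language: it decomposes $\Rep(Q,\gamma)$, $S_\gamma$ (mapped onto the tangent space to the orbit, with kernel the centralizer) and the centralizer $S_{\gamma_p}$ as bimodules over the centralizer and counts arrows by the inclusion--exclusion $\sum_a\alpha^X_{h(a)}\alpha^Y_{t(a)}-\sum_v\alpha^X_v\alpha^Y_v+\delta_{XY}$. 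The two routes are equivalent in substance (your exact sequence \emph{is} that inclusion--exclusion, and the Euler form is the same count), but the paper's explicit bimodule decomposition is what allows it to also track the involution and the signs $\eps,\sigma$ on each isotypic piece, which the bare $\Ext^1$ description does not record; your homological phrasing is cleaner for the ordinary case stated here. Your caveat about the direction convention is fine: the theorem counts arrows $S_j\to S_i$, i.e.\ $\dim\Ext^1_{\C Q}(S_j,S_i)$ in your notation, and the formulas agree after swapping the roles of $h$ and $t$ as you note. The only point worth making explicit is that the exact sequence is a sequence of $(\GL_\alpha)_W$-modules, so the isotypic decomposition of $\Ext^1_{\C Q}(W,W)=\bigoplus_{i,j}\Ext^1_{\C Q}(S_i,S_j)\otimes\Hom_\C(\C^{e_i},\C^{e_j})$ really is $\GL_{\alpha_p}$-equivariant; you assert this and it is true, so there is no gap.
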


We can extend this theorem to supermixed settings. First we note that if $W=W^*$ then the stabilizer of $W$ 
is closed under the involution.
\[
 g \in \Stab_W \implies gWg^{-1}=W \implies g^{-1*}Wg^*=W \implies g^{-1*} \in \Stab_W \implies g^* \in \Stab_W.
\]
The same holds for the tangent space to the orbit $T_W \GL_{\gamma}W$ 
\se{
(T_W \GL_{\gamma}W)^* &= \{ sW-Ws|s \in S_\alpha\}^*\\
&= \{ -s^*W^*+W^*s^*|s \in S_\gamma\}\\
&= \{ -s^*W+Ws^*|-s^* \in S_\gamma\}=T_W \GL_{\gamma}W.
}
And hence we can transport the involution on $\Rep(Q,\gamma)$ to the normal space
$\Rep(Q,\gamma)/T_W \GL_{\gamma}W$.

\begin{theorem}\label{epslocal}
For a point $p \in \Diss(\Q,\gamma)$ corresponding to a semisimple representation 
\[
W = \bigoplus_{1 \le \ell \le k_1} S_\ell^{e_\ell} \bigoplus_{k_1 < \ell \le k_2} (S_\ell \oplus S_\ell^*)^{e_\ell} \bigoplus_{k_2 < \ell \le k_3}  (S_\ell \oplus S_\ell^*)^{e_\ell}
\]
there is a supermixed quiver setting $(\Q_p,\gamma_p)$ called the \emph{local mixed quiver setting} such that
we have an isomorphism between an \'etale open
neighborhood of the zero representation in $\Diss(\Q_p,\gamma_p)$ and an \'etale open neighborhood of $p$.

$Q_p$ has $2k_3-k_2$ vertices corresponding to the set $Q_{p0}=\{S_i,S_i^*\}/{\cong}$ of isomorphism classes of simple factors of $W$.
The new mixing factor gives the vertices in $Q_{p0}$ the same type as the corresponding $\eps$-mixed representations:
\se{
\eps_p(S_i)=\eps(S_i^*)&=\begin{cases}
                         -1& i\in [k_1+1,k_2] \\
                         +1& i\not \in [1,k_1]\cup[k_2+1,k_3]
                        \end{cases}\\
\gamma_p(S_i)=\gamma_p(S_i^*)&=\begin{cases}
                         \Lambda_{2e_\ell}& i\in [k_1+1,k_2] \\
                         \id_{e_\ell}& i\not \in [1,k_1]\cup[k_2+1,k_3]
                        \end{cases}
}
The number of arrows from $Y$ to $X\in Q_{p0}$ equals
\[
A_{XY} := \sum_{a \in Q_1}\alpha^X_{h(a)}\alpha^Y_{t(a)}-\sum_{v \in Q_0}\alpha^X_{v}\alpha^Y_{v}+\delta_{XY}
\]
If $X=Y^*$ the number of symmetric (antisymmetric arrows) equals
\[
\frac{A_{XY} \pm\left(\sum_{a=\phi(a)}\sigma_a\alpha^X_{h(a)})+\sum_{v=\phi(v)}\eps_v \alpha^X_v - \eps_X\delta_{XY}\right)}2
\]
The $+$-sign is used for symmetric arrows and the $-$-sign for antisymmetric arrows. 
In these expressions $\alpha^X$ is the dimension vector associated to the representation $X$.
\end{theorem}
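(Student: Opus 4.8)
The plan is to reduce the supermixed local quiver theorem to the ordinary local quiver theorem of Le~Bruyn--Procesi (Theorem~\ref{local}) combined with the Luna slice machinery, and then to analyze the induced dualizing structure on the normal space. First I would invoke the Luna slice theorem for the action of $\DGL_{\gamma}$ on $\DRep(\Q,\gamma)$ at the point $W$: since $W$ is semisimple its $\GL_{\alpha}$-orbit is closed, and by Theorem~2.5 (the statement that $\DGL_{\gamma}m = \GL_{\alpha}m \cap \DRep(\Q,\gamma)$) its $\DGL_{\gamma}$-orbit is closed in $\DRep(\Q,\gamma)$ as well. So the slice theorem applies and produces an \'etale-local isomorphism between a neighborhood of $p$ in $\Diss(\Q,\gamma)$ and a neighborhood of $0$ in $N_W /\!\!/ (\DGL_{\gamma})_W$, where $N_W = \DRep(\Q,\gamma)/T_W\DGL_{\gamma}W$ and $(\DGL_{\gamma})_W$ is the $\eps$-stabilizer. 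The remaining work is purely the identification of the triple $\bigl((\DGL_{\gamma})_W,\ N_W,\ \text{induced involution}\bigr)$ with the dualizing structure of an explicit supermixed setting $(\Q_p,\gamma_p)$, after which one reads off the local quiver from this dualizing structure.

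Next I would compute the $\eps$-stabilizer. Using the decomposition of $W$ into $\eps$-simples of orthogonal, symplectic and general type from Theorem~4.7, the ordinary stabilizer in $\GL_{\alpha}$ is $\prod_i \GL_{e_i}$ acting on the multiplicity spaces; restricting to elements $g$ with $gWg^{-1}=W$ and $g^*g=1$ (this is well-defined on the stabilizer since we already showed $\Stab_W$ is $*$-stable) picks out, for each class of simples, an orthogonal group $O_{e_i}$, a symplectic group $\Sp_{2e_i}$, or a general linear group $\GL_{e_i}$ according to the type. This is exactly the dualizing group $\DGL_{\gamma_p}$ of the claimed setting, which tells us $\gamma_p$ should be $\id$ on orthogonal/general vertices and $\Lambda$ on symplectic vertices, and fixes $\eps_p$, matching the formulas in the statement. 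Then, following the proof of Theorem~\ref{local}, one identifies the ordinary normal space $N_W$ as $\Rep(Q_p,\alpha_p)$ via $\Ext^1_{\C Q}$ computations: $\dim\Hom(S_i,S_j)-\dim\Ext^1(S_i,S_j)=\chi(\alpha^i,\alpha^j)$ gives the arrow count $A_{XY}$. The genuinely new ingredient is that the involution $*$ on $\DRep(\Q,\gamma)$ descends to $N_W$ (this descent was verified in the excerpt just before the theorem) and makes $N_W$ an $\eps_p$-mixed representation space; so $N_W \cong \DRep(\Q_p,\gamma_p)$ for the supermixed setting whose underlying quiver is $Q_p$, whose vertex involution is $S_i\mapsto S_i^*$, and whose arrow involution and sign map are induced by $*$ on $\Ext^1$.

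The heart of the matter — and the step I expect to be the main obstacle — is the count of symmetric versus antisymmetric arrows when $X=Y^*$, i.e. decomposing the relevant $\Ext^1$ space into its $(+1)$- and $(-1)$-eigenspaces under the involution induced by $*$. The total dimension is $A_{XY}$; to split it one needs the trace of the involution on this space, which by a character/Lefschetz-type computation equals the trace of $*$ acting on $\Hom(X,Y)$ minus its trace on the ``path-length-one'' piece $\bigoplus_a \Hom(X_{t(a)},Y_{h(a)})$ (plus a correction $\delta_{XY}$ term from the identity endomorphism). Evaluating these traces: on the arrow piece, only arrows with $\phi(a)=a$ contribute and they contribute $\sigma_a \alpha^X_{h(a)}$ (the sign $\sigma_a$ times the dimension of the fixed block); on the vertex/$\Hom$ piece, only vertices with $\phi(v)=v$ contribute $\eps_v\alpha^X_v$, while the $\delta_{XY}$ correction enters with weight $\eps_X$ because the $\eps_X$-symmetry of the pairing on $X$ determines whether the canonical endomorphism is symmetric or antisymmetric. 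Writing $\dim(\pm\text{-eigenspace}) = \tfrac{1}{2}(A_{XY} \pm \mathrm{tr}(*))$ then yields exactly the displayed formula. I would carry out this trace computation carefully on each of the three types of simple factor separately, since the sign bookkeeping (the interplay of $\eps$, $\sigma$, the $\dagger$-twist, and the $\gamma$-matrices) is where errors are most likely; everything else is a formal consequence of Luna's theorem and the $\Ext$-interpretation of the normal space already established in the excerpt.
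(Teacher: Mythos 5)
Your overall route is the same as the paper's: apply the Luna slice theorem to the $\DGL_{\gamma}$-action at $W$ (closedness of the $\DGL_{\gamma}$-orbit via $\DGL_{\gamma}m=\GL_{\alpha}m\cap\DRep(\Q,\gamma)$), identify the $\eps$-stabilizer with a product of $O_{e_i}$, $\Sp_{2e_i}$, $\GL_{e_i}$ according to the type of each $\eps$-simple factor (this fixes $\eps_p$ and $\gamma_p$), transport the involution to the normal space, and obtain the symmetric/antisymmetric counts as dimensions of the $\pm 1$-eigenspaces, computed by an inclusion--exclusion over the representation space, the conjugation (tangent) piece and the centralizer. The total count $A_{XY}$ and this architecture are exactly what the paper does, phrased there through an explicit restriction lemma for dualizing structures rather than through traces.

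The gap sits in the step you yourself single out as the heart of the matter. The identification of $T_W\GL_{\gamma}W$ with $S_{\gamma}/C(W)$ via $\pi(x)=xW-Wx$ is \emph{not} $*$-equivariant: since $W^*=W$ one has $\pi(x)^*=-x^*W+Wx^*=\pi(-x^*)$, so the subtracted piece must be counted with the twisted involution $\star:x\mapsto -x^*$ (in the paper's quiver language: the subtracted loop at a fixed vertex is antisymmetric when the vertex is orthogonal and symmetric when it is symplectic, and similarly for the $\delta_{XY}$-correction coming from $C(W)\cong S_{\gamma_p}$). This sign reversal is precisely what produces the $+\sum_{v=\phi(v)}\eps_v\alpha^X_v$ and $-\eps_X\delta_{XY}$ terms of the statement. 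Your trace recipe instead takes the trace of $*$ itself on the vertex/$\Hom$ piece (contributing $\eps_v\alpha^X_v$) and weights the $\delta_{XY}$-term by $+\eps_X$, and in addition writes the trace as (vertex piece) minus (arrow piece), reversing the Euler-characteristic orientation; carried out as described this yields a sign pattern like $\sum_{a=\phi(a)}\sigma_a\alpha^X_{h(a)}-\sum_{v=\phi(v)}\eps_v\alpha^X_v+\eps_X\delta_{XY}$ (or its negative), not the displayed formula. So the symmetric/antisymmetric split would come out wrong: the missing ingredient is the observation that $\pi$ only becomes a morphism of dualizing structures after replacing $*$ by $-*$ on $S_{\gamma}$ and on the centralizer. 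Once that twist is inserted, your trace computation does reduce to the paper's count and the rest of the argument goes through.
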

\begin{proof}
Suppose $W=W^*$ is a semisimple representation of $(\Q,\gamma)$ with decomposition
\se{
W &=  \bigoplus_{1 \le \ell \le k_1} S_\ell^{e_\ell} \bigoplus_{k_1 < \ell \le k_2} (S_\ell \oplus S_\ell^*)^{e_\ell} \bigoplus_{k_2 < \ell \le k_3}  (S_\ell \oplus S_\ell^*)^{e_\ell}\\
&= \bigoplus_{1 \le \ell \le k_1} \C^{e_\ell}\otimes S_\ell \bigoplus_{k_1 < \ell \le k_2} \C^{e_\ell}\otimes(S_\ell \oplus S_\ell^*) \bigoplus_{k_2 < \ell \le k_3}  \C^{e_\ell}\otimes (S_\ell \oplus S_\ell^*).
}
Every $S_\ell$ has a dimension vector which we will denote by $\alpha^\ell$. From the previous section we know that $S_\ell$ itself is $\eps$-commuting
for $\ell \le k_1$ so we can also find a $\gamma^\ell$ with $\bar \gamma^\ell=\alpha^\ell$. The same can be done for $S_\ell$ if $k_1<\ell\le k_2$, only
we have to keep in mind that these $\gamma^\ell$ will be $\eps^-$-commuting. Finally we set $\gamma^\ell:v \to \id_{\alpha^\ell_v}$ and
$\gamma^{\ell*}:v \to \eps_v\id_{\alpha^\ell_v}$ if $\ell > k_2$.
These will not give us a supermixed setting because it might be that $\alpha_v \ne \alpha_{\phi(v)}$.

These $\gamma^\ell$ can be used to construct the $\gamma$ for $W$:
\[
\gamma = \bigoplus_{1\le \ell \le k_1} \id_{e_{\ell}}\otimes \gamma^{\ell} \bigoplus_{k_1 < \ell \le k_2} \begin{pmatrix}0&-\id_{e_{\ell}}\\\id_{e_{\ell}}&0 \end{pmatrix}\otimes \gamma^\ell
\bigoplus_{k_2 < \ell \le k_3}\id_{\ell}\otimes \begin{pmatrix}0&\gamma^{\ell}\\\gamma^{\ell^*}&0 \end{pmatrix}
\]
If we introduce $\eps_p$ and $\gamma_p$ as they are defined in the theorem,
we can rewrite
\[
\gamma = \bigoplus_{1\le \ell \le k_2} \gamma_p(S_\ell)\otimes \gamma^\ell 
\bigoplus_{k_2 < \ell \le k_3}
\begin{pmatrix}0&\gamma_p(S_\ell)\otimes\gamma^\ell\\
\gamma_p(S_\ell^*)\otimes\gamma^{\ell^*}&0 \end{pmatrix}.
\]
The basis for which this expression holds has the form $b_{\ell\mu}\otimes c_{\ell v \nu}$ where $b_{\ell\mu}$ is the basis for $\C^{e_\ell}$
while $c_{\ell v \nu}$ forms the basis for $S_\ell (l\le k_1)$ or $(S_{\ell}\oplus S_{\ell}^*)$. We indexed the latter by two
indices: the vertices in $\Q$ and for every $v$ $\nu \in [1,\dots,\bar \gamma^\ell_v (+\gamma^{\ell*}_v)]$.
To every basis element corresponds a projection operator which we will denote by $\beta_{\ell\mu}\otimes \varsigma_{\ell v \nu} \in S_\gamma$.

Let $C(W)=\{a \in S_{\gamma}| aW=Wa\}$ be the centralizer of $W$.
By Schur's lemma and the fact that $\C Q$ only acts on the right hand part of the basis $b_{\ell\mu}\otimes c_{\ell\nu}$, we can conclude
that $C(W)$ looks like
\[
 \prod_{1\le \ell \le k_1}\Mat_{e_\ell} \otimes \id_{S_\ell}\prod_{k_1 < \ell \le k_2}\Mat_{2e_\ell} \otimes \id_{S_\ell}\prod_{k_2 < \ell \le k_3}\Mat_{e_\ell} \otimes \id_{S_\ell} \times \Mat_{e_\ell} \otimes \id_{S_\ell^*} 
\]
which is isomorphic with $S_{\gamma_p}$. The involution is also isomorphic to the one on $S_{\gamma_p}$:
\se{
s^*
&= \left(\bigoplus A_\ell \otimes \id_{S_{\ell}} \bigoplus_{\ell} B_\ell  \otimes \id_{S_{\ell}}   \bigoplus C_\ell  \otimes \id_{S^*_{\ell}} \right)^*\\
&= \bigoplus \gamma_{p\ell} A_\ell^\top \gamma_{p\ell}^{-1} \otimes \id_{e_{\ell}} \bigoplus_{\ell} C_\ell^\top\otimes\id_{S^*_\ell} \bigoplus B_\ell^\top  \otimes \id_{S_\ell}.
}.

To calculate the arrows in the local quiver we first need a lemma that deals with restriction of dualizing structures
\begin{lemma}
Let $(S,M)$ be any dualizing structure such that $S$ is isomorphic to the $S_{\gamma}$ from above and denote
the mixed quiver to which $M$ corresponds $Q_M$.
The structure of the restricted dualizing structure $(S_{\gamma_p},M)$ corresponds to a new quiver $Q_M'$ with
$2k_3-k_2$ vertices and
every arrow $a$ in $Q_M$ transforms for every pair of vertices $X,Y \in Q_{p0}$  into
$\alpha^X_h(a)  \alpha^Y_t(a)$ arrows from $Y$ and $X$. 
If $a$ is an (anti)-symmetric arrow and $X=Y^*$ then 
\[\frac{{\alpha^X_{h(a)}}^2 + \alpha^X_{h(a)}}2\] 
are of the same type as $a$ 
while the rest of the arrows ($\frac{{\alpha^X_{h(a)}}^2 - \alpha^X_{h(a)}}2$) are of the opposite type
of $a$.
\end{lemma}
\begin{proof}
We are interested in
the structure of the restricted dualizing structure $(S_{\alpha_p},M)$. Let $a$ be an arrow in $M$ and denote
its corresponding simple sub-$S_{\alpha}$-bimodule by $M_a$. As an $S_{\gamma_p}$ bimodule $M_a$ decompose as a direct sum of 
$S_{\gamma_p}$-bimodules
\se{
M_a &= 1 M_a 1^* \text{ ($1=\bigoplus \beta_{\ell\mu}\otimes\varsigma_{\ell v \nu}=\bigoplus \id_{e_\ell}\otimes\varsigma_{\ell v \nu}$)}\\
&=
\bigoplus_{\ell_1,\ell_2}\bigoplus_{\mu_1,\mu_2}  ( \id_{e_{ \ell_1}}\otimes\varsigma_{\ell_1 h(a)\mu_1}) M_a ( \id_{e_{\ell_2}}\otimes\varsigma_{\ell_2 t(a)\mu_2})^* 
}
Note that we only need the $\varsigma$'s for which the vertex $v=h(a),t(a)$ because the others act as zero on $M_a$.
All these components are simple $S_{\gamma_p}$-bimodules and hence
represent arrows from the vertex $Y=S_{\ell_2}$ to $X=S_{\ell_1}$.
So in total there are 
\[
\sum_a \alpha^{X}_{h(a)}  \alpha^{Y}_{t(a)}
\]
between $Y$ and $X$ in $Q'$.

Under the involution the only components which are mapped onto themselves
are the ones for which $a=\pm a^*, \ell_1=\ell_2(=\ell)$ and $\mu_1=\mu_2(=\mu)$.

The action under the involution on $x \in (\id_{e_\ell}\otimes \varsigma_{\ell h(a)\mu}) M_a (\id_{e_\ell}\otimes \varsigma_{\ell h(a)\mu})^*$ is given by 
\se{
x  \mapsto & \sigma_a \eps_{t(a)} \gamma  \left((\id_{e_\ell}\otimes\varsigma_{\ell h(a)\mu}) x (\id_{e_\ell}\otimes\varsigma_{\ell h(a)\mu})^*\right)^{-1}  g\\
&= \sigma_a \eps_{t(a)} \underbrace{(\id_{e_\ell}\otimes\varsigma_{\ell h(a)\mu}) \gamma  (\id_{e_\ell}\otimes\varsigma_{\ell h(a)\mu})^{*T}}_{\eps_{h(a)}\eps_{S_{\ell}}(\gamma_{\ell}\otimes \varsigma_{\ell h(a)\mu})} x^\top  
\underbrace{(\id_{e_\ell}\otimes\varsigma_{\ell h(a)\mu})^\top  \gamma^{-1} (\id_{e_\ell}\otimes\varsigma_{\ell h(a)\mu})^{*}}_{(\gamma_{\ell}^{-1}\otimes \varsigma_{\ell h(a)\mu}^*)}\\
&=  \sigma_{a} \underbrace{\eps_{h(a)}\eps_{t(a)}}_{=1}\eps_{S_{\ell}}  (\gamma_{\ell}\otimes \id) x (\gamma_{\ell}^{-1}\otimes \id) 
}
We can conclude that $(\alpha^{X}_{h(a)})^2-\alpha^{X}_{h(a)}$ of the arrows in which $a=\phi(a)$ decomposes 
are not mapped to themselves under the involution $M_b\ne M_b^*$. Taking linear combinations $b-b^*$ and $b+b^*$ we can make this  
equivalent to $\frac{(\alpha^{X}_{h(a)})^2-\alpha^{X}_{h(a)}}2$ symmetric and an equal number antisymmetric arrows.

The calculation above adds to these equal amounts $\alpha^{X}_{h(a)}$ more symmetric or antisymmetric depending on the type of $a$.
\end{proof}

As $S_{\gamma_p}$-bimodules with dualizing structure we have that $N_p = \Rep(\Q,\gamma)/T_W \GL_{\gamma} W$. Also we can identify
$T_W \GL_{\gamma} W$ with $S_{\gamma}/S_{\gamma_p}$ but this identification only works as bimodules without the dualizing structure. In order
to make it work with the dualizing structure we must put a new dualizing structure on $S_{\gamma}$ as an $S_{\gamma}$-bimodule. This is done by putting
$\star : S_\gamma \to S_{\gamma}: x \to -x^*$. This new structure turns the morphism
\[
\pi : S_{\gamma} \to T_W \GL_{\gamma} W: x \mapsto xW - Wx
\]
into a $*$-morphism: $\pi(x^\star) = x^\star W - Wx^\star = -x^*W+Wx^* =-x^*W^*+W^*x^*= \pi(x)^*$.
In quiver terminology $Q_{S_{\gamma}}$ is a quiver with the same number of vertices as $Q$ but with 
a unique loop in every vertex. The loops in orthogonal vertices are antisymmetric, while those
in symplectic vertices are symmetric.

So to determine the total number of arrows in $N_p$ from $Y$ to $X$ we can use the following formula
\se{
A_{XY}&:= \#\{X \ot  Y \text{ in $Q'_{N_p}$} \}\\
&= \#\{X \ot Y \text{ in $Q'_{\Rep(\Q,\gamma)}$} \}
- \#\{X \ot Y \text{ in $Q'_{S_{\gamma}}$} \}
+ \#\{X \ot Y \text{ in $Q'_{S_{\gamma_p}}$} \}\\
&= \sum_{a \in Q_1} \alpha^{X}_{h(a)}\alpha^{Y}_{t(a)} - \sum_{v \in Q_0} \alpha^{X}_{v}\alpha^{Y}_{v} + \delta_{XY}
}
To determine the number of symmetric arrows from $Y$ to $X$ we can do the same thing:
\se{
\#\{X \otsym Y \text{ in $Q'_{N_p}$} \}
&= \#\{X \otsym Y \text{ in $Q'_{\Rep(Q,\gamma)}$} \}
- \#\{X \otsym Y \text{ in $Q'_{S_{\gamma}}$} \}
+ \#\{X \otsym Y \text{ in $Q'_{S_{\gamma_p}}$} \}\\
&=\frac{A_{XY}+\sum_{a=\phi(a)}\sigma_a \alpha^{X}_{h(a)}- \sum_{v=\phi(v)}(-\eps_{v})\alpha^{X}_{v} + (-\eps_X)\delta_{XY}}2
}
for the antisymmetric arrows we obtain
\[
\frac{A_{XY}-\sum_{a=\phi(a)}\sigma_a \alpha^{X}_{h(a)}+ \sum_{v=\phi(v)}(-\eps_{v})\alpha^{X}_{v} - (-\eps_X)\delta_{XY}}2
 \]
\end{proof}

We will illustrate the theorem with a few examples.
\begin{itemize}
 \item Consider the setting 
\[
\xymatrix{
\vtx{2}\ar@{:>}@/^/[r]&\vtx{2}\ar@{:>}@/^/[l]
}
\]
where the two vertices are each others dual and all arrows are antisymmetric.
If we consider a representation for which every arrow is represented by $\Lambda_2$. This representation is
the direct sum of two isomorphic representation with dimension vector $1$. These two are each others dual so the local quiver setting is
\[
\xymatrix{
\svtx{2}\ar@{:>}@(lu,ru)^3
}
\]
with $3$ loops of which there are $\frac{3+(-4+1)}2=0$ symmetric arrows and $3$
\item
The setting
\[
\xymatrix{
\vtx{2}\ar@2@/^/[d]\\
\svtx{2}\ar@2@/^/[u]
}
\]
where the upper vertex is orthogonal and the lower vertex is symplectic. Now we look at the local quiver setting
of the representation that assigns to the two left arrows the identity matrices (and to the right arrows $-\Lambda_2$).

The stabilizer of this setting is $GL_1 \times GL_1$ and the local quiver setting looks like
\[
\xymatrix{
\vtx{1}\ar@{=>}@(lu,ld)_3\ar@{.>}@/^/[r] \ar@/^/@<1ex>[r]&\vtx{1}\ar@{.>}@/^/[l] \ar@/^/@<1ex>[l]\ar@{=>}@(ru,rd)^3
}
\]
~\\
As one can calculate the number of symmetric arrows from the left vertex to the right is $\frac{2 +0-0+0}2=1$
\end{itemize}

\section{Simples}

In this section we are going to 
describe a method to determine whether a given supermixed quiver setting $(\Q,\gamma)$ has a dualizing space,
$\DRep(\Q,\gamma)$, that contains simples and if it does
what kind of simples.

To answer this question we first need to recall when an ordinary quiver setting whose representation space simple representations.
This question was solved by Le Bruyn and Procesi in \cite{LBP}. To state this result we need some terminology.

A quiver setting $(Q,\alpha)$ is \emph{sincere} if all vertices have a nonzero dimension (i.e. $\forall v \in Q_0:\alpha_v>0$).
A quiver $Q$ is \emph{strongly connected} if for every pair of vertices $v,w \in Q_0$ there are paths $v\ot w$ and $w \ot v$.
The \emph{support} of a setting is the subquiver that contains only the vertices with $\alpha_v>0$. The support of a representation
is the same as the support of its setting.

\begin{theorem}[Le Bruyn, Procesi]\label{simple}
Let $(Q, \alpha)$ be a sincere quiver setting.
There exist simple representations of dimension vector $\alpha$ if and only if
\begin{itemize}
\item
If $Q$ is of the form
\[
\vcenter{\xymatrix@=.5cm{\vtx{~}}},\hspace{1cm}
\vcenter{\xymatrix@=.5cm{\vtx{~}\ar@(lu,ru)}}\hspace{.5cm}\text{ or }\hspace{.5cm}
\vcenter{\xymatrix@=.2cm{
&\vtx{~}\ar@{->}[rr] &&\vtx{~}\ar@{->}[rd] &\\
\vtx{~}\ar@{->}[ru] &&\#V\ge 2&&\vtx{~}\ar@{->}[ld] \\
&\vtx{~}\ar@{->}[lu] &&\vtx{~}\ar@{..}[ll]&\\
}}
\]
and $\alpha = 1$ (this is the constant map from the vertices to $1$).
\item
$Q$ is not of the form above, but strongly connected and
\[
\forall v \in V: \alpha_v \le \sum_{h(a)=v} \alpha_{t(a)} \text{ and }\alpha_v \le \sum_{t(a)=v} \alpha_{h(a)}
\]
\end{itemize}
If $(Q,\alpha)$ is not sincere, the simple representations classes
are in bijective correspondence to the simple representations classes
of its support.
\end{theorem}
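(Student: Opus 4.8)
The plan is to reduce the problem to the study of closed orbits and then invoke the local quiver machinery (Theorem \ref{local}) to control the geometry. First I would dispose of the non-sincere case: if $\alpha_v=0$ for some $v$, then no arrow incident to $v$ can play a role, and a representation $W$ of $(Q,\alpha)$ is literally a representation of the support quiver; so ``simple of dimension $\alpha$'' translates verbatim to ``simple of dimension $\alpha|_{\mathsf{supp}}$''. This is the last sentence of the statement and needs only the observation that the path algebra of $Q$ acts through the quotient that kills the zero-dimensional vertices.

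For the sincere case, the key dichotomy is: a representation $W$ is simple if and only if its $\GL_\alpha$-orbit is closed and has maximal possible dimension, equivalently $\dim \Rep(Q,\alpha) - \dim \GL_\alpha W = \dim\C[\iss(Q,\alpha)]$ together with the stabilizer being exactly $\C^*$ (the scalars). The forward direction I would get from the fact that simples have closed orbits (their semisimplification is themselves) and stabilizer $\C^*$ by Schur. For the converse I would use Theorem \ref{local}: if $W$ is semisimple with closed orbit and stabilizer $\C^*$, its local quiver $Q_p$ has a single vertex with dimension $1$, and counting loops via the arrow formula $\sum_{a}\alpha^1_{h(a)}\alpha^1_{t(a)} - \sum_v (\alpha^1_v)^2 + 1 = 1 - \chi_Q(\alpha,\alpha)$ gives the local picture; one then checks when such a one-vertex setting is consistent, i.e. when $W$ cannot be further decomposed. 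The strongly-connected condition enters because a simple representation must have strongly connected support (otherwise the ``sink'' part of a non-trivial strongly connected component decomposition gives a proper subrepresentation or quotient), and the inequalities $\alpha_v \le \sum_{h(a)=v}\alpha_{t(a)}$ and $\alpha_v\le\sum_{t(a)=v}\alpha_{h(a)}$ are exactly the conditions that prevent the obvious rank obstructions — if, say, $\alpha_v > \sum_{h(a)=v}\alpha_{t(a)}$, then the combined map $\bigoplus_{h(a)=v}\C^{\alpha_{t(a)}}\to\C^{\alpha_v}$ cannot be surjective, yielding a proper subrepresentation.

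The three exceptional shapes (a single vertex, a single vertex with a loop, and an oriented cycle of length $\ge 2$) with $\alpha=1$ are precisely the settings where the local quiver at a would-be simple is a single vertex with exactly one loop and the inequalities degenerate: here $1-\chi_Q(\alpha,\alpha)=1$, so there is room for a simple, but these are the only strongly connected sincere settings where the inequalities hold with $\alpha_v$ tight everywhere and yet the Euler form obstruction does not kill simplicity. I would treat them by direct inspection: for the single vertex, $\C^1$ is trivially simple; for the one-loop vertex, a nonzero scalar gives a simple; for the $m$-cycle with all dimensions $1$, the representation sending each arrow to $1\in\C^*$ is simple, and no larger dimension vector on a cycle is simple by the rank argument.

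The main obstacle I anticipate is the sufficiency direction: given the combinatorial hypotheses, one must actually \emph{construct} a simple representation, not merely show the obstructions vanish. The cleanest route is an induction on $\sum_v\alpha_v$ (or on the number of arrows), peeling off a vertex where an inequality is tight and using strong connectedness to reglue, or alternatively a dimension count showing that the generic representation in a component where the stabilizer is generically $\C^*$ must be simple — this last step is where one genuinely uses that $\Rep(Q,\alpha)$ is irreducible and that the locus of non-simple representations is a proper closed subset, cut out by the vanishing of the obstructions we have ruled out. Verifying that this ``bad locus'' is genuinely proper under exactly the stated inequalities is the technical heart, and it is essentially the content of the original Le Bruyn--Procesi argument, which I would cite for the delicate case analysis while giving the conceptual skeleton above.

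\eop
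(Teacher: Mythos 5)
First, a point of orientation: the paper does not actually prove Theorem \ref{simple} --- it is quoted from \cite{LBP} without proof --- and the closest thing to an internal argument is the later ``reformulation'' of the theorem as an algorithm, whose proof combines three ingredients: the observation that on a strongly connected quiver the dimension vector $\mathbf{1}$ representation with all arrows nonzero scalars is simple, the completion $\hat W = W \oplus \bigoplus_{v} T_v^{\oplus \alpha_v - \alpha^W_v}$, and Lemma \ref{localsimple}, which transports existence of simples back and forth along the \'etale isomorphism of Theorem \ref{local}. That gives a genuine inductive construction of simples of arbitrary dimension vector, which is exactly what your proposal is missing.

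Measured as a proof, your proposal has a real gap precisely where you flag it: the sufficiency direction. You never construct a simple representation (or show the locus of non-simple representations is a proper closed subset) under the hypotheses of the second bullet; you defer this ``technical heart'' to the original Le Bruyn--Procesi argument, so what remains is a proof of necessity plus a citation. Moreover, your use of Theorem \ref{local} in the converse is circular: the local quiver is attached to a semisimple point you already possess, so it cannot by itself certify that a simple of dimension vector $\alpha$ exists; to use it non-circularly you should argue as in the paper's algorithmic reformulation --- start from the $\mathbf{1}$-dimensional simple $W$ guaranteed by strong connectedness, pass to the local quiver setting of the completed representation $\hat W$, verify the hypotheses (or an exceptional shape) persist there, and induct, invoking Lemma \ref{localsimple} at each step. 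Two smaller inaccuracies: ``closed orbit of maximal possible dimension'' is not the right criterion (simple $\iff$ closed orbit and stabilizer exactly $\C^*$; maximality of the orbit dimension is neither needed nor known a priori), and the exclusion of $\alpha \neq \mathbf{1}$ on an oriented cycle or a one-loop vertex is not a rank argument --- for constant $\alpha_v = n \ge 2$ the inequalities $\alpha_v \le \sum_{h(a)=v}\alpha_{t(a)}$ are satisfied, and one must instead use that the loop matrix, respectively the composition of the maps around the cycle, has an eigenvector generating a proper subrepresentation. Your necessity arguments (support strongly connected; if $\alpha_v > \sum_{h(a)=v}\alpha_{t(a)}$ then $\bigl(\sum_{h(a)=v}\mathrm{im}\,W_a\bigr) \oplus \bigoplus_{w\neq v}\C^{\alpha_w}$ is a proper subrepresentation, and dually) are essentially fine.
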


In all cases except for the one vertex without loops there are an infinite number of isomorphism classes of simples with that dimension vector.
In the case of the one vertex $v$ without loops, there is one unique simple representation
$T_v$.

As stated above we have a nice numerical criterion to check whether a given setting has simples or not. The question
for supermixed quiver settings is a bit more complicated because we have $3$ types of simples. 
However it suffices to check for every $(\Q,\gamma)$ there exist orthogonal simples and if so
whether all simples in $\Rep(\Q,\gamma)$ are isomorphic to an orthogonal $\eps$-simple.
\begin{theorem}
The following pairs of statements are equivalent:
\begin{itemize}
 \item $\DRep(\Q,\Lambda_2\otimes \gamma)$ contains symplectic $\epsilon$-simples.
 \item $\DRep(\Q^-,\gamma)$ contains orthogonal $\epsilon$-simples.
\end{itemize}
 and
\begin{itemize}
 \item $\DRep(\Q,\gamma)$ contains general $\epsilon$-simples.
 \item One of the two possibilities below holds:
\begin{enumerate}
 \item $\gamma \cong \id_2\otimes \gamma'$ and $\Rep(\Q^+,\gamma')$ contains simples that are not isomorphic to an orthogonal $\eps+$-simple.
 \item There exists a dimension vector $\alpha$ with $\alpha+\alpha^*=\bar \gamma$ such that $\Rep(Q,\alpha)$ contains simples 
and $\alpha \ne \alpha^*$.
\end{enumerate}
\end{itemize}
\end{theorem}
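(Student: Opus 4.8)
The strategy is to read both equivalences off Theorem~\ref{symplectic}, supplemented by the converses of the two constructions performed in its proof, and to track dualizing data only through dimension vectors and the sign values at the fixed vertices and arrows, which by the redundancy results above is all that matters for the isomorphism class of a dualizing structure.

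\emph{First equivalence.} The implication ``symplectic $\Rightarrow$ orthogonal'' is Theorem~\ref{symplectic}(1): a symplectic $\eps$-simple $V\in\DRep(\Q,\Lambda_2\otimes\gamma)$ equals $W\oplus W^*$ with $W$ a simple $\C Q$-module and $W\cong W^*$, forcing $\alpha_W=\bar\gamma$, and that theorem equips $W$ with a $(-\eps)$-commuting form turning it into an orthogonal $\eps^-$-simple representation of $\Q^-$; the resulting datum has dimension vector $\bar\gamma$ and the $\Q^-$-signs are forced, so it lies in $\DRep(\Q^-,\gamma)$ up to isomorphism. For the converse I would invert the construction: given an orthogonal $\eps^-$-simple $W$ of $\Q^-$ with $(-\eps)$-commuting form $\langle,\rangle_1$, nondegeneracy yields $W\cong W^*$, and on $V:=W\oplus W^*$ I put
\[
\bigl\langle(w_1,w_1'),(w_2,w_2')\bigr\rangle:=\langle w_1,w_2'\rangle_1+\langle w_2,\eps w_1'\rangle_1 .
\]
One checks this is $\eps$-commuting, $*$-compatible and nondegenerate, so $V$ is $\eps$-mixed; it is $\C Q$-semisimple, hence $\eps$-simple once $\eps$-irreducible, and irreducibility holds because by Schur's lemma every embedded copy of $W$ in $W\oplus W^*$ has the form $w\mapsto(cw,d\theta w)$ with $\theta$ the isomorphism induced by $\langle,\rangle_1$, and $(-\eps)$-commutativity of $\langle,\rangle_1$ forces all of these to be totally isotropic --- this is exactly the computation $\langle(v,v),(w,w)\rangle=\langle v,w\rangle_1+\langle w,\eps v\rangle_1$ from Theorem~\ref{symplectic}(1), now equal to $0$ instead of $2\langle v,w\rangle_1$. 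Being of symplectic type, with dimension vector $2\bar\gamma$ and $\eps_v$-commuting at each vertex, $V$ then lies in $\DRep(\Q,\Lambda_2\otimes\gamma)$.

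\emph{Second equivalence.} Start from a general $\eps$-simple $V=W\oplus W^*$ in $\DRep(\Q,\gamma)$, $W$ simple with $W\not\cong W^*$, and put $\alpha:=\alpha_W$, so $\alpha^*=\alpha\circ\phi$ and $\alpha+\alpha^*=\bar\gamma$. If $\alpha\neq\alpha^*$, then $W$ is a simple representation of $Q$ with $\alpha_W=\alpha\neq\alpha^*$ and $\alpha+\alpha^*=\bar\gamma$, which is possibility~(2). If $\alpha=\alpha^*$, then $\bar\gamma=2\alpha$ and Theorem~\ref{symplectic}(2) turns $W$ into an almost $\eps^+$-mixed simple representation of $\Q^+$ not isomorphic to an $\eps^+$-mixed one; that $\gamma\cong\id_2\otimes\gamma'$ for the datum $\gamma'$ produced there follows from the shape of $\gamma$ on a general $\eps$-simple --- $W$ and $W^*$ are both totally isotropic and hyperbolically paired, so $\langle,\rangle$ is that pairing twisted by $\eps$ --- after comparing dimension vectors and vertex/arrow signs; this is possibility~(1). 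The converse directions are the matching reverse constructions: from the data of possibility~(2), $V:=S\oplus S^*$ with the hyperbolic pairing is automatically $\eps$-mixed, $\C Q$-semisimple, and general since $S\not\cong S^*$ (because $\alpha\neq\alpha^*$), with dimension vector $\bar\gamma$; from the data of possibility~(1), $W\oplus W^*$ with the pairing built from $\gamma'$ and twisted by $\eps$ and $\sigma$ is $\eps$-mixed, and ``$W$ not isomorphic to an $\eps^+$-mixed representation'' --- read as $W\not\cong W^*$ --- makes it general, while $\gamma\cong\id_2\otimes\gamma'$ supplies the match of dualizing data.

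\emph{Main obstacle.} The hard part is not representation-theoretic but the bookkeeping that keeps the three avatars of the dualizing datum --- $\gamma$, $\Lambda_2\otimes\gamma$, and $\id_2\otimes\gamma'$ --- synchronized under the passages $V\leftrightarrow W$, which halve or double the underlying space: one must verify vertex by vertex and arrow by arrow that the commuting sign of the bilinear form and the $*$-sign of the $\C Q$-action transform exactly as the recipes for $\Q^{\pm}$ and for the tensor factors $\Lambda_2,\id_2$ prescribe, so that the redundancy results can be applied. In particular the case $\alpha=\alpha^*$ of the second equivalence hinges on pinning down $\gamma$ at the fixed vertices and on reading ``not isomorphic to an orthogonal $\eps^+$-simple'' correctly as the condition $W\not\cong W^*$. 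Once that accounting is in place the two equivalences follow formally from Theorem~\ref{symplectic}, the structure theorem for $\eps$-simples above, and Schur's lemma.
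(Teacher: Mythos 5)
Your proof is correct and follows essentially the same route as the paper, whose entire proof is the single sentence ``These statements follow from theorem \ref{symplectic}.'' You go further than the paper by actually supplying the converse directions (the hyperbolic-double constructions and the isotropy check via the $(-\eps)$-commutation identity $\langle v,w\rangle_1+\langle w,\eps v\rangle_1=0$), which the paper leaves entirely implicit.
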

\begin{proof}
These statements follow from theorem \ref{symplectic}. 
\end{proof}

Given the solution to the first question, (are there orthogonal simples), the answer to the second questions is quite straitforward:
\begin{theorem}
If $\DRep(\Q,\gamma)$ contains orthogonal simples then every simple in $\Rep(\Q,\gamma)$ is isomorphic to an orthogonal simple
if and only if 
\[
\dim \iss(\Q,\gamma) =\dim \Diss(\Q,\gamma).
\]
This condition can be expressed numerically as
\[
\sum_{a \in Q_1} \alpha_{h(a)} \alpha_{t(a)} - \sum_{v \in Q_0} \alpha_v^2 +1 = \frac 12 \left( \sum_{a \in Q_1} \alpha_{h(a)} \alpha_{t(a)} - \sum_{v \in Q_0} \alpha_v^2 + \sum_{\phi(a)=a}\sigma_a \alpha_{h(a)} - \sum_{\phi(v)=v}\eps_{v} \alpha_{v}\right).
\]
with $\alpha=\bar \gamma$.
\end{theorem}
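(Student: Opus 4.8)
The plan is to interpret the two sides of the claimed equality as dimensions of the two quotient spaces $\iss(\Q,\gamma)$ and $\Diss(\Q,\gamma)$, and then translate the set-theoretic statement ``every simple in $\Rep(\Q,\gamma)$ is isomorphic to an orthogonal $\eps$-simple'' into the equality of these dimensions. First I would recall from Theorem \ref{local} (Le Bruyn--Procesi) that for a point $p$ corresponding to a simple $\C Q$-module $S$ of dimension $\alpha$, the local quiver $Q_p$ has one vertex and $\sum_a\alpha_{h(a)}\alpha_{t(a)}-\sum_v\alpha_v^2+1$ loops, so the component of $\iss(\Q,\gamma)$ through a generic simple has dimension $\sum_a\alpha_{h(a)}\alpha_{t(a)}-\sum_v\alpha_v^2+1$; this is the left-hand side. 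Similarly, from Theorem \ref{epslocal} applied to an orthogonal $\eps$-simple $S$, the local mixed quiver $\Q_p$ has one orthogonal vertex with a number of symmetric loops equal to $\tfrac12\bigl(A+\sum_{a=\phi(a)}\sigma_a\alpha_{h(a)}-\sum_{v=\phi(v)}\eps_v\alpha_v-1\bigr)$ where $A=\sum_a\alpha_{h(a)}\alpha_{t(a)}-\sum_v\alpha_v^2+1$; since the dualizing group at such a vertex is $O_1$, which is finite, $\Diss(\Q_p,\gamma_p)$ has dimension equal to this loop count, and this is exactly the right-hand side. So the numerical identity in the statement is literally $\dim\iss(\Q,\gamma)=\dim\Diss(\Q,\gamma)$, and I have reduced the theorem to the geometric claim.

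Next I would establish the geometric claim itself: under the standing hypothesis that $\DRep(\Q,\gamma)$ contains orthogonal simples, every simple $\C Q$-module in $\Rep(\Q,\gamma)$ is isomorphic to an orthogonal $\eps$-simple if and only if $\dim\iss(\Q,\gamma)=\dim\Diss(\Q,\gamma)$. The forgetful map $\pi:\Diss(\Q,\gamma)\to\iss(\Q,\gamma)$ comes from the commutative diagram of invariants in the Zubkov theorem quoted above. By the corollary that points of $\iss$ classify semisimple $\C Q$-modules and the corollary that points of $\Diss$ classify semisimple $\eps$-mixed modules, a point of $\iss(\Q,\gamma)$ lies in the image of $\pi$ precisely when the corresponding semisimple $\C Q$-module carries some $\eps$-mixed structure, i.e.\ when it is self-dual; and Theorem~\ref{epslocal} together with the decomposition of semisimple $\eps$-mixed modules into $\eps$-simples shows $\pi$ is finite-to-one over the open locus of orthogonal simples (there the fibre is a single point, since $O_1$ is finite and the orbit is closed). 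Hence $\pi$ restricted to the closure of the orthogonal-simple locus is a finite dominant morphism onto a component of $\iss(\Q,\gamma)$, forcing $\dim\Diss\le\dim\iss$ always, with equality iff the orthogonal-simple locus is dense in $\iss(\Q,\gamma)$. But $\iss(\Q,\gamma)$ is irreducible (it is a quotient of the irreducible variety $\Rep(Q,\bar\gamma)$), so density of the orthogonal-simple locus is equivalent to its being nonempty and of full dimension, which by the dimension computation of the previous paragraph is equivalent to the stated numerical equality; and a generic such point being orthogonal simple forces \emph{every} simple of dimension $\bar\gamma$ to be orthogonal $\eps$-simple, because the general and symplectic simples, if present, would contribute components of $\Diss$ of strictly smaller dimension (their $\eps$-automorphism groups $\GL_1$ and $\Sp_2$ are positive-dimensional, cutting the dimension down), so they cannot dominate the component.

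The main obstacle I anticipate is making the last dimension comparison fully rigorous: one must check that a general simple $\C Q$-module of dimension $\bar\gamma$ cannot be general or symplectic $\eps$-simple \emph{when} the numerical equality holds. The clean way is to compute, via Theorem~\ref{symplectic}, the dimension of the locus in $\Diss(\Q,\gamma)$ coming from symplectic simples as $\dim\Diss(\Q^-,\gamma')$ for the halved setting, and from general simples as $\dim\iss(Q,\alpha)$ for $\alpha\ne\alpha^*$ with $\alpha+\alpha^*=\bar\gamma$; in both cases the dualizing (resp.\ automorphism) group is positive-dimensional, so one gets a strict inequality against $\dim\iss(\Q,\gamma)$ by the explicit Euler-form expressions, except possibly in borderline small cases (the one-vertex-no-loops case, where there is a unique simple, must be handled separately — but there orthogonality is immediate). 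Once that strict inequality is in hand, the equivalence follows: equality of dimensions forces the orthogonal stratum to be the dense one, hence forces every simple to be orthogonal, and conversely. I would close by simply substituting the formulas from Theorems~\ref{local} and~\ref{epslocal} to obtain the displayed numerical criterion verbatim.
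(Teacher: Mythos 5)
Your overall skeleton --- interpret the two sides as $\dim \iss(\Q,\gamma)$ and $\dim\Diss(\Q,\gamma)$ and then prove the geometric equivalence --- is the same as the paper's, and computing the two dimensions through the local (mixed) quiver settings at a simple, respectively an orthogonal $\eps$-simple, point is a legitimate variant of the paper's direct count $\dim\iss=\dim\Rep-\dim\GL_\gamma+\dim\stab(\text{simple})$ and $\dim\Diss=\dim\DRep-\dim\DGL_\gamma+\dim\stab(\text{orth.\ simple})$; under the standing hypothesis that orthogonal $\eps$-simples exist this is justified because $\DRep(\Q,\gamma)$ is a linear subspace, hence irreducible, so the orthogonal-simple locus is open and dense in $\Diss(\Q,\gamma)$. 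The direction ``every simple orthogonal $\Rightarrow$ equal dimensions'' you argue essentially as the paper does, via openness of the simple locus in $\iss(\Q,\gamma)$. (One caveat: the symmetric-loop count you quote carries the opposite sign on the $\sum_{\phi(v)=v}\eps_v\alpha_v$ term from the formula stated in Theorem~\ref{epslocal}; since all you need is $\dim\Diss(\Q,\gamma)$, the safer route to the displayed right-hand side is the direct computation of $\dim\DRep-\dim\DGL_\gamma$, the generic stabilizer $O_1$ being finite.)

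The genuine gap is in the converse implication, exactly the step you flag as the main obstacle, and the repair you propose would not work. You want to deduce ``every simple of dimension vector $\bar\gamma$ is an orthogonal $\eps$-simple'' from density of the orthogonal-simple locus by comparing dimensions of the strata of $\Diss(\Q,\gamma)$ coming from symplectic and general $\eps$-simples. But a simple $\C Q$-module $S$ in $\Rep(\Q,\gamma)$ that fails to be orthogonal is either not self-dual, or self-dual carrying only a $(-\eps)$-structure; in both cases it contributes no point of $\Diss(\Q,\gamma)$ whatsoever, so no dimension count of strata of $\Diss$ can detect or exclude it. Conversely, the symplectic and general $\eps$-simples that do lie in $\Diss(\Q,\gamma)$ have underlying module $W\oplus W^*$, which is never simple, so they are irrelevant to the claim about simples in $\Rep(\Q,\gamma)$. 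The paper's argument is shorter and closes the hole: by Zubkov's theorem the restriction $\C[\Rep(Q,\gamma)]^{\GL_\gamma}\to\C[\DRep(\Q,\gamma)]^{\DGL_\gamma}$ is onto, so $\Diss(\Q,\gamma)$ is a closed subvariety of $\iss(\Q,\gamma)$; since $\iss(\Q,\gamma)$ is irreducible, equality of dimensions forces $\Diss(\Q,\gamma)=\iss(\Q,\gamma)$, hence every semisimple module of dimension vector $\bar\gamma$ --- in particular every simple one --- underlies an $\eps$-mixed module, and a simple $\C Q$-module admitting an $\eps$-mixed structure is by definition an orthogonal $\eps$-simple. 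Replace your stratum comparison (and the accompanying borderline-case analysis via Theorem~\ref{symplectic}) by this closedness-plus-irreducibility argument and the proof is complete.
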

\begin{proof}
Because $\Diss(\Q,\gamma)$ is a closed subset of $\iss(\Q,\gamma)$, the dimension condition implies that both spaces are identical. This means that
every representation in $\Rep(\Q,\gamma)$ is isomorphic to a representation in $\DRep(\Q,\gamma)$. A simple representation that is identical to
its dual is an orthogonal simple. 

Now vice versa suppose every simple in $\iss(\Q,\gamma)$ is isomorphic to an orthogonal then we know that as the simples form a open subset of
$\iss(\Q,\gamma)$, the dimension of $\Diss(\Q,\gamma)$ must be at least the dimension of $\iss(\Q,\gamma)$.

The numerical condition comes from the calculations
\se{
\dim \iss(Q,\alpha)&= \dim \Rep(Q,\alpha) - \dim GL_{\alpha} + \dim \stab(\text{simple})\\
&=\sum_{a \in Q_1} \alpha_{h(a)} \alpha_{t(a)} - \sum_{v \in Q_0} \alpha_v^2 +1\\
\dim \Diss(\Q,\gamma)&=\dim \DRep(\Q,\gamma) - \dim \DGL_{\gamma} + \dim \stab(\text{orth. simple})\\
&=\frac 12 \left( \sum_{a \in Q_1} \alpha_{h(a)} \alpha_{t(a)} + \sum_{\phi(a)=a}\sigma_a \alpha_{h(a)} - \sum_{v \in Q_0} \alpha_v^2 - \sum_{\phi(v)=v}\eps_{v} \alpha_{v}\right).
}
\end{proof}

The ideal answer to the existence question for simples would be a numerical criterion like the one from \cite{LBP}. This becomes 
a combinatorial nightmare because the three different vertex types give different conditions. Therefore we will give an algorithmic approach.
To clarify what is meant by that we will first transform the ordinary theorem into an algorithmic result. 

Before we proceed to prove this algorithm works we need a lemma
\begin{lemma}\label{localsimple}
If $W$ is a semisimple representation in $\Rep(Q,\alpha)$ and $(Q_L,\alpha_L)$ is its local quiver setting then
$\Rep(Q,\alpha)$ contains simple representations if and only if $\Rep(Q_L,\alpha_L)$ contains simples
\end{lemma}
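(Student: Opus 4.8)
The plan is to exploit the Luna slice theorem (Theorem~\ref{local}), which already gives an étale isomorphism between a neighbourhood of the zero representation in $\iss(Q_L,\alpha_L)$ and a neighbourhood of the point $p\in\iss(Q,\alpha)$ corresponding to $W$. The key observation is that ``having a simple representation of dimension vector $\alpha$'' is equivalent to ``the generic representation in $\Rep(Q,\alpha)$ is simple'', which in turn is equivalent to a statement about the dimension of a single fibre of the quotient map, namely that the fibre over a generic point of $\iss(Q,\alpha)$ is a single closed orbit of dimension $\dim\GL_\alpha - \dim\stab$. Since the Luna slice theorem is an \'etale-local isomorphism of the \emph{pairs} $(\Rep,\GL)$ up to the associated fibre bundle construction $U_0\times_{G_v}G$, these orbit-dimension data are preserved when passing from a neighbourhood of $p$ to a neighbourhood of $0$ in $\iss(Q_L,\alpha_L)$.

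First I would reduce to the sincere case: by the last sentence of Theorem~\ref{simple}, replacing $(Q,\alpha)$ and $(Q_L,\alpha_L)$ by their supports does not change the existence of simples, and taking support commutes in the obvious way, so we may assume both settings are sincere. Next I would recall the standard fact (essentially due to Le Bruyn--Procesi and provable from Theorem~\ref{simple} together with semicontinuity of fibre dimension) that $\Rep(Q,\alpha)$ contains a simple representation if and only if the generic fibre of $\Rep(Q,\alpha)\to\iss(Q,\alpha)$ has dimension $\dim\GL_\alpha - 1$, equivalently if and only if $\dim\iss(Q,\alpha)=\dim\Rep(Q,\alpha)-\dim\GL_\alpha+1$. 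The same criterion applies verbatim to $(Q_L,\alpha_L)$.

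Then I would invoke the Luna slice theorem. It produces \'etale neighbourhoods $U_W$ of $W$ in $\Rep(Q,\alpha)$ and $U_0$ of $0$ in the normal space $N_W$, and a commutative diagram identifying $U_W/\!\!/\GL_\alpha$ with $U_0/\!\!/(\GL_\alpha)_W$ \'etale-locally over both $\iss(Q,\alpha)$ and $\iss(Q_L,\alpha_L)$ (recall $N_W/\!\!/(\GL_\alpha)_W$ is precisely $\iss(Q_L,\alpha_L)$ near zero, with $(\GL_\alpha)_W\cong\GL_{\alpha_L}$). Since \'etale maps are flat with zero-dimensional fibres, the dimension of $\iss(Q,\alpha)$ near $p$ equals the dimension of $\iss(Q_L,\alpha_L)$ near $0$, and likewise $\dim\Rep(Q,\alpha)$ near $W$ differs from $\dim N_W$ exactly by $\dim T_W\GL_\alpha W = \dim\GL_\alpha-\dim(\GL_\alpha)_W = \dim\GL_\alpha-\dim\GL_{\alpha_L}$. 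Plugging these into the numerical criterion of the previous paragraph, one sees that $\dim\iss(Q,\alpha)=\dim\Rep(Q,\alpha)-\dim\GL_\alpha+1$ holds near $p$ if and only if $\dim\iss(Q_L,\alpha_L)=\dim N_W-\dim\GL_{\alpha_L}+1$ holds near $0$; but $\iss$ of a quiver setting is irreducible, so ``holds near a point'' is the same as ``holds'', and we conclude.

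The main obstacle I anticipate is justifying the equivalence ``$\Rep(Q,\alpha)$ contains a simple $\iff$ the generic fibre of the quotient map has maximal dimension $\iff$ the displayed dimension equality holds''. The forward direction is easy (a simple representation has a closed orbit of the stated dimension, and the simple locus is open so the generic fibre contains such an orbit), but the converse needs a little care: one must rule out the possibility that the generic fibre has the right dimension without containing a simple, which can be done by noting that the generic fibre is a single closed orbit (since $\iss$ parametrizes closed orbits bijectively on a dense open set) and a representation with trivial-enough stabilizer whose orbit is closed of codimension one is forced to be simple by Theorem~\ref{simple}, or alternatively by the standard argument that the dimension equality forces the generic closed orbit to have a reductive stabilizer of the dimension of a torus acting on a sum of pairwise non-isomorphic simples, which by a counting argument must be a single simple. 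Once this bookkeeping lemma is in place, the rest is a routine dimension count threaded through the Luna diagram.
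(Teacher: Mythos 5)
There is a genuine gap: your whole reduction rests on the claimed equivalence ``$\Rep(Q,\alpha)$ contains a simple $\iff$ the generic fibre of $\Rep(Q,\alpha)\to\iss(Q,\alpha)$ has dimension $\dim\GL_\alpha-1$ $\iff$ $\dim\iss(Q,\alpha)=\dim\Rep(Q,\alpha)-\dim\GL_\alpha+1$'', and only the forward implication is true. Take the quiver with two vertices and a single arrow $a:v_1\to v_2$ and $\alpha=(1,1)$ (sincere, so your reduction to the support does not help). Then $\Rep(Q,\alpha)=\C$, $\GL_\alpha=\C^*\times\C^*$, there are no cycles, so $\iss(Q,\alpha)$ is a point; hence $\dim\iss=0=\dim\Rep-\dim\GL_\alpha+1$ and the generic fibre is all of $\C$, of dimension $1=\dim\GL_\alpha-1$. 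Yet there are no simples of dimension $(1,1)$, since the quiver is not strongly connected (the head vertex carries a proper subrepresentation). The repairs you sketch do not close this hole: it is false that ``the generic fibre is a single closed orbit'' (in the example the generic fibre is the closure of a non-closed $1$-dimensional orbit together with $\{0\}$), and the dimension equality does not force the generic closed orbit to have a one-dimensional stabilizer (in the example the unique closed orbit is $\{0\}$ with stabilizer all of $\GL_\alpha$). The underlying problem is that $\dim\iss(Q,\alpha)$ records the dimension of the generic fibre, not of the generic \emph{closed} orbit, so no dimension count on $\iss$ alone can detect simplicity; consequently transporting the dimension equality through the Luna diagram, even though that bookkeeping itself is correct ($\dim N_W-\dim\GL_{\alpha_L}=\dim\Rep(Q,\alpha)-\dim\GL_\alpha$), does not prove the lemma.

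The paper's proof avoids dimensions altogether: a point of $\Rep(Q,\alpha)$ is simple exactly when its orbit is closed and its stabilizer is the one-dimensional group of scalars, the Luna slice \'etale isomorphism preserves closedness of orbits and (conjugacy classes of) stabilizers, and the simple loci are Zariski open, so existence of simples transfers between the neighbourhood of $p$ and the neighbourhood of $0$; one then passes from ``somewhere'' to ``in the neighbourhood'' using irreducibility of $\iss(Q,\alpha)$ on one side and the $\C^*$-scaling action contracting $\iss(Q_L,\alpha_L)$ to $0$ (which preserves simplicity) on the other. If you want to salvage a numerical flavour, the invariant you should transport is the stabilizer dimension of the generic \emph{closed} orbit, not $\dim\iss$; that is precisely what the preservation of stabilizers accomplishes.
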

\begin{proof}
The simples in $\Rep(Q,\alpha)$ or in
 $\Rep(Q_L,\alpha_L)$ form a Zariski open part and the \'etale isomorphism will map simples to simples because it preserves
the closedness of orbits and the size of the stabilizers.
\end{proof}
If $W$ is not a representation with dimension vector $\alpha$ but $\alpha^W\le \alpha$ then we can still consider a local quiver setting of
\emph{the completed representation} $\hat W$ which is obtained by adding the standard simples
\[
\hat W := W \oplus \bigoplus_{v \in Q_0}T_v^{\oplus \alpha_v-\alpha^W_v}.
\]

\begin{theorem}[reformulation of theorem \ref{simple}]
To check whether $\Rep(Q,\alpha)$ contains simples follow the following algorithm. 
\begin{itemize}
 \item[S0]
If $(Q,\alpha)$ is not sincere then $\Rep(Q,\alpha)$ contains simples if and only if $\Rep(Q',\alpha')$ does. Here $(Q',\alpha')$ is support
of $(Q,\alpha)$.
 \item[S1]
If $\forall v \in Q_0: \alpha_v\ne 0$ and $Q$ is not strongly connected then $\Rep(Q,\alpha)$ does not contain simples.
If $Q$ has one vertex and no arrows then $\Rep(Q,\alpha)$ contains no simples if $\alpha>1$.
 \item[S2]
If $\alpha=1$ then $\Rep(Q,\alpha)$ contains simples if and only if $Q$ is strongly connected.
\item[S3]
If $\alpha\ne 1$ and $Q$ is strongly connected, then by S2, there exists a simple $W$ with dimension vector $1$. 
$\Rep Q,\alpha$ contains simples if and only if the local quiver setting $(Q_L,\alpha_L)$ of the completed representation $\hat W$
contains simples.
\end{itemize}
\end{theorem}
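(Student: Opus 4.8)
The plan is to verify that the four-step algorithm S0--S3 correctly decides the existence of simples by checking that each step is either an immediate instance of Theorem~\ref{simple} or a legitimate reduction that strictly simplifies the problem while preserving the answer. First I would dispose of S0: this is precisely the final clause of Theorem~\ref{simple}, which says the simple representation classes of a non-sincere setting are in bijection with those of its support, so in particular one side contains simples iff the other does. Next, S1 collects the two ``negative'' cases from Theorem~\ref{simple}: if $Q$ is sincere but not strongly connected then the theorem forbids simples (a simple representation must have strongly connected support, and here the support is all of $Q$); and the one-vertex-no-loops quiver only carries a simple when $\alpha=1$, since the unique simple $T_v$ has dimension $1$. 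Step S2 is again a direct reading of Theorem~\ref{simple}: for the constant dimension vector $\alpha=1$ the numerical inequalities $\alpha_v\le\sum_{h(a)=v}\alpha_{t(a)}$ and $\alpha_v\le\sum_{t(a)=v}\alpha_{h(a)}$ become ``every vertex has an incoming and an outgoing arrow'', which for a connected quiver is automatic once we know it is strongly connected; and the exceptional shapes in Theorem~\ref{simple} (single vertex, single loop, the oriented cycle of extended type) are all strongly connected and carry simples at $\alpha=1$. So in every case S2 returns the right answer, and moreover it guarantees the existence of a concrete simple $W$ of dimension vector $1$ that S3 can use.

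The substance is in S3. Here $\alpha\neq 1$ and $Q$ is strongly connected, so by S2 there is a simple $W$ with dimension vector $\mathbf{1}\le\alpha$. Form the completed representation $\hat W = W\oplus\bigoplus_{v}T_v^{\oplus(\alpha_v-1)}$, which is a semisimple representation of dimension vector exactly $\alpha$, and let $(Q_L,\alpha_L)$ be its local quiver setting as produced by Theorem~\ref{local}. By Lemma~\ref{localsimple}, $\Rep(Q,\alpha)$ contains simples if and only if $\Rep(Q_L,\alpha_L)$ does, which is exactly what S3 asserts. To see that this is a genuine reduction --- i.e.\ that the recursion terminates --- I would argue that passing to the local quiver of $\hat W$ strictly decreases a suitable complexity measure, for instance $\sum_v \alpha_v$ (or the lexicographic pair (number of vertices, $\sum_v\alpha_v$)): the local setting has $\alpha_L$ assigning to each simple factor its multiplicity in $\hat W$, so the factor $T_v$ appearing $\alpha_v-1$ times contributes $\alpha_v-1$, and the single copy of $W$ contributes $1$; since $W$ itself has dimension $\sum_v 1 = \#Q_0 \ge 1$ and $\alpha\ne 1$, one checks $\sum_x(\alpha_L)_x = 1 + \sum_v(\alpha_v-1) = \sum_v\alpha_v - \#Q_0 + 1 < \sum_v\alpha_v$ whenever $\#Q_0\ge 2$, while if $\#Q_0=1$ the quiver is a single vertex with loops and one is immediately in an exceptional or easily-decided case. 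Thus repeated application of S0--S3 terminates.

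The main obstacle I anticipate is the termination/well-foundedness bookkeeping in S3 together with the edge cases where the local quiver $(Q_L,\alpha_L)$ is itself non-sincere or of one of the exceptional shapes: one must check that feeding such a $(Q_L,\alpha_L)$ back into S0--S2 lands correctly, and in particular that the local quiver can legitimately fail to be strongly connected or sincere even though the original $Q$ was strongly connected and $\alpha$ sincere. The clean way to handle this is to note that S0 absorbs non-sincerity and S1--S2 absorb the base cases, so the only thing that genuinely needs proof is the strict decrease of the complexity measure, which follows from the arrow/vertex count in Theorem~\ref{local} as sketched above. Everything else is a matter of matching each algorithmic branch against the corresponding clause of Theorem~\ref{simple} and invoking Lemma~\ref{localsimple}.
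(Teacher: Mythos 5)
Your proposal follows the paper's proof almost verbatim in structure: S0 and S1 are read off from Theorem \ref{simple}, S2 holds because a sincere setting with $\alpha=1$ has simples exactly when $Q$ is strongly connected (the paper just exhibits the simple directly by putting nonzero scalars on all arrows), and S3 is exactly Lemma \ref{localsimple} applied to the completed representation $\hat W$. The only point where you diverge, and where your write-up is slightly loose, is termination. Your measure $\sum_v\alpha_v$ indeed drops by $\#Q_0-1$ per S3-step, so it is not strict when $\#Q_0=1$; your parenthetical fallback, the lexicographic pair (number of vertices, $\sum_v\alpha_v$), does not work at all, since the number of vertices can \emph{increase} under an S3-step (the local quiver of $\hat W$ has one vertex for $W$ plus one for each $T_v$ with $\alpha_v\ge 2$); and the claim that the one-vertex case is ``immediately decided'' is not how the algorithm runs---S3 recurses on a single vertex with $k\ge 2$ loops just like any other setting. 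The gap is easily closed: either observe that $\#Q_0=1$ can occur only at the very first step (since $\alpha\ne 1$ forces the local quiver to have at least two vertices), after which your sum strictly decreases, or use the paper's measure, namely that an S3-step lowers the dimension of every existing vertex by one and only adds a vertex of dimension $1$, so the maximal vertex dimension strictly decreases. With that repair your argument is complete and coincides with the paper's.
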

\begin{proof}
The step S0 is trivial. Step S1 follows from the fact that every strongly connected component gives rise to a subrepresentation. 
If $Q$ is strongly connected every $\alpha=\id$-dimensional representation that assigns to every arrow a nonzero scalar is simple (S2).
Step S3 is a consequence of lemma \ref{localsimple}

Finally we have to show that the recursion will end in a finite number of steps. This is because every $S3$-step reduces the dimension of all vertices with one and adds one vertex with dimension $1$.
\end{proof}

Now we want to generalize the theorem above to the case of orthogonal simples.
First we can generalize the lemma about the local quivers:
\begin{lemma}\label{localmixedsimple}
If $W$ is a semisimple $\eps$-mixed representation in $\DRep(\Q,\gamma)$ and $(Q_L,\gamma_L)$ is its local quiver setting then
$\DRep(Q,\gamma)$ contains simple representations if and only if $\DRep(\Q_L,\gamma_L)$ contains simples
\end{lemma}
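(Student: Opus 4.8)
The plan is to run the argument of Lemma~\ref{localsimple} through the $\eps$-mixed slice theorem (Theorem~\ref{epslocal}) rather than the ordinary one. First I would recall that Theorem~\ref{epslocal} produces an \'etale isomorphism between an open neighbourhood of the zero representation in $\Diss(\Q_L,\gamma_L)$ and an open neighbourhood of the point $p\in\Diss(\Q,\gamma)$ corresponding to the semisimple $\eps$-mixed module $W$, and moreover that this local picture is induced by the Luna slice applied to the $\DGL_\gamma$-action on $\DRep(\Q,\gamma)$ at the point $W$ (whose $\DGL_\gamma$-orbit is closed, being the intersection of a closed $\GL_\gamma$-orbit with $\DRep(\Q,\gamma)$, by the Corollary to Zubkov's theorem). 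So there is an \'etale $\DGL_\gamma$-equivariant map from a neighbourhood $U_0\subset N_p$ of $0$ into $\DRep(\Q,\gamma)$, and $N_p\cong\DRep(\Q_L,\gamma_L)$ as a dualizing structure for the stabilizer $\DGL_{\gamma_L}$.

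Next I would argue that this correspondence carries $\eps$-simple modules to $\eps$-simple modules in both directions. The key observation is the same one used in Lemma~\ref{localsimple}: an \'etale $G$-equivariant map preserves closedness of orbits and the dimensions of stabilizers, and hence maps semisimple points to semisimple points, so a point in $\DRep(\Q_L,\gamma_L)$ is $\eps$-simple (that is, $\eps$-irreducible \emph{and} semisimple as a $\C Q_L$-module, equivalently has closed orbit and minimal-dimensional stabilizer $O_1$, $\Sp_2$, or $\GL_1$) exactly when its image is. Since the $\eps$-simple representations form a Zariski-open subset of $\DRep(\Q_L,\gamma_L)$ (the locus where the stabilizer has minimal dimension and the orbit is closed is open), and likewise in $\DRep(\Q,\gamma)$, the \'etale map identifies these open loci locally near $0$; in particular one is nonempty near $0$ iff the other is. Because an open dense subset of a representation space is nonempty iff the space contains \emph{any} simple of the relevant flavour, this yields the equivalence ``$\DRep(\Q,\gamma)$ contains (orthogonal, resp. symplectic, resp. general) $\eps$-simples $\iff$ $\DRep(\Q_L,\gamma_L)$ contains them''. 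For the statement as phrased --- existence of simple representations --- I would combine the three flavours: $\DRep$ contains an $\eps$-simple of some type iff $\DRep_L$ does, and since every semisimple $\eps$-mixed module is a direct sum of $\eps$-simples, $\DRep$ contains a \emph{simple} (as a $\C Q$-module, or equivalently an orthogonal $\eps$-simple) precisely when the relevant open locus is nonempty.

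The main obstacle I expect is bookkeeping rather than conceptual: one must check that the identification $N_p\cong\DRep(\Q_L,\gamma_L)$ coming from Theorem~\ref{epslocal} is compatible not just as $\DGL_{\gamma_L}$-spaces but as dualizing structures, so that ``$\eps$-simple'' really is transported to ``$\eps_L$-simple'' with the types (orthogonal/symplectic/general) matching up --- but this is exactly what the construction of $(\Q_L,\gamma_L)$ in the proof of Theorem~\ref{epslocal} establishes, where the new mixing data $\eps_p,\gamma_p$ were designed so that the local involution agrees with the induced one. A secondary subtlety is that, as noted after the $\vtx{1}\to\vtx{1}$ example, $\eps$-irreducible does not imply semisimple; so one must be careful to phrase everything in terms of $\eps$-simple (= $\eps$-irreducible and semisimple) modules, for which orbits are closed, in order to legitimately invoke the Luna slice machinery and the orbit-preservation property of \'etale maps. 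Once that is pinned down the proof is a direct transcription of Lemma~\ref{localsimple}.
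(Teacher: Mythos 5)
Your proposal is correct and follows essentially the same route as the paper: the paper's proof is literally ``analogous to the ordinary case,'' i.e.\ it invokes the \'etale isomorphism of Theorem~\ref{epslocal} together with the facts that the (\(\eps\)-)simples form a Zariski-open locus and that the \'etale map preserves closedness of orbits and stabilizer sizes, which is exactly the argument you transcribe. The extra care you take about types of \(\eps\)-simples and about distinguishing \(\eps\)-irreducible from \(\eps\)-simple is consistent with, and only elaborates on, the paper's intended proof.
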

\begin{proof}
Analogous to the ordinary case.
\end{proof}

We also need some lemmas about supermixed settings with special dimension vectors.

\begin{lemma}\label{cykel}
Suppose $\Q$ is a supermixed quiver that can be written as $c \cup c^*$ where $c$ is a cycle.
Then we can find an $\eps$-mixed  $\eps$-simple representation $W$ such that its dimension vector
is $1$ or $2$.
\end{lemma}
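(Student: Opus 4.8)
The plan is to distinguish the two structural cases that can occur for a supermixed quiver $\Q=c\cup c^*$ where $c$ is a cycle, according to whether $c$ and $c^*$ share vertices or not, and in each case to write down an explicit $\eps$-mixed representation that is $\eps$-simple.

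First I would treat the case $c\cap c^*=\emptyset$, i.e.\ $c$ and $c^*$ have disjoint vertex sets (so $\phi$ swaps the two cycles and no vertex or arrow is $\phi$-fixed). Put dimension vector $\bar\gamma=1$ on every vertex. Then $W\oplus W^*$, where $W$ is the representation of the cycle $c$ sending every arrow to a nonzero scalar (a simple $\C Q$-module by step S2 of the reformulated Le~Bruyn--Procesi theorem, applied to the strongly connected quiver $c$), is a general $\eps$-simple module of dimension vector $1$: it is semisimple, $W$ and $W^*$ are supported on disjoint vertices hence non-isomorphic, and it has no nonzero proper $\eps$-submodule since any such would have to be a sum of copies of $W$ and $W^*$ and neither of those is $\eps$-nondegenerate on its own.

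The interesting case is when $c$ and $c^*$ do meet, so that some vertices, and possibly some arrows, are $\phi$-fixed. Here I would still try $\bar\gamma=1$ on every vertex (so $\gamma_v=1$ at each fixed vertex, forcing $\eps_v=1$, an orthogonal vertex) and let $W$ be the representation of the underlying quiver $Q=c\cup c^*$ that assigns a generic nonzero scalar to every arrow; the underlying quiver is strongly connected (both $c$ and $c^*$ pass through the shared vertices), so $W$ is a simple $\C Q$-module by S2. The point is then to choose the scalars $W_a$ compatibly with the $*$-operation so that $W$ becomes an actual point of $\DRep(\Q,1)$, i.e.\ so that $W^*=W$; by Theorem on $\eps$-mixed modules this is exactly the statement that $W$ carries a nondegenerate $\eps$-commuting $*$-compatible form, and since $W$ is already simple it is automatically $\eps$-irreducible, hence orthogonal $\eps$-simple of dimension vector $1$. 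If at a shared vertex the form $\gamma_v=1$ cannot be made to work because two arrows of $c$ and $c^*$ are identified in an obstructing way, one doubles that vertex to dimension $2$ and uses $\gamma_v=\id_2$ (resp.\ $\Lambda_2$), which is where dimension vector $2$ enters; the worst sub-case is a primitive self-dual cycle, which one checks directly.

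The main obstacle is precisely this last compatibility bookkeeping: making $W$ invariant under $*$ means solving, for each $\phi$-orbit of arrows, an equation of the form $W_a=\sigma_a\eps_{h(a)}\gamma_{t(a)}^{\top}W_{\phi(a)}^{\top}\gamma_{h(a)}$, and when $a=\phi(a)$ this becomes a symmetry constraint on a single scalar (or $1\times1$/$2\times2$ block) whose solvability depends on the sign $\sigma_a$ and on the chosen $\gamma$ at the endpoints. I would handle this by going around the cycle $c$ once, fixing the scalars on the arrows of $c$ freely and then reading off the forced values on $c^*$ from the $*$-relation, and checking that at each $\phi$-fixed vertex or arrow the resulting constraint is consistent after possibly passing from dimension $1$ to dimension $2$; the finiteness and genericity arguments are routine, so the real content is verifying that this finite list of sign constraints never forces $W_a=0$ on a cycle and hence never destroys simplicity.
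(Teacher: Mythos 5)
Your first two cases do match the paper: for $c\cap c^*=\emptyset$ the paper also assigns nonzero scalars and obtains a general $\eps$-simple of dimension vector $1$, and when the cycles meet but there are no symplectic vertices and no antisymmetric fixed arrows the paper likewise takes the all-ones dimension-$1$ representation and gets an orthogonal $\eps$-simple. The genuine gap is the remaining case, which is precisely the one the lemma's ``or $2$'' exists for: when $\Q$ has a $\phi$-fixed symplectic vertex or a $\phi$-fixed antisymmetric arrow, no dimension-$1$ solution exists at all, and your proposal never actually constructs the dimension-$2$ representation or verifies its $\eps$-simplicity. Concretely, $\eps$ is part of the data of $\Q$, not something ``forced'' by choosing $\gamma_v=1$: at a fixed vertex with $\eps_v=-1$ the form $\gamma_v$ is nondegenerate antisymmetric, so odd (in particular $1$) dimension is impossible; and at a fixed arrow with $\sigma_a=-1$ the self-duality equation in dimension $1$ reads $W_a=-W_a$, forcing $W_a=0$, so your ``finite list of sign constraints never forces $W_a=0$'' is exactly what fails. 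Your suggested repair --- doubling only the obstructing vertex to dimension $2$ --- does not match the lemma's conclusion (the dimension vector there is the constant $1$ or the constant $2$) and breaks your own mechanism: arrows between a dimension-$1$ and a dimension-$2$ vertex are $1\times 2$ or $2\times 1$ matrices, never invertible, so the ``generic scalars, strongly connected, hence simple'' argument no longer applies and no replacement argument is given.

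For comparison, the paper handles this case by taking dimension $2$ at \emph{every} vertex: it chooses a minimal set of arrows $C$ meeting each $\phi$-orbit $\{a,\phi(a)\}$, assigns $\id_2$ to the non-selfdual arrows of $C$, and $\id_2$ resp.\ $\Lambda_2$ to selfdual symmetric resp.\ antisymmetric arrows (the values on $\phi(C)$ then being determined by $W^*=W$). Since all arrows are invertible and $C\cup\phi(C)$ is strongly connected, $W$ is semisimple; the real content is then a stabilizer computation showing $\DGL_W$ sits inside an $\Sp_2$ attached to the symplectic vertex or antisymmetric arrow, so it is either $\GL_1$ (general $\eps$-simple) or $\Sp_2$ (symplectic $\eps$-simple). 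This identification of the type of $W$ via its stabilizer is also needed later (remark after the lemma and step O3A of the algorithm), and it is entirely absent from your proposal; without it, even granting a consistent dimension-$2$ assignment, you have not shown the representation is $\eps$-simple rather than merely semisimple and self-dual.
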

\begin{proof}
First assume that $c \cap c^*$ is empty then all arrows and vertices are of general type.
Consider the representation $W$ that assigns to every arrow the scalar $1 \in \Mat_{1\times 1}(\C)$.
This representation is the direct sum of two simples corresponding to the two cycles. The involution
swaps these two simples so $W$ is a general $\eps$-simple.

Now assume that $c \cap c^*$ is non-empty but that it does not contain 
symplectic vertices or antisymmetric arrows. In that case we can again assign the scalar $1 \in \Mat_{1\times 1}(\C)$.
to every arrow. This representation will be simple because it is strongly connected and hence $W$ is an orthogonal $\eps$-simple.

Finally if $\Q$ contains a symplectic vertex or an antisymmetric arrow,
We chose a minimal set of arrows $C$ such that $C$ meets every $\{a,\phi(a)\}$.
Let $W$ be the representation that assigns the $2\times 2$-identity matrix to every arrow of $C$ that is not 
selfdual. To the selfdual arrows we assign the identity matrix if $a^*=a$ and the standard symplectic if $a^*=-a$.
$W$ is semisimple because all arrows are invertible and $C \cup \phi(C)$ is strongly connected.
The stabilizer $\DGL_W$ of $W$ inside $\DGL_{\gamma}$ contains
\[
\{ g \in \DGL_{\gamma}| \forall v,w \in \Q'_0:g_v=g_w \in O_2 \cap \Sp_2\} \cong \GL_1
\]
but because all arrows are invertible the value of $g$ on a vertex $v$ fixes all other values:
\[
 g_w = W_{p}g_vW_{p}^{-1} \text{ if $p: v \ot w$},
\]
so $\DGL_W \subset \DGL_{\bar \gamma_v}=\Sp_2$ if $v$ is the symplectic vertex or 
$\DGL_W \subset \{ h \in \DGL_{\bar \gamma_{h(a)}}| h\Lambda_2 h^{T} \}=\Sp_2$ if $a^*=-a$. 

This means that there are two possibilities for $\DGL_W$: either it is $\GL_1$ in which case $W$ is a general $\eps$-simple
or it is $\Sp_2$ in which case $W$ is a symplectic $\eps$-simple.
\end{proof}
\begin{remark}\label{note}
Note that in all cases except for the quiver with just one (symplectic) vertex and only antisymmetric loops, the corresponding local quiver setting has
less vertices of dimension $2$ than the original.
\end{remark}

\begin{definition}
Suppose $(\Q,\gamma)$ is a strict supermixed quiver setting such that 
\begin{enumerate}
 \item 
$\forall v \in Q_0: 1\le \bar\gamma_v\le 2$. 
\item
$\bar \gamma_v=2$ if $v$ is a symplectic vertex or $v=h(a),t(a)$ for an antisymmetric arrow $a$.
\item
No cycle has dimension bigger than $1$.
\item $\Rep(\Q,\gamma)$ contains simples.
\end{enumerate}
Such a supermixed quiver settings are called \emph{basic} and they will replace the quiver settings with dimension vector $1$ in the ordinary case.
\end{definition}

\begin{lemma}\label{21}
If $(Q,\gamma)$ is basic then $\DRep(\Q,\gamma)$ contains orthogonal $\eps$-simples if and only if for every pair of vertices $(v,w)$ of
dimension one there is a path $p: v \to w$ that is not antisymmetric $p^*\ne -p$.
\end{lemma}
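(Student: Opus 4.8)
The plan is to analyze the $\eps$-mixed representations of a basic supermixed setting by controlling how the bilinear form behaves on the dimension-one pieces. First I would set up the decomposition: since $(\Q,\gamma)$ is basic, $\Rep(\Q,\gamma)$ contains simples, and by Theorem~\ref{simple} and the classification of $\eps$-simples, any orthogonal $\eps$-simple $W$ with $W=W^*$ must, when restricted to the dimension-one vertices, look like a simple $\C Q$-module supported on a strongly connected subquiver together with the symplectic/antisymmetric dimension-two data. The key point is that for a self-dual simple supported at the dimension-one vertices, the form $\<,\>$ restricted to two vertices $v,w$ of dimension one is essentially a number, and a path $p:v\to w$ evaluated in $W$ relates these numbers via $\<W_p x, y\> = \sigma_a\eps_{\dots}\<x, W_{p^*}y\>$; the obstruction to making $W$ orthogonal (rather than forcing some sign) is precisely whether all connecting paths $p$ satisfy $p^*\ne -p$, i.e. are not ``purely antisymmetric''.

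The forward direction (orthogonal $\eps$-simple exists $\implies$ every pair of dimension-one vertices is joined by a non-antisymmetric path): I would take an orthogonal $\eps$-simple $W$, which in particular is a simple $\C Q$-module, hence its support is strongly connected, so there is \emph{some} path $p:v\to w$ between any two dimension-one vertices in the support. Suppose toward a contradiction that every such path $p$ satisfies $p^*=-p$. Then evaluating the $*$-compatibility of $\<,\>$ on a cycle $c$ through $v$ and $w$ forces an inconsistency: the scalar $\gamma_v$ at an orthogonal dimension-one vertex is $+1$ (since $\eps_v=1$, $\gamma_v=\id_1$), while the composition of path-relations around the cycle picks up an odd number of sign flips, contradicting $\gamma_v=\gamma_v$. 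Concretely, $\<W_c x,y\>=(\text{product of }\sigma_a\eps\text{'s along }c)\<x,W_{c^*}y\>$, and $c^*=-c$ forces this product to be $-1$, which is incompatible with $c$ being a cycle at a vertex where the form is positive (the $\eps$-commuting relation with $\eps_v=+1$). This is where I expect the bookkeeping to be delicate: one must track exactly which arrows contribute signs and use $M3$ to pair up general arrows so that only the self-dual ones matter.

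The reverse direction (connecting non-antisymmetric paths exist $\implies$ orthogonal $\eps$-simple exists): since $\Rep(\Q,\gamma)$ contains simples, pick a simple $\C Q$-module $W_0$; the issue is to choose it (and the form) so that $W_0\cong W_0^*$ as a $\C Q$-module and the resulting self-pairing is $\eps$-commuting rather than $-\eps$-commuting. I would argue that the hypothesis on non-antisymmetric connecting paths lets us normalize the identifications $vW\cong v^*W^*$ so that the pairing comes out symmetric: pick a spanning tree of non-antisymmetric paths rooted at one vertex, use it to transport the ``$+1$'' normalization of $\gamma$ to every dimension-one vertex, and check that the remaining (non-tree) arrows impose no obstruction because of $M3$ and the dimension-two hypothesis (2) in the definition of basic, which quarantines all the antisymmetric arrows and symplectic vertices into dimension-two pieces where $\Sp_2$/$O_2$-type data lives. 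Then Theorem~\ref{symplectic} and the structure theorem for $\eps$-simples guarantee $W$ is genuinely orthogonal $\eps$-simple, not symplectic or general.

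The main obstacle I anticipate is the sign-tracking in both directions: making precise the claim ``$p^*=-p$ for all connecting paths $p$'' is equivalent to an obstruction class in a suitable $\Z/2$ (a kind of $1$-cocycle on the strongly connected support with values in $\{\pm1\}$), and showing that this cocycle vanishes exactly when an orthogonal structure exists. I would phrase it as: define $\tau(p)=(-1)^{(\text{number of antisymmetric arrows and symplectic-vertex passages in }p)}$ or the sign so that $p^*=\tau(p)\,\bar p$ for the reversed path $\bar p$; then $\tau$ descends to a homomorphism on the fundamental-group-like structure of the support, and orthogonality is possible iff $\tau$ is trivial on loops, iff some (equivalently every) connecting path between any two fixed vertices has $\tau=+1$ — which, after using the basic hypotheses to eliminate the dimension-two complications, is exactly the stated condition. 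The rest is routine once this cocycle framework is in place.
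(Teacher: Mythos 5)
Both directions of your proposal have gaps, and the underlying ``cocycle'' picture is not the right invariant. For necessity, the mechanism you propose (a sign inconsistency in the bilinear form around a cycle, ``contradicting $\gamma_v=\gamma_v$'') does not exist: nothing becomes inconsistent when a path satisfies $p^*=-p$. What actually happens is that on any self-dual $W\in\DRep(\Q,\gamma)$ such a path between dimension-one vertices evaluates to zero, since $*$-compatibility gives $\rho_W(p^*)=\rho_W(p)^\natural$, and for a $1\times 1$ block (with $\gamma$ scalar, $\eps=+1$ on dimension-one vertices of a basic setting) the adjoint is the same scalar, forcing $W_p=-W_p=0$. The paper's necessity argument is then about simplicity, not about the form: if \emph{every} path $v\to w$ is antisymmetric, then $\C Q\,vW$ is a proper submodule with $wW\cap\C Q\,vW=0$, so no element of $\DRep(\Q,\gamma)$ is simple. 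Relatedly, your $\Z/2$-character $\tau$ is not well defined and your claimed chain ``orthogonality possible iff $\tau$ trivial on loops iff some (equivalently every) connecting path has $\tau=+1$'' is false: the lemma's condition is existential, and in a basic setting a pair of dimension-one vertices can be joined both by an antisymmetric path (e.g.\ $p=\phi(a)a$ through a symplectic dimension-two vertex, where $p^*=\eps_u p=-p$) and by a non-antisymmetric one; orthogonal $\eps$-simples then still exist even though $\tau$ is path-dependent.

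For sufficiency, your plan starts from an arbitrary simple $\C Q$-module and tries to make it self-dual by transporting normalizations of $\gamma$ along a spanning tree of good paths; but renormalizing $\gamma$ only changes the presentation of the dualizing structure, it does not make a given simple isomorphic to its dual, and a generic simple in $\Rep(\Q,\gamma)$ need not be self-dual at all. The key step you are missing is the production of a \emph{simple} module inside $\DRep(\Q,\gamma)$. The paper does this by a genericity argument within the dualizing space: for each primitive non-antisymmetric path $p$ between dimension-one vertices, the locus $U_p\subset\DRep(\Q,\gamma)$ where $W_p$ is invertible is nonempty open (this is precisely where the hypothesis enters, since for an antisymmetric $p$ one has $W_p\equiv 0$ on $\DRep(\Q,\gamma)$), and for each dimension-two vertex $v$ there is a nonempty open $U_v$ of rank conditions; a representation in the intersection is shown to be simple using the basicness condition that no cycle has dimension greater than one, so every simple summand must be supported at all dimension-one vertices and the semisimplification has a single summand. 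Without an argument of this kind (or some substitute proving simplicity of your normalized representation), your reverse direction does not close, and Theorem~\ref{symplectic} cannot supply it, since it describes symplectic and general $\eps$-simples rather than certifying that a constructed self-dual representation is simple.
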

\begin{proof}
The condition is necessary because if there is no such path from $v$ to $w$ then the $\C Q v W$ is a subrepresentation but $wW \cap \C Q v W=0$
because every path from $v$ to $w$ is antisymmetric and hence evaluates to zero.

Now we prove that the condition is also sufficient.
First let us define some open subset of $\DRep(\Q,\gamma)$.
\begin{itemize}
 \item 
For every primitive path $p$ ($\ne -p^*$) between two vertices of dimension one we can consider the open subset $U_p \subset \DRep(\Q,\gamma)$ of all
representations $W$ such that $W_p$ is invertible. These subsets are nonempty: there is a surjective $S_{\gamma}$-bimodule morphism 
\[
\begin{cases}
\Rep(\Q,\gamma) \to L_1=\C^2: W \mapsto (W_p,W_{p^*})&p\ne p^*\\
\Rep(\Q,\gamma) \to L_2=\C: W \mapsto (W_p)&p=p^* 
\end{cases}
\]
In both cases the dualizing space is nonzero ($D(L_1)=\{(\lambda,\lambda)|\lambda \in \C\}$, $D(L_2)=\C$), so $U_p$ must also be nonzero.
\item 
For every vertex $v$ of dimension $2$ we can consider the open subset $U_v \subset \DRep(\Q,\beta)$ of all
representations $W$ such that both $\oplus_{h(a)=v}W_a$ and $\oplus_{t(a)=v}W_a$ have rank $2$. It is easy to see that $U_v$ is also non-empty.
\end{itemize}

Suppose that $W \in \cap_p U_p \cap_v U_v$ then $W$ is a simple representation.
If $W'\subset W$ is a simple subrepresentation then it cannot be concentrated in one vertex $v$ with $\bar \gamma_v=2$ because $W$ is inside the $U_v$.
It also cannot be concentrated in only vertices with dimension $\alpha_v=2$ because there is no cycle of this kind of vertices.
So its support contains a vertex of dimension $1$ and because $W \in \cap_p U_p$ it is supported in all vertices of dimension one.

If $W$ were not simple its semisimplification $W^{ss}$ would be the sum of simples in $W$, but every simple is supported
in all vertices of dimension one it is not possible that $W^{ss}$ has more than one summand and hence $W$ must be simple.
\end{proof}

\begin{theorem}
Let $(\Q,\gamma)$ be a sincere supermixed quiver setting. 
To check whether $\DRep(\Q,\gamma)$ contains orthogonal simples follow the following algorithm. 
\begin{itemize}
 \item[O0]
$\DRep(\Q,\gamma)$ contains simples if and only if $\DRep(\Q',\alpha')$ does. Here $(Q',\alpha')$ is the
setting obtained by deleting all the antisymmetric arrows of dimension one and all the antisymmetric loops on
symplectic vertices of dimension $2$.
 \item[O1]
If $\Rep(\Q,\gamma)$ does not contain simples 
or there exists a pair of vertices with dimension $1$ such that every path $p:v\to w$ is antisymmetric
then $\DRep(\Q,\gamma)$ does not contain orthogonal $\eps$-simples.
 \item[O2]
If $(\Q,\gamma)$ is basic then we apply lemma \ref{21}.
\item[O3]
If $(\Q,\gamma)$ is not basic then either
\begin{itemize}
\item[A.]
If $(Q,\gamma)$ contains a cycle $c$ with dimension at least $2$,
the setting $(c \cup c^*,1 \text{ or }2)$ has an $\eps$-simple representation of the form in lemma \ref{cykel}.
Consider the local supermixed setting the completed representation $\hat V$. 
$\Rep(\Q,\gamma)$ contains simples if and only if this local setting does.
\item[B.]
If all cycles have dimension one we consider a dimension vector $\beta$ such that $(Q,\beta)$ satisfies the conditions
of lemma \ref{21}, and let $V$ be an orthogonal $\eps$-simple for this setting.
Then $(\Q,\gamma)$ contains orthogonal simples and $(Q,\alpha)$ contains simples if and only if the local setting 
generated by $\hat V$ does.
\end{itemize}
\end{itemize}
\end{theorem}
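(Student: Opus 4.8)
The plan is to verify the reductions O0--O3 one at a time and then prove that the resulting recursion terminates, in complete parallel with the proof of the reformulated algorithm for Theorem~\ref{simple}. \textbf{O0.} The two kinds of arrows that get deleted are inessential for orthogonal simples. An antisymmetric arrow $a=\phi(a)$ with $\bar\gamma_{h(a)}=\bar\gamma_{t(a)}=1$ is identically zero on $\DRep(\Q,\gamma)$: after bringing the setting to strict form the relevant entries of $\gamma$ equal $1$, so the condition $m^*=m$ together with the defining formula for $*$ on $\Rep(\Q,\gamma)$ forces $m_a=-m_a$. A short $2\times 2$ computation with $\Lambda_2$ shows that an antisymmetric loop on a symplectic vertex of dimension $2$ takes only scalar values $cI$ on $\DRep(\Q,\gamma)$. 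In either case, deleting the arrow and restoring it with value $0$ is a bijection between the orthogonal $\eps$-simples of $(\Q,\gamma)$ and of $(Q',\alpha')$, since a prescribed scalar value on a loop changes neither simplicity of the underlying module nor the isomorphism class of the bilinear form. This gives O0.

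\textbf{O1 and O2.} Step O1 packages two necessity statements. First, by the classification of $\eps$-simple modules an orthogonal $\eps$-simple is a simple $\C Q$-module, so if $\Rep(\Q,\gamma)$ has no simples it has no orthogonal $\eps$-simples. Second, if $v,w$ are dimension-one vertices with every path $v\to w$ antisymmetric, then for any $W\in\DRep(\Q,\gamma)$ all those paths evaluate to zero, so $\C Q\cdot vW$ is a proper submodule of $W$ that misses $wW$ and $W$ cannot be simple; this is exactly the necessity half already established inside Lemma~\ref{21}. Step O2 is nothing but Lemma~\ref{21} applied to the basic setting.

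\textbf{O3 and termination.} In case O3.A I take the sub-supermixed quiver $c\cup c^*$ generated by the arrows of the cycle $c$ and their $\phi$-images; it satisfies the hypotheses of Lemma~\ref{cykel}, which yields an $\eps$-simple representation $V$ of dimension $1$ or $2$, and I complete it to $\hat V=V\oplus\bigoplus_{v}T_v^{\oplus(\bar\gamma_v-\alpha^V_v)}$, a semisimple $\eps$-mixed representation of $(\Q,\gamma)$. By Lemma~\ref{localmixedsimple}, together with the fact that the \'etale isomorphism of Theorem~\ref{epslocal} preserves closedness of orbits and the dimension of stabilizers, and hence the orthogonal/symplectic/general type of a simple, $\DRep(\Q,\gamma)$ contains orthogonal simples if and only if its local mixed setting at $\hat V$ does. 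In case O3.B, having passed O1 we know $\Rep(\Q,\gamma)$ has simples and that the path condition of Lemma~\ref{21} holds; I pick the dimension vector $\beta$ equal to $2$ on symplectic vertices and on endpoints of antisymmetric arrows and $1$ elsewhere (permissible because no cycle has dimension bigger than $1$), invoke Lemma~\ref{21} to produce an orthogonal $\eps$-simple $V$ for $(\Q,\beta)$, complete it to $\hat V$ as above, and apply Lemma~\ref{localmixedsimple} again. Termination then follows from Remark~\ref{note}: an O3.A step strictly decreases the number of dimension-$2$ vertices (the one exceptional setting having already been removed by O0), while an O3.B step strictly decreases the total dimension $\sum_v\bar\gamma_v$; hence the pair (number of dimension-$2$ vertices, total dimension) decreases lexicographically and the algorithm stops.

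\textbf{Expected main obstacle.} The subtle point is O3: one must be sure that the local mixed quiver of the completed representation detects precisely the \emph{orthogonal} simples rather than merely the existence of some simple, which is why the type-preserving features of the slice isomorphism (Theorems~\ref{epslocal} and~\ref{symplectic}, via stabilizer dimensions) are essential; the accompanying bookkeeping needed to verify that the chosen measure genuinely strictly decreases at \emph{every} branch, including the boundary situation flagged in Remark~\ref{note}, is the other place where care is needed.
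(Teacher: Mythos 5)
Your treatment of O0, O1, O2 and of the correctness of the O3 reductions follows the paper's route (the $2\times 2$ computation with $\Lambda_2$, the first paragraph of Lemma~\ref{21}, Lemma~\ref{localmixedsimple} plus the type-preservation of the slice isomorphism), so up to that point the proposal and the paper agree. The genuine gap is in your termination argument. You claim that an O3.A step strictly decreases the number of dimension-$2$ vertices, citing Remark~\ref{note}; but that remark only concerns the local setting of the small cycle setting $(c\cup c^*,1\text{ or }2)$, not of the full setting $(\Q,\gamma)$. In the full setting an O3.A step can \emph{increase} the number of dimension-$2$ vertices while leaving the total dimension unchanged: take a single orthogonal vertex of dimension $3$ with two symmetric loops; splitting off a one-dimensional simple along a loop gives a local setting with one vertex of dimension $2$ and one of dimension $1$, so your first coordinate jumps from $0$ to $1$ and your second coordinate stays at $3$. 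Hence the lexicographic pair (number of dimension-$2$ vertices, total dimension) is not a decreasing measure and your recursion is not shown to terminate.

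The paper's argument is different at exactly this point: it observes that an O3.A step lowers the dimensions along the higher-dimensional cycles (the dimension vector at each old vertex of the cycle drops by $\alpha^V_v\ge 1$, while the newly created vertex has dimension $1$ or is a symplectic vertex of dimension $2$ whose potentially troublesome antisymmetric loops are removed by O0 --- this is where Remark~\ref{note} and the O0 hypothesis are actually used). So after finitely many O3.A steps no cycle of dimension at least $2$ remains, and only then does one invoke a dimension count: each subsequent O3.B step strictly decreases $\sum_v\bar\gamma_v$, because the basic subdimension vector $\beta$ satisfies $\sum_v\beta_v\ge 2$ whenever O1 has not already disposed of the setting. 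To repair your proof you should replace your measure by this two-stage one (first eliminate higher-dimensional cycles, then count total dimension). A secondary point, though no worse than the paper's own brevity: in O3.B you still owe an argument that your explicit $\beta$ (value $2$ on symplectic vertices and endpoints of antisymmetric arrows, $1$ elsewhere) really yields a \emph{basic} setting, in particular that $\Rep(\Q,\beta)$ contains simples and that no cycle acquires dimension $2$ under $\beta$; the paper asserts the existence of such a basic subsetting without verification, and your version needs the same justification.
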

\begin{proof}
 Step $O0$ works because antisymmetric arrows of dimension one are zero and symmetric loops on $2$-dimensional symplectic vertices
are scalar matrices.
 Step $O1$ follows the line of the first paragraph in the proof of lemma \ref{21}.
 So now we have to prove that after a finite number of O3-steps we end in a basic setting or a non-simple one.
 Step $O3A$ reduces the dimensions of the higher dimensional cycles so after a finite steps there will be no higher-dimensional cycles (see remark \ref{note} and observe that by O0 there are no antisymmetric loops on symplectic vertices with dimension $2$). 
 Every simple setting $(Q,\alpha)$ that has passed step $O0$ will contain a basic subsetting the sum of the dimensions in local subsetting will be strictly lower than the sum $\sum_{v \in Q_0} \alpha_v$, so the algorithm will end after a finite number of $O3B$-steps.
\end{proof}

We will now give an easy application of this algorithm to the easiest quiver settings: the ones with one vertex.
\begin{theorem}
The dualizing space of a supermixedquiver setting with one vertex, $k$ symmetric loops and $l$ antisymmetric loops
contains orthogonal simples if and only if
\begin{enumerate}
 \item the vertex $v$ is orthogonal with dimension $n$ and
\begin{enumerate}
\item if $n=1$
\item if $n=2$, $k+l\ge 2$ and $k\ge 1$,
\item if $n\ge 3$ and $k+l\ge 2$.
\end{enumerate}
 \item the vertex $v$ is symplectic with dimension $n\in 2\N$ and
\begin{enumerate}
\item if $n=2$ and $k\ge 2$
\item if $n=4,6$, $k+l\ge 2$ and $k\ge 1$ or $l \ge 3$
\item if $n\ge 8$ and $k+l\ge 2$.
\end{enumerate}
\end{enumerate}
\end{theorem}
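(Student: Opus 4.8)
The plan is to recognise orthogonal $\eps$-simples of a one-vertex setting as irreducible tuples of matrices and then to run the algorithm $O0$--$O3$, filling in a few direct generation arguments at the base of the recursion. For a one-vertex setting $\DRep(\Q,\gamma)$ contains an orthogonal $\eps$-simple exactly when it contains a point $W$ that is a simple $\C Q$-module, since such a $W$ carries the ambient form $\gamma_v$ and is then automatically an $\eps$-simple of type~(1). The spaces in which the loops live are: for an orthogonal vertex of dimension $n$ (so $\DGL_\gamma=O_n$, $\gamma_v=\id$) a symmetric loop contributes a symmetric $n\times n$ matrix and an antisymmetric loop a skew one; for a symplectic vertex of dimension $n$ (so $\DGL_\gamma=Sp_n$, $\gamma_v=\Lambda_n$) a symmetric loop contributes an element of $\mathfrak{sp}_n=\{X:\Lambda_nX^\top\Lambda_n=X\}$ and an antisymmetric loop an element of the symmetric-pair complement $\mathfrak{p}_n=\{X:\Lambda_nX^\top\Lambda_n=-X\}$, with $\Mat_n=\mathfrak{sp}_n\oplus\mathfrak{p}_n$. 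In every case the question becomes: for which $(n,k,l)$ does a generic tuple of $k$ matrices of the first kind and $l$ of the second kind generate $\Mat_n$, equivalently act irreducibly on $\C^n$?

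The orthogonal vertex is routine. For $n=1$ every one-dimensional module is simple. For $n\ge2$ a single (symmetric or skew) matrix is semisimple, hence reducible, so $k+l\le1$ fails; for $k+l\ge2$ a generic tuple works by classical facts: two generic symmetric matrices, or a generic diagonal matrix together with a generic skew matrix, generate $\Mat_n$ by a Vandermonde argument (this covers all cases with $k\ge1$), and for $k=0$, $l\ge2$ two generic skew matrices generate $\mathfrak{so}_n$ as a Lie algebra, and $\C^n$ is the irreducible vector representation of $\mathfrak{so}_n$ for $n\ge3$, so they generate $\Mat_n$. The only extra obstruction is $n=2$, where the skew matrices form a one-dimensional space and $k=0$ cannot work; this reproduces part~(1). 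The same conclusion follows by applying $O0$--$O3$ together with Lemma~\ref{21}, which is closer to the spirit of the paper but numerically identical.

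For the symplectic vertex the easy parts are: $O0$ deletes antisymmetric loops on a $2$-dimensional symplectic vertex because $\mathfrak{p}_2$ consists only of scalar matrices; two generic elements of $\mathfrak{sp}_n$ generate the simple Lie algebra $\mathfrak{sp}_n$ and hence $\Mat_n$, which settles all cases with $k\ge2$ (in particular $n=2$ with $k\ge2$), while for $n=2$ with $k\le1$ one is left with at most one traceless matrix, which is reducible. For the remaining sincere cases one uses the local-mixed-quiver reduction of Lemma~\ref{localmixedsimple}: by $O3A$ and Lemma~\ref{cykel} one peels off a $2$-dimensional symplectic vertex, lowering the number of $2$-dimensional vertices (Remark~\ref{note}) until one reaches a setting whose simples are already understood, a dimension count of $\dim\iss$ against $\dim\Diss$ (as in the preceding theorems) controlling which settings survive.

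The main obstacle is the exceptional low-dimensional symplectic cases $n=4$ and $n=6$ with $k=0$, where a generic $l$-tuple need not generate $\Mat_n$. The crucial structural fact is that $\mathfrak{p}_n$ is not merely a subspace but a unital Jordan subalgebra of $\Mat_n$: the anticommutator of two elements of $\mathfrak{p}_n$ again lies in $\mathfrak{p}_n$. For $n=4$ this Jordan algebra is the spin factor $\C\cdot\id\oplus W$ with $\dim W=5$, in which every element of $W$ squares to a scalar, so a generic $l$-tuple out of $\mathfrak{p}_4$ generates only a quotient of a Clifford algebra $\mathsf{Cl}_l(\C)$, and one must decide for which $l$ such a quotient can act irreducibly on the spinor module $\C^4$. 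A parallel — but considerably bulkier — analysis of the Jordan algebra $\mathfrak{p}_6$ isolates the threshold there, and one checks separately that for $n\ge8$ the Jordan algebra $\mathfrak{p}_n$ is already large enough that two generic elements suffice. Matching these Clifford/Jordan thresholds against the stated bounds, and keeping the bookkeeping of the local-quiver recursion consistent through several steps, is the delicate point; the rest is standard generation of matrix algebras together with repeated use of the algorithm.
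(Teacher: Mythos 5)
The orthogonal half of your argument, and the symplectic cases $k\ge 2$ and $n=2$, are correct and in fact cleaner than the paper's route: you settle them by direct generation of $\Mat_n$ (Vandermonde/strong-connectivity for $k\ge1$, Lie-algebra generation plus Burnside for $k=0$, $n\ge3$), whereas the paper runs its algorithm O0--O3, repeatedly splitting off cycle representations as in Lemma~\ref{cykel} and computing local mixed quiver settings via Lemma~\ref{localmixedsimple} until it reaches a basic setting or, in the symplectic $k=0$ case, a single $2$-dimensional symplectic vertex whose symmetric-loop count is read off from the local-quiver arrow formula. One small slip: a complex symmetric matrix need not be semisimple, but a single matrix never acts irreducibly on $\C^n$ for $n\ge2$, so your conclusion stands.

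The genuine gap is that the symplectic cases with $k\le 1$ --- exactly the cases responsible for the theorem's exceptional thresholds ($n=4,6$ requiring $k\ge1$ or $l\ge3$, versus $n\ge8$ unrestricted) --- are never actually proved. You correctly identify the obstruction (every $X\in\mathfrak{p}_{2m}$ has characteristic polynomial a perfect square, and $\mathfrak{p}_n$ is a Jordan subalgebra, a spin factor for $n=4$), but then every decisive computation is deferred: ``one must decide for which $l$ such a quotient can act irreducibly,'' the $n=6$ analysis is declared ``bulkier'' and omitted, and the $n\ge8$ claim that two generic elements of $\mathfrak{p}_n$ suffice is asserted without argument. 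Worse, the one computation you do set up refutes the plan of ``matching the thresholds against the stated bounds'': since $\mathfrak{p}_4\cap\mathfrak{sl}_4$ is a $5$-dimensional space on which $X^2=\tfrac14\Tr(X^2)\,\id$, the unital algebra generated by any $Y_1,\dots,Y_l\in\mathfrak{p}_4$ is a quotient of $\mathrm{Cl}(U)$ with $\dim U\le l$, and $\mathrm{Cl}_3\cong\Mat_2\oplus\Mat_2$ has no $4$-dimensional simple module; so by Burnside one needs $\dim U\ge4$, i.e.\ $l\ge4$ when $k=0$, $n=4$ --- not the $l\ge3$ in the statement. (The paper's own recursion is also strained here: its terminal setting for $n=4$, $k=0$, $l=3$ has exactly one symmetric loop on a $2$-dimensional symplectic vertex, which by case 2(a) should fail.) So the deferred step is not bookkeeping: it is where the content of parts 2(b)--(c) lives, and as sketched it cannot be completed so as to yield the stated bounds without first resolving this discrepancy. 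You would also need to treat $k=1$, $n\ge4$ explicitly (a regular element of $\mathfrak{sp}_n$ plus a generic element of $\mathfrak{p}_n$, noting that the entries of $Y\in\mathfrak{p}_n$ pairing a basis vector with its symplectic partner vanish, so strong connectivity of the support digraph must be checked rather than assumed).
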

\begin{proof}
First note that if $n>1$ then $k+l\ge2$ because otherwise the underlying quiver setting does not contain simple representations
in $\Rep(Q,\alpha)$ by theorem \ref{simple}. If $n=1$ then every representation is simple and hence every representation in $\DRep(\Q,\gamma)$ is
orthogonal $\eps$-simple.

Now let us first concentrate on the orthogonal case. If we start applying the algorithm the step O3A will do the following
\begin{itemize}
\item If $k\ge 1$ we take $c$ to be a symmetric loop.
\[
\vcenter{
\xymatrix{\vtx{n}\ar@(l,u)^k\ar@{.>}@(u,r)^l}}
\to
\vcenter{
\xymatrix{\vtx{n-1}\ar@(l,u)^k\ar@{.>}@(u,r)^l\ar@/^/[d]^s\\
\vtx{1}\ar@(l,d)_k\ar@{.>}@(d,r)_l\ar@/^/[u]^s
}}
\to
\vcenter{
\xymatrix{\vtx{n-2}\ar@(l,u)^k\ar@{.>}@(u,r)^l\ar@/^/[d]^s\ar@/^25pt/[dd]^s\\
\vtx{1}\ar@(lu,ld)^k\ar@{.>}@(rd,ru)^l\ar@/^/[u]^s\ar@/^/[d]^s\\
\vtx{1}\ar@(l,d)_k\ar@{.>}@(d,r)_l\ar@/^/[u]^s\ar@/^25pt/[uu]^s
}}
\to \dots
\]
here $s=k+l-1$. So after $n$ steps we get a quiver with only orthogonal vertices of dimension $1$ and $s$ arrows between every pair
of vertices. This setting is basic and hence contains $\eps$-simples.
\item If $k=0$ we must take $c$ to be an antisymmetric loop.
\[
\vcenter{
\xymatrix{\vtx{n}\ar@{.>}@(lu,ru)^l}}
\to
\vcenter{
\xymatrix{&\vtx{n-2}\ar@{.>}@(lu,ru)^l\ar@/^/[ld]^s\ar@/^/[rd]^s&\\
\vtx{1}\ar@{.>}@/^/[rr]^{s}\ar@{->}@(ld,rd)_l\ar@/^/[ur]^s&&\vtx{1}\ar@{->}@(ld,rd)_l\ar@/^/[ul]^s\ar@{.>}@/^/[ll]^{s}
}}
\to \dots
\]
So after $\frac n2$ steps we get a quiver with only orthogonal vertices of dimension $1$ and $l-1$ arrows between every pair
of vertices. The arrows between a vertex and its dual are antisymmetric and can be deleted. Therefore the setting is
only basic if $n>2$.
\end{itemize}

For the symplectic case we can do the same thing but again we have to differentiate between $k=0$ and $k\ge 1$.
\begin{itemize}
 \item 
If $k\ge 1$ the representation we split off corresponding to this symmetric loop in $O3A$ is a general $\eps$-simple. This is the same as for the orthogonal case with $k=0$,
but now there is at least one symmetric arrow between two dual vertices and hence the setting is always basic.
\item
If $k=0$ then $l\ge 2$ and the representation we split of first in $O3A$ is a symplectic $\eps$-simple. Then we can split of 
an $\eps$-simple with corresponding to a cycle that runs through the two cycles (note that such a cycle exists because $k+l-1=l-1\ge 1$).
We can continue this way until the original vertex has dimension $0$. Then we end up with a quiver with one symplectic vertex 
of dimension $2$ and $l((\frac n2)^2-1)+1$ loops of which $\frac{l((\frac n2)^2-1)-(l+1)\frac n2+1}2$ are symmetric. This number can be obtained by using the induction procedure
or more directly by observing that the last quiver setting we obtain corresponds to the local quiver setting of a symplectic simple representation
of the original setting. If the last setting contains an orthogonal representation then $\frac{l((\frac n2)^2-1)-(l+1)\frac n2+1}2$ must be bigger than $1$.
This holds if $l\ge 3$ or if $n\ge 8$.
\end{itemize}

\end{proof}

\bibliographystyle{amsplain}
\bibliography{involution}

\end{document}